\newtheorem{theorem}{Theorem}[section]
\newtheorem{proposition}[theorem]{Proposition}
\newtheorem{lemma}[theorem]{Lemma}
\newtheorem{corollary}[theorem]{Corollary}
\theoremstyle{definition}
\newtheorem{definition}[theorem]{Definition}
\theoremstyle{remark}
\newtheorem{remark}[theorem]{Remark}
\newtheorem{example}[theorem]{Example}
\numberwithin{equation}{section}
\begin{document}

\title[Singular Patterns of Generic Maps]{Singular Patterns of Generic Maps \\
of Surfaces with Boundary into the Plane}

\author{Dominik J. Wrazidlo}
\address{Institute of Mathematics for Industry, Kyushu University, Motooka 744, Nishi-ku, Fukuoka 819-0395, Japan}
\email{d-wrazidlo@imi.kyushu-u.ac.jp}

\date{\today}

\keywords{Generic smooth map, Elimination of cusps, swallowtail, winding number, Euler characteristic, pseudo-immersion}

\begin{abstract}
For generic maps from compact surfaces with boundary into the plane we develop an explicit algorithm for minimizing both the number of cusps and the number of components of the singular locus.
More precisely, we minimize among maps with fixed boundary conditions and prescribed \emph{singular pattern}, by which we mean the combinatorial information of how the $1$-dimensional singular locus meets the boundary.
Each step of our algorithm modifies the given map only locally by either creating or eliminating a pair of cusps.
We show that the number of cusps is an invariant modulo $2$ and can be reduced to at most one, and we compute the minimal number of components of the singular locus in terms of the prescribed data.
Applications include a discussion of pseudo-immersions as well as the computation of state sums in Banagl's positive topological field theory.
\end{abstract}

\maketitle

\section{Introduction}

By a classical result of Whitney \cite{whi}, any smooth map $\mathbb{R}^{2} \rightarrow \mathbb{R}^{2}$ (``smooth'' always means differentiable of class $C^{\infty}$) can be approximated arbitrarily well by a smooth map $F \colon \mathbb{R}^{2} \rightarrow \mathbb{R}^{2}$ such that for every point $p \in \mathbb{R}^{2}$, the map germ of $F$ at $p$ is smoothly right-left equivalent to one of the following map germs $(\mathbb{R}^{2}, 0) \rightarrow (\mathbb{R}^{2}, 0)$:
\begin{align*}
(x, y) \mapsto \begin{cases}
(x, y), \quad &\text{$p$ is a regular point of } F, \\
(x, y^{2}), \quad &\text{$p$ is a fold point of } F, \\
(x, xy + y^{3}), \quad &\text{$p$ is a cusp of } F.
\end{cases}
\end{align*}
In general, a smooth map $F \colon W \rightarrow V$ between smooth $2$-manifolds $W$ and $V$ is called \emph{generic} if its singular locus
$$S(F) = \{p \in W; \; \text{the differential } dF_{p} \colon T_{p}W \rightarrow T_{F(p)}V \text{ has rank } < 2\}$$
consists only of fold points and cusps.
If $F \colon W \rightarrow V$ is generic, then $S(F) \subset W$ is a $1$-dimensional submanifold, and the cusps of $F$ form a discrete subset of $W$.
Moreover, a generic map $F \colon W \rightarrow V$ is called fold map if $S(F)$ contains no cusps.
Note that any fold map $F \colon W \rightarrow V$ restricts to an immersion $S(F) \rightarrow V$.

The problem of classifying generic maps between surfaces in terms of suitable notions of complexity related to their singular locus has attracted the attention of several authors; see e.g. \cite{kal, myam} for counting cusps and components of the singular locus, \cite{tyama} for the study of apparent contours of stable maps (compare \Cref{remark apparent contour}), and \cite{hac} for the combinatorics of certain graphs associated to stable maps.

Our paper contributes to this field of study by determining the possible number of cusps and of singular components (i.e., components of the singular locus) for generic maps of compact surfaces with boundary into the plane.
For the generic maps under consideration, we impose reasonable boundary conditions, and also fix their \emph{singular pattern} (see \Cref{pattern}), namely the combinatorial information of how the $1$-dimensional singular locus runs into the boundary.

Initial motivation for our setting comes from the requirement that the number of singular components of a generic map should be detectable in a way that behaves nicely under gluing of cobordisms -- a property that is desirable from the viewpoint of topological field theory (TFT).
In fact, suppose that a $2$-dimensional cobordism $W$ is the result of gluing a cobordism $W_{1}$ from $M$ to $N$ and a cobordism $W_{2}$ from $N$ to $P$ along their common boundary part $N$, i.e., $W = W_{1} \cup_{N} W_{2}$.
Consider a generic map $F \colon W \rightarrow \mathbb{R}^{2}$ whose singular locus $S(F)$ is transverse to $M$, $N$ and $P$.
Then, the number of singular components of $F$ cannot directly be computed from the number of singular components of $F|_{W_{1}}$ and $F|_{W_{2}}$.
The remedy is to record also the additional information that is provided by singular patterns.
It is exactly this gluing principle that has been exploited by Banagl \cite[Section 10]{ban} in order to construct an explicit example of a so-called positive TFT which is based on certain fold maps of cobordisms into the plane (see \Cref{remark positive TFT}).
The spirit of TFT is also visible in the proof of \Cref{proposition boundary turning invariant and Euler characteristic}, where a relative version of the winding number for arcs in the plane is used.

Let us introduce some terminology to prepare the discussion of our main results.
Suppose that $W$ is a connected compact smooth $2$-manifold with boundary $P = \partial W$, and fix a collar $[0, \infty) \times P \subset W$.
A \emph{boundary condition} is a fold map $f \colon (-\varepsilon, \varepsilon) \times P \rightarrow \mathbb{R}^{2}$ whose singular locus $S(f)$ is transverse to $\{0\} \times P$.
By a \emph{(singular) pattern} (see \Cref{pattern}) on $W$ we mean a pair $(f, \varphi)$ consisting of a boundary condition $f$, and a partition $\varphi$ of the finite set $P_{f} := (\{0\} \times P) \cap S(f)$ into subsets of cardinality $2$.
Finally, a \emph{realization} of a pattern $(f, \varphi)$ is a generic extension $F \colon W \rightarrow \mathbb{R}^{2}$ of the germ of $f|_{[0, \varepsilon) \times P}$ at $\{0\} \times P$ (such $F$ do always exist) such that a subset $A \subset P_{f}$ belongs to the partition $\varphi$ if and only if $A$ is the boundary of a component of $S(F)$.
In \Cref{proposition existence of realizations} we will provide a combinatorial criterion for deciding whether a realization of a given pattern exists.

The proofs of our main results will provide an explicit algorithm for minimizing the number of cusps (\Cref{MAIN THEOREM 1}) and the number of components (\Cref{MAIN THEOREM 2}) of realizations of a given pattern on $W$.
Our algorithm relies merely on two local modifications of generic maps which can be realized by homotopies supported in $W \setminus \partial W$.
Namely, we use Levine's homotopy \cite{lev} for eliminating pairs of cusps of generic maps between surfaces, as well as the complementary process of creating a pair of cusps on a fold line by means of the swallow-tail homotopy.
These two modifications, say (E) and (C), serve as elementary building blocks for more complicated moves (see \Cref{local moves}) that will be used to build up our algorithm.
Note that the moves (E) and (C) correspond to two out of $10$ types of codimension one local strata of the discriminant hypersurface $\Gamma \subset C^{\infty}(W, \mathbb{R}^{2})$ consisting of smooth maps $W \rightarrow \mathbb{R}^{2}$ that have non-generic apparent contours \cite{aic}.

\begin{theorem}\label{MAIN THEOREM 1}
Let $(f, \varphi)$ be a pattern on $W$.
\begin{enumerate}[(a)]
\item All realizations of $(f, \varphi)$ have the same number of cusps modulo $2$.
\item Every realization of $(f, \varphi)$ can be modified on $W \setminus \partial W$ by a finite sequence of (E) and (C) moves to obtain a realization with at most one cusp.
\end{enumerate}
\end{theorem}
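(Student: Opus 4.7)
My plan is to prove (a) via a homotopy/transversality argument in the mapping space, and (b) via an explicit inductive reduction algorithm.

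For (a), two realizations $F_{0}, F_{1}$ of $(f,\varphi)$ agree as germs along $\partial W$, so they lie in the same path-component of the affine (hence convex) space of smooth extensions of the boundary germ. Choose a smooth path $\{F_{t}\}_{t \in [0,1]}$ in this space and perturb it, fixing the endpoints, so that it meets the discriminant $\Gamma \subset C^{\infty}(W,\mathbb{R}^{2})$ transversely. Along the perturbed path, only finitely many parameter values correspond to codimension-one events, each of one of the 10 local types classified in \cite{aic}. Only the swallowtail, lips, and beaks transitions change the total number of cusps, and each of them does so by exactly $\pm 2$. Hence the parity of the cusp count is invariant along the path, which proves (a).

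For (b), I would reduce the cusp count by $2$ whenever it is at least $2$, iterating until at most one cusp remains. Given two cusps $p_{1}, p_{2}$ of a realization $F$, join them by a generic arc $\gamma \subset W \setminus \partial W$ meeting $S(F)$ transversely away from the endpoints. Levine's (E) move cancels $p_{1}$ and $p_{2}$ along $\gamma$ provided a parity/orientation condition on how $\gamma$ crosses $S(F)$ is satisfied. If the condition already holds, apply (E) directly. Otherwise, use (C) to birth a pair of auxiliary cusps $q_{1}, q_{2}$ on a fold arc crossed by $\gamma$, chosen so that eliminating $(p_{1}, q_{1})$ via (E) becomes admissible; then eliminate $(p_{2}, q_{2})$ via a second (E). The net effect is $+2 - 4 = -2$ cusps, and iterating reduces the count to its parity.

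The main difficulty is to show that this (C)-then-(E)-then-(E) procedure is always admissible, no matter how $p_{1}$ and $p_{2}$ are distributed across the components of $S(F)$ and regardless of the local Levine obstruction. Handling this requires packaging (E) and (C) into the composite operations developed in \Cref{local moves}, which transport a cusp along a fold arc, swap two adjacent cusps, and (via the reconnections produced by (E) when its two input cusps lie on distinct components of $S(F)$) transfer a cusp from one singular component to another; the argument then proceeds by a case analysis on the local and global configuration of $p_{1}$, $p_{2}$, and $\gamma$ relative to $S(F)$. Combined with (a), this shows that the minimum cusp count among realizations of $(f,\varphi)$ is $0$ or $1$, determined by the common parity.
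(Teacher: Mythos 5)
For part (a), your discriminant-crossing argument is sound and genuinely different from the paper's. The linear interpolation between two extensions of the boundary germ stays in the affine space of such extensions, a relative transversality argument pushes the path off all strata of codimension $\geq 2$, and among the ten codimension-one transitions only lips, beaks and swallowtail touch the cusp count, each by $\pm 2$; this proves parity invariance for arbitrary extensions of $f$, hence for realizations. The paper instead proves the sharper \Cref{proposition parity of number of cusps of realizations}, identifying the parity with the explicit quantity $\Gamma^{\sigma}=\chi(W)+|P_{f}|/2+\omega_{\sigma}(f)$ via Haefliger's theorem and winding numbers of boundary immersions. Your argument is softer and avoids the boundary turning invariant entirely, but it yields only invariance, not the value; the explicit formula is what feeds into $\Delta^{\sigma}$ and \Cref{MAIN THEOREM 2}, so the paper's route is not optional in context.

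For part (b) there is a genuine gap, and it is not the one you flag. On a connected surface any two cusps can be joined by an embedded arc carrying the correct downward directions at its endpoints and meeting $S(F)$ transversely in fold points away from them, and \Cref{lemma generalized cusp elimination} (K\'{a}lm\'{a}n's lemma) then always cancels the pair; in dimension two every such pair is matching in Levine's sense, so there is no residual admissibility obstruction for your ``(C)-then-(E)-then-(E)'' step. What your reduction does not control is that every (E) move -- including each intermediate step of the general elimination -- reconnects the singular locus. After cancelling an arbitrary pair of cusps, the components of $S(\widetilde{F})$ meeting $\partial W$ may induce a partition of $P_{f}$ different from $\varphi$, so the output is a generic extension of $f$ with fewer cusps but not necessarily a realization of $(f,\varphi)$. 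This pattern-preservation constraint is the entire difficulty of (b), and the paper's proof is organized around it: it first moves every cusp onto a loop (so that the interval components $T$ of $S(F)$, which encode $\varphi$, are never touched by subsequent moves), reduces to at most one cusp per component of $W\setminus T$, uses tunneling to gather the cusp-carrying loops against a fixed interval component $J$, cancels in pairs the cusps whose access paths reach $J$ from opposite sides, and finally eliminates the remainder inside a single component of $W\setminus T$. None of these steps, nor the need for them, appears in your sketch; the ``case analysis'' you defer to is where essentially all of the content of part (b) lives.
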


\newpage
In particular, part (a) yields a $\mathbb{Z}/2$-valued invariant of certain patterns on $W$.
This invariant turns out to be independent of $\varphi$, and can be considered as a relative Thom polynomial for cusps of generic maps of surfaces into the plane.
In \Cref{proposition parity of number of cusps of realizations} we provide a formula for computing this invariant in terms of $f$.
It involves the \emph{boundary turning invariant} of $f$, which will be defined in \Cref{Boundary Turning Invariant} by means of winding numbers of certain immersions of the circle into the plane.
Our formula resembles the formula of Fukuda-Ishikawa \cite{fukishi} for maps between compact surfaces with boundary (see \Cref{remark Fukuda-Ishikawa theorem}).
In the context of \Cref{proposition parity of number of cusps of realizations} we define an integer $\Gamma^{\sigma}$ that depends on an orientation $\sigma$ of $P$.
We will use $\Gamma^{\sigma}$ in the definition of another integer $\Delta^{\sigma}$ (see \Cref{Realizations and Number of Loops}) that will appear in our second main result concerning the minimal number of loops (i.e., singular components diffeomorphic to the circle) that can occur for realizations of a given pattern on $W$.

\begin{theorem}\label{MAIN THEOREM 2}
Let $(f, \varphi)$ be a pattern on $W$ such that $P_{f} \neq \emptyset$.
Suppose that $F \colon W \rightarrow \mathbb{R}^{2}$ is a realization of $(f, \varphi)$ without cusps.
\begin{enumerate}[(a)]
\item Let $W$ be non-orientable.
Then, given an integer $l \geq 0$, $F$ can be modified on $W \setminus \partial W$ by a finite sequence of (E) and (C) moves to a realization of $(f, \varphi)$ which has no cusps and $l$ loops.
\item Let $W$ be orientable, and let $\sigma$ denote an orientation of $P = \partial W$ which is induced by one of the two orientations of $W$.
Then, the following statements are equivalent for any integer $l$:
\begin{enumerate}[(i)]
\item $F$ can be modified on $W \setminus \partial W$ by a finite sequence of (E) and (C) moves to a realization of $(f, \varphi)$ which has no cusps and $l$ loops.
\item $l \in \mathbb{N} \cap (\Delta^{\sigma} + 2 \mathbb{N}) \cap (\Delta^{-\sigma} + 2 \mathbb{N})$. 
\end{enumerate}
\end{enumerate}
\end{theorem}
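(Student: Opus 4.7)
The plan is to construct, from pairs of (C) and (E) moves, an elementary local modification of $F$ that changes the number of loop components of $S(F)$ while keeping the map cusp-free and the pattern $(f,\varphi)$ unchanged. Such an \emph{elementary loop move} is built by first applying (C) to a fold arc to produce two cusps $c_{1}, c_{2}$, then sliding $c_{1}$ along the resulting fold line in $W \setminus \partial W$ by an isotopy, and finally canceling $c_{1}$ with $c_{2}$ via (E). The topological effect on $S(F)$ is determined by the homotopy class of the path traversed and, in the orientable setting, by whether this path preserves or reverses the induced framing of the fold line. The hypothesis $P_{f} \neq \emptyset$ supplies, through $\varphi$, an arc component of $S(F)$ meeting $\partial W$, giving a distinguished locus on which to seed these moves without disturbing the boundary pattern.

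For part (a), when $W$ is non-orientable, I would choose the slide path to run along an orientation-reversing loop $\gamma \subset W \setminus \partial W$ that is transverse to $S(F)$. The orientation reversal alters the reconnection at the (E) cancellation, and the net effect is to pinch off a single new loop from the underlying fold arc; running the move in reverse absorbs an existing loop back into a fold arc. A straightforward induction on $l$ and on the current number of loops of $F$ then produces any prescribed value $l \geq 0$.

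For part (b), the implication (i) $\Rightarrow$ (ii) is obtained by showing that both $l - \Delta^{\sigma} \pmod 2$ and $l - \Delta^{-\sigma} \pmod 2$ are invariants of the (E)- and (C)-orbit of $F$: by orientability, any elementary loop move must change $l$ by an even integer, and a local check using the definition of $\Delta^{\sigma}$ from \Cref{Realizations and Number of Loops} shows that $\Delta^{\sigma}$ shifts by the same amount modulo $2$. Non-negativity of $l$ is automatic. For the converse, I would construct an algorithm based on two versions of the elementary loop move, corresponding to slide paths that leave $S(F)$ into the two distinct sides of an orientable collar of a fixed arc component; these two versions shift $l$ by $\pm 2$ and interact with $\Delta^{\pm \sigma}$ in complementary ways, so that the combined range of accessible $l$ equals the intersection prescribed in (ii).

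The principal obstacle is the sufficiency in part (b), where one must verify that the two families of elementary loop moves jointly exhaust the parity classes of $\Delta^{\sigma}$ and $\Delta^{-\sigma}$ and produce no further obstruction. This amounts to a careful bookkeeping of loops according to the side on which they bound their adjacent arc component, which I expect to handle using the winding-number arguments underpinning \Cref{proposition boundary turning invariant and Euler characteristic} together with the combinatorial criteria for realizability from \Cref{proposition existence of realizations}. The assumption $P_{f} \neq \emptyset$ is crucial here, because it guarantees a boundary-anchored arc on which loops can be created or absorbed without disturbing $\varphi$.
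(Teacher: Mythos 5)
Your overall strategy (local $(C)$/$(E)$ modifications seeded on a boundary-anchored arc, an orientation-reversing slide path for the non-orientable case, parity bookkeeping for the orientable case) points in the right direction, and your part (a) is close in spirit to the paper's argument, which runs a cusp pair around an orientation-reversing circle $C$ meeting $S(F)$ in $r$ points and converts the resulting $r$ elimination steps into $r$ loop generations, $r$ odd. But part (b) has two genuine gaps. First, for (i) $\Rightarrow$ (ii) you only propose to prove a parity statement, whereas condition (ii) also contains the inequalities $l \geq \Delta^{\sigma}$ and $l \geq \Delta^{-\sigma}$ (the sets $\Delta^{\pm\sigma} + 2\mathbb{N}$ are bounded below). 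Moreover, your invariance argument is circular as stated: $\Delta^{\sigma}$ depends only on the pattern $(f,\varphi)$ and $\sigma$, not on the realization, so it cannot ``shift'' under moves; showing that $l \bmod 2$ is preserved under moves would only prove that all reachable loop counts share a parity, not that this parity is $\Delta^{\sigma} \bmod 2$. Both the parity identification and the lower bounds require the quantitative identity $\Delta^{\rho} = l_{F} - 2h_{F}^{\rho} + 2n_{\rho}$ with $h_{F}^{\rho} \geq n_{\rho}$, which the paper obtains by combining the Haefliger winding-number computation of $\chi(W_{F}^{\sigma})$ (\Cref{proposition boundary turning invariant and Euler characteristic}) with the surgery description of $W_{F}^{\sigma}$ as discs with handles (\Cref{proposition surgery handle attachment}); your proposal never produces this identity.

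Second, for (ii) $\Rightarrow$ (i) you correctly flag the exhaustion of the minimum as the principal obstacle, but the proposed resolution (two families of $\pm 2$ loop moves) does not address the actual obstruction: reducing the loop count by $2$ requires a specific geometric configuration (the balancing move of \Cref{proposition balancing}, which needs a circle $H \subset W \setminus S(F)$ and a path crossing $S(F)$ in a prescribed way), and one must certify that enough such configurations exist to descend all the way to $\min(\mathbb{N} \cap (\Delta^{\sigma} + 2\mathbb{N}) \cap (\Delta^{-\sigma} + 2\mathbb{N}))$. The paper does this by first consolidating all loops onto one side via loop simplification and tunneling (\Cref{proposition loop simplification}, \Cref{proposition tunneling}), deducing $\Delta^{\rho} \leq 0$ for that side, and then extracting from the handle decomposition a $1$-submanifold $Z$ with exactly $h_{F}^{-\rho} - n_{-\rho}$ components along which the balancing moves are performed. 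Without this counting mechanism your algorithm has no guarantee of terminating at the claimed minimum rather than at some larger value.
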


In \Cref{applications} we use the gluing principle for singular patterns to generalize \Cref{MAIN THEOREM 2} to the case $P_{f} = \emptyset$ of pseudo-immersions (see \Cref{example pseudo-immersions}), as well as to the case that $F$ has cusps (see \Cref{corollary extending main result to cusps}).

The paper is structured as follows.
In \Cref{local moves} we present the local moves that will be essential for our approach.
The concept of boundary turning invariant will be introduced in \Cref{Boundary Turning Invariant}.
The proofs of our main results will cover \Cref{Number of Cusps of Generic Extensions} to \Cref{proof of main theorem 2}.
\Cref{applications} concludes with some applications.

\subsection*{Notation}
Let $\mathbb{N} = \{0, 1, 2, \dots\}$ denote the set of non-negative integers.
Let $W$ denote a $2$-dimensional connected compact smooth manifold.
In case of a non-empty boundary $P = \partial W$ we also fix a collar neighborhood $[0, \infty) \times P \subset W$.
Let $\chi(W)$ denote the Euler characteristic of $W$.
The plane $\mathbb{R}^{2}$ is equipped with the standard orientation.
We use the following convention for the induced orientation on the boundary of an oriented surface.
If the first vector of an oriented frame points out of the surface at a boundary point, then the second vector defines the orientation of the boundary at that point.

\subsection*{Acknowledgements}
The material presented in this paper is based on Chapter 5 of the author's PhD thesis \cite{wra2}.
The author is grateful to his advisor Professor Markus Banagl for constant encouragement and many fruitful discussions.
The author would like to thank Professor Takahiro Yamamoto, Professor Minoru Yamamoto and Professor Osamu Saeki for discussions and invaluable comments on an early version of the manuscript.
Also, the author thanks the anonymous referee for reading the paper very carefully, which led to several improvements.

The author is grateful to the German National Merit Foundation (Studienstiftung des deutschen Volkes) for financial support.
Moreover, the author has been supported by JSPS KAKENHI Grant Number JP17H06128.

\section{Local modifications}\label{local moves}

Throughout this section, let $F \colon W \rightarrow \mathbb{R}^{2}$ denote a realization of some pattern $(f, \varphi)$ on $W$ as defined in the introduction (see also \Cref{pattern}).

The discussion in this section will be supported by figures showing the singular locus of $F$ in open neighborhoods $U \subset W \setminus \partial W$.
These neighborhoods $U$ will be colored in grey.
The intersection $S(F) \cap U$ will be represented by massive black lines, whereas $S(F) \cap (W \setminus U)$ will be indicated by spotted lines in the figures.
The cusps of $F$ will be marked by discrete points on the massive lines.
Every cusp $c$ is furthermore equipped with a vector in $T_{x}W$ that points \emph{downward} in the sense of Levine (see the definition on p. 284 of \cite{lev}).
(A vector points downward at a cusp if its image in the plane points in the direction of the cusp.
Intuitively, the vector itself points into the half plane into which the cusp can propagate.)
The direction of a downward pointing tangent vector will be referred to as the direction into which the cusp \emph{points}.
Finally, \emph{joining curves} (see Section (4.4) in \cite[p. 285]{lev}) between cusps will be symbolized by dashed lines (see the left of \Cref{elementary moves}).
(A joining curve between two cusps $c_{0}$ and $c_{1}$ is an embedding $\alpha \colon [0, 1] \rightarrow W \setminus \partial W$ such that $\alpha^{-1}(S(F)) = \{0, 1\}$, $\alpha(i) = c_{i}$, $i = 0, 1$, and $(-1)^{i} \cdot \alpha'(i) \in T_{c_{i}}W$ points downward at $c_{i}$.)

Our approach is based on the following two local modifications (see \Cref{elementary moves}).

\begin{itemize}[(E)]
\item \textbf{Elimination of cusp pairs} (see \Cref{elementary moves}(a)).
A pair of cusps of $F$ can be eliminated if the cusps point into a common component of $W \setminus S(F)$ (see Fig. 3 in \cite[p. 286]{lev}).
In fact, it follows from the proofs of Lemma (1) and (2) in Section (4.4) of \cite[p. 285]{lev} that such a pair of cusps can be connected by a joining curve.
As $W$ is a $2$-dimensional cobordism, the pair of cusps of $F$ is a matching pair in the sense of \cite{lev} and hence \emph{removable} (as defined in Section (4.5) of \cite[p. 285]{lev}).
As explained in \cite[p. 286]{lev}, the pair of cusps can therefore be eliminated by a homotopy of $F$ that modifies $F$ only in a tubular neighborhood of the joining curve.
\end{itemize}

\begin{itemize}[(C)]
\item \textbf{Creation of cusp pairs} (see \Cref{elementary moves}(b)).
In a small neighborhood of a fold point of $F$ a pair of two new cusps can be introduced on the fold line in such a way that the cusps point into opposite directions of the singular curve.
This modification can be realized by a homotopy of $F$ that modifies $F$ only in this neighborhood and is based on the swallow-tail homotopy (for details, see for instance Section 4.7 in \cite[p. 110]{wra2}).
\end{itemize}

\begin{figure}[htbp]
\centering
\fbox{\begin{tikzpicture}
    \draw (0, 0) node {\includegraphics[height=0.4\textwidth]{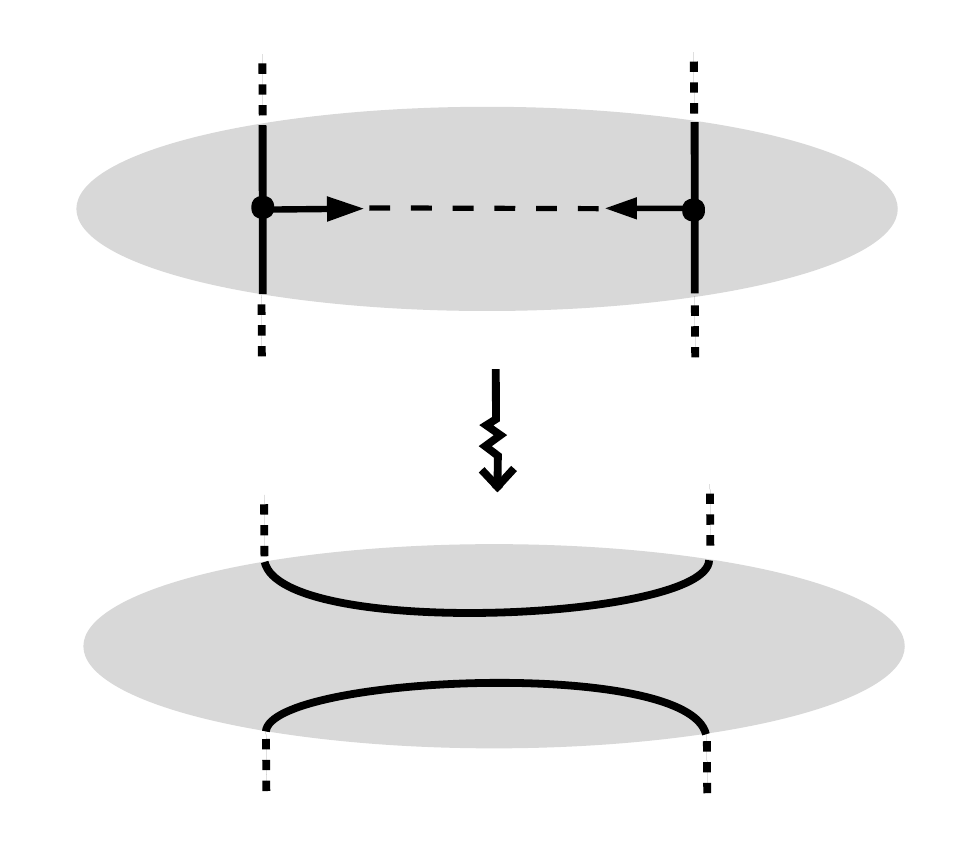}};
    \draw (0.5, 0) node {(E)};
    \draw (2.5, 2.2) node {(a)};
\end{tikzpicture}}
\fbox{\begin{tikzpicture}
    \draw (0, 0) node {\includegraphics[height=0.4\textwidth]{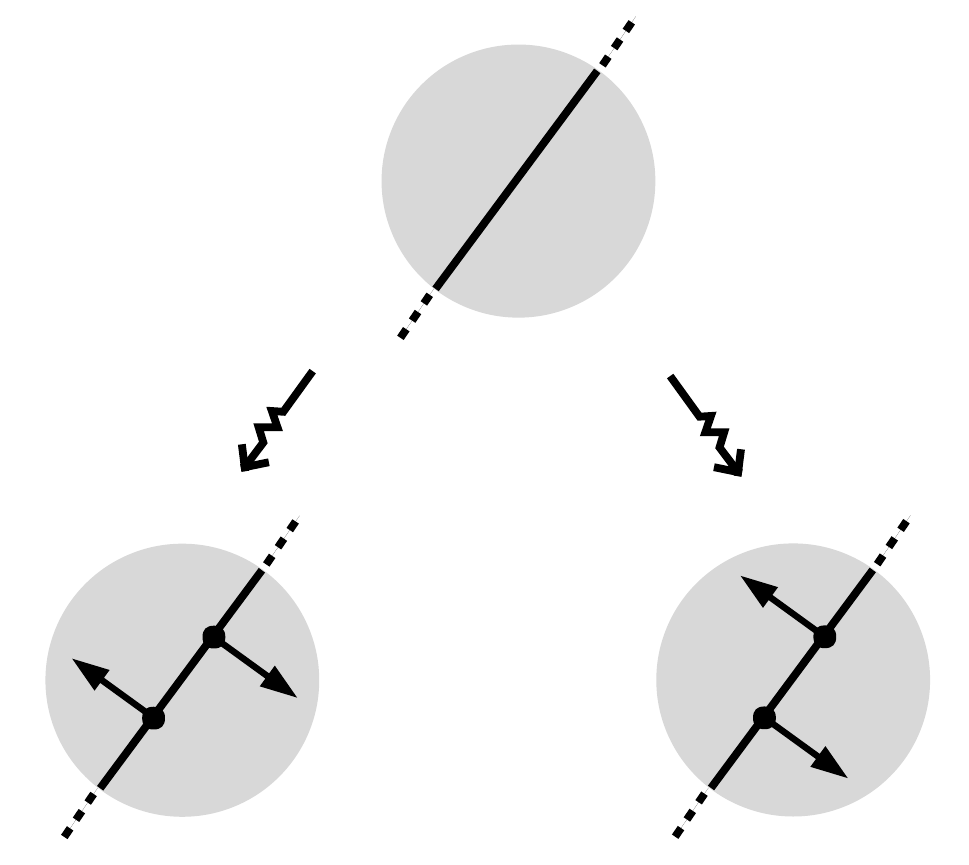}};
    \draw (1.9, 0.2) node {(C)};
    \draw (-1.8, 0.2) node {(C)};
    \draw (2.5, 2.2) node {(b)};
\end{tikzpicture}}
\caption{(a): Levine's homotopy \cite{lev} for \textbf{eliminating pairs of cusps} of generic maps between surfaces.
(b): The swallow-tail homotopy for \textbf{creating a pair of cusps} on a fold line.}
\label{elementary moves}
\end{figure}

\begin{definition}\label{definition loops}
By a \emph{loop} of $F$ we mean a component of $S(F)$ that is diffeomorphic to the circle.
A loop $C$ of $F$ is called
\begin{itemize}
\item \emph{pure} if $C$ contains no cusps of $F$.
\item \emph{trivial} if the normal bundle of $C$ in $W$ is trivial.
\item \emph{contractible} if there is an open disc $D \subset W \setminus \partial W$ such that $D \cap S(F) = C$.
\end{itemize}
\end{definition}

\begin{proposition}[Loop generation]\label{lemma new fold loops}
\begin{enumerate}[(i)]
\item Let $p \in W \setminus \partial W$ be a fold point of $F$.
Given a neighborhood $U \subset W$ of $p$, $F$ can be modified on $U$ by two (C) moves followed by two (E) moves in such a way that the modified map $\widetilde{F} \colon W \rightarrow \mathbb{R}^{2}$ has two loops more than $F$.
\item Let $c_{0}, c_{1}$ be two cusps of $F$.
Suppose that $\alpha \colon [0, 1] \rightarrow W \setminus \partial W$ is a joining curve (see Section (4.4) in \cite[p. 285]{lev}) between $\alpha(0) = c_{0}$ and $\alpha(1) = c_{1}$.
Fix a neighborhood $V$ of $\alpha([0, 1])$.
Let $\gamma_{i} \colon [-1, 1] \rightarrow S(F) \cap V$, $i = 0, 1$, be embeddings such that $\gamma_{i}(0) = c_{i}$ and $\gamma_{0}([-1, 1]) \cap \gamma_{1}([-1, 1]) = \emptyset$.
Suppose that $U \subset V$ is a neighborhood of $\alpha([0, 1])$ which does not contain the points $\gamma_{i}(\pm 1)$, $i = 0, 1$.
Then, $F$ can be modified on $U$ by one (E) move along the joining curve $\alpha$, followed by two (C) moves, followed by two (E) moves in such a way that the modified map $\widetilde{F} \colon W \rightarrow \mathbb{R}^{2}$ has one loop more than $F$ (namely, a contractible loop on $U$), and there exist embeddings $\widetilde{\gamma}_{i} \colon [-1, 1] \rightarrow S(\widetilde{F}) \cap V$, $i = 0, 1$, such that $\widetilde{\gamma}_{i}(j) = \gamma_{i}(j)$ for $j = \pm 1$.
\end{enumerate}
\end{proposition}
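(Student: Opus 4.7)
The plan relies on the following basic observation: the combination of a (C) move and a subsequent (E) move on the resulting cusp pair can be realized in two topologically distinct ways — the (E) either undoes the (C), merely reconnecting the fold arc, or it pinches off a small contractible loop of singular locus. Both parts of the proposition will be built by combining such elementary operations.

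For part (i), choose a small open disc $D \subset U$ around $p$ in which $S(F) \cap D$ is a single fold arc. Apply (C) at $p$ to create a cusp pair $c_{1}, c_{2}$ on this arc, then apply a second (C) at a fold point lying on the sub-arc between $c_{1}$ and $c_{2}$ to create $c_{3}, c_{4}$, giving the order $c_{1}, c_{3}, c_{4}, c_{2}$ along the modified arc. Applying (E) to $(c_{3}, c_{4})$ along a short joining curve routed on the pinching side pinches off the sub-arc between them as a contractible loop in $D$. The remaining pair $(c_{1}, c_{2})$ still forms a pinching pair on the modified fold arc, and a second (E) move similarly pinches off another contractible loop. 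The net effect inside $U$ is two new loops and no cusps.

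For part (ii), begin with the prescribed (E) move along $\alpha$, whose support can be taken inside a tubular neighborhood $N \subset U$ of $\alpha([0,1])$ chosen small enough to avoid the points $\gamma_{i}(\pm 1)$ and to intersect each $\gamma_{i}([-1,1])$ only in a short sub-arc through $c_{i}$. Since $S(F)$ is a smooth arc through each $c_{i}$, and the local model of Levine's elimination reconnects this arc across $N$ without a cusp, each $\gamma_{i}$ extends to an embedded arc $\widetilde{\gamma}_{i} \colon [-1, 1] \to S(F') \cap V$ with $\widetilde{\gamma}_{i}(\pm 1) = \gamma_{i}(\pm 1)$. Now perform two (C) moves and two (E) moves inside the interior of $N$, disjoint from the $\widetilde{\gamma}_{i}$, calibrating the construction so that the overall loop-count change of the entire sequence is exactly one. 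If the first (E) along $\alpha$ itself pinched off a loop (the case where $c_{0}, c_{1}$ lie on a common arc of $S(F)$ joined by a pinching $\alpha$), place the (C)s on separate fold arcs and pair the four resulting cusps so that both subsequent (E)s are reconnections; otherwise, run the part-(i) construction but route the joining curve of the second (E) through the loop freshly pinched by the first, turning the second (E) into a reconnection rather than a second pinch. In either case, $\widetilde{F}$ has exactly one additional contractible loop in $U$, no cusps, and preserves the extensions $\widetilde{\gamma}_{i}$.

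The main obstacle is to verify, by explicit inspection of the local pictures, that both the pinching and the non-pinching outcomes of each (E) move can actually be realized by appropriate choices of (C) positions and joining curves, and that in part (ii) the flexibility in those choices suffices to compensate for the two possible loop-count effects of the initial (E). This is a combinatorial exercise on the local topology of the singular set and is most naturally handled through diagrams of the type displayed in \Cref{elementary moves}.
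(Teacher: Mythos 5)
There is a genuine gap in both parts, stemming from a misreading of what an (E) move does to a pair of cusps produced by a single (C) move. In part (i) you eliminate exactly the two pairs that the two (C) moves created, namely $(c_{3},c_{4})$ and then $(c_{1},c_{2})$. But the two cusps of a single swallow-tail pair point into a common local component of $D\setminus S(F)$, and inside a small disc every joining curve between them is forced into that (simply connected) half-disc and hence is isotopic to the short one; the resulting (E) is precisely the reverse swallow-tail homotopy and restores the original fold arc. There is no ``pinching side'' available: the downward directions dictate the half-disc, and routing through the other half-disc would require crossing $S(F)$. So your sequence undoes both (C) moves and produces zero new loops. (The paper's own usage confirms this: whenever a freshly created pair is eliminated to nontrivial effect, as in the proofs of \Cref{MAIN THEOREM 1}(b) and \Cref{MAIN THEOREM 2}(a), the joining curve is taken to go \emph{around} a nontrivial cycle, not to be the short curve.) A correct construction must cross-pair cusps coming from \emph{different} (C) moves — e.g.\ with the cusps in the order $c_{1},c_{2},c_{3},c_{4}$ along the arc, choose the variants of the two (C) moves so that $(c_{2},c_{3})$ and then $(c_{1},c_{4})$ each point into a common component and each elimination closes up the intermediate segment into a circle.

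In part (ii), the assertion that after the initial (E) along $\alpha$ ``each $\gamma_{i}$ extends to an embedded arc $\widetilde{\gamma}_{i}$ with $\widetilde{\gamma}_{i}(\pm 1)=\gamma_{i}(\pm 1)$'' is false: Levine's elimination is a surgery that \emph{cross-connects} the four fold strands emanating from $c_{0}$ and $c_{1}$ (this is exactly how (E) is used in \Cref{proposition existence of realizations} to alter the singular pattern), so after the first move $\gamma_{0}(-1)$ is joined to a strand of $\gamma_{1}$, not to $\gamma_{0}(+1)$. The whole purpose of the subsequent $2\times$(C)$\,+\,2\times$(E) is to undo this cross-connection — restoring the adjacencies $\gamma_{i}(-1)\leftrightarrow\gamma_{i}(+1)$ recorded by the $\widetilde{\gamma}_{i}$ — while leaving behind exactly one contractible loop in $U$; your ``calibration'' discussion does not address this, and your closing paragraph explicitly defers the verification that the required outcomes of the individual (E) moves can be realized, which is the actual content of the proposition. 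Keep in mind that for a fixed configuration of cusps, downward directions and joining curve the outcome of an (E) move is \emph{determined}; ``pinching'' versus ``reconnecting'' is not a choice made at the (E) step but must be arranged beforehand through the placement and the variant of the (C) moves.
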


\begin{definition}\label{definition canonical orientation}
The \emph{canonical orientation} of $S(F)$ is uniquely determined by requiring that, at every fold point of $F$, $W$ is ``folded to the left'' into the plane, by which we mean the following.
Recall that every fold point $p \in S(F)$ admits a neighborhood $U \subset W$ such that $F$ restricts to an embedding $S(F) \cap U \rightarrow \mathbb{R}^{2}$, and $F(U) \subset \mathbb{R}^{2}$ is a codimension $0$ submanifold with boundary $F(S(F) \cap U)$.
Then the canonical orientation at $p \in S(F)$ is such that the induced orientation at $F(p) \in F(S(F) \cap U)$ coincides with the orientation of $F(S(F) \cap U)$ induced by $F(U) \subset \mathbb{R}^{2}$.
Note that the resulting orientation of the fold locus of $F$ extends unambiguously over the cusps of $F$.
\end{definition}

\begin{proposition}[General elimination of cusps]\label{lemma generalized cusp elimination}
Let $c_{0}, c_{1}$ be two cusps of $F$.
Suppose that $\alpha \colon [0, 1] \rightarrow W \setminus \partial W$ is an embedding such that $\alpha(i) = c_{i}$, $i = 0, 1$, $(-1)^{i} \cdot \alpha'(i)$ is a downward pointing tangent vector of $c_{i}$, $i = 0, 1$, and $\alpha|_{(0, 1)}$ intersects $S(F)$ transversely and only in fold points of $F$.
Fix a neighborhood $U$ of $\alpha([0, 1])$ and orientation preserving embeddings $\gamma_{0}, \gamma_{1} \colon [0, 1] \rightarrow S(F)$ (where $S(F)$ is equipped with the canonical orientation of \Cref{definition canonical orientation}) such that $\gamma_{0}(0) \notin U$, $\gamma_{0}(1) = c_{0}$, and $\gamma_{1}(0) = c_{1}$, $\gamma_{1}(1) \notin U$.
Then, $F$ can be modified on $U$ by a finite sequence of (E) and (C) moves in such a way that the modified map $\widetilde{F} \colon W \rightarrow \mathbb{R}^{2}$ has two cusps less than $F$, and there exists an embedding $\gamma \colon [0, 1] \rightarrow S(\widetilde{F})$ such that $\gamma(i) = \gamma_{i}(i)$, $i = 0, 1$.
\end{proposition}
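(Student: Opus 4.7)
The plan is to induct on $n := \#\{t \in (0,1) : \alpha(t) \in S(F)\}$, the number of interior transverse intersections of $\alpha$ with the fold locus. The base case $n=0$ reduces to Levine's standard elimination of a matching pair of cusps along a joining curve. The inductive step uses a (C) + (E) combination to trade one crossing for an auxiliary cusp pair, then cancels one new cusp against $c_0$ via a local joining curve, reducing $n$ by one while preserving all the structural hypotheses.

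For the base case $n=0$, the conditions $(-1)^i \alpha'(i)$ downward at $c_i$ together with $\alpha((0,1)) \cap S(F) = \emptyset$ say precisely that $\alpha$ is a joining curve between $c_0$ and $c_1$ in the sense of Levine. Apply (E) via a homotopy supported in a tubular neighborhood of $\alpha$ inside $U$. This replaces the four fold branches meeting $\{c_0, c_1\}$ by two smooth arcs in $S(\widetilde{F})$, running ``parallel'' to $\alpha$ on its two sides. Since the canonical orientation of \Cref{definition canonical orientation} extends across cusps and Levine's elimination is compatible with this extension, the fold branch parametrized by $\gamma_0$ (ending at $c_0$) connects through one of the new smooth arcs to the branch parametrized by $\gamma_1$ (starting at $c_1$). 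Concatenation provides the required embedding $\gamma \colon [0,1] \to S(\widetilde{F})$ with $\gamma(0) = \gamma_0(0)$ and $\gamma(1) = \gamma_1(1)$.

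For the inductive step $n \geq 1$, let $t_1 \in (0,1)$ be minimal with $p_1 := \alpha(t_1) \in S(F)$, and choose a disc neighborhood $N \subset U$ of $p_1$ avoiding $p_2, \ldots, p_n$ and $\gamma_i(\{0,1\})$. Apply a (C) move supported in $N$ to create cusps $q_-, q_+$ on the fold arc through $p_1$, pointing into opposite normal half-planes; the two possible configurations of (C) let us arrange that $q_-$ points toward the side from which $\alpha$ approaches $p_1$. Because $\alpha|_{[0, t_1)}$ avoids $S(F)$, a small perturbation of $\alpha|_{[0, t_1-\varepsilon]}$ bending into $N$ and terminating at $q_-$ produces a joining curve $\beta$ from $c_0$ to $q_-$ whose interior avoids the modified singular locus. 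Apply (E) along $\beta$, supported in a tubular neighborhood of $\beta$ inside $U$, to eliminate the pair $(c_0, q_-)$. As in the base case, (E) connects $\gamma_0$ to a fold branch emerging from the former position of $q_-$, yielding a canonically-oriented embedding $\widetilde{\gamma}_0$ terminating at $q_+$. Symmetrically, a short arc leaving $q_+$ downward into the opposite normal half-plane, concatenated with $\alpha|_{[t_1+\varepsilon, 1]}$, defines an embedding $\alpha^* \colon [0,1] \to W \setminus \partial W$ from $q_+$ to $c_1$ satisfying all the hypotheses of the proposition with intersection count $n-1$. Applying the induction hypothesis to $(q_+, c_1, \alpha^*, \widetilde{\gamma}_0, \gamma_1)$ in $U$ completes the elimination and produces the desired embedded arc $\gamma$.

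The main obstacle is tracking canonical orientations through each (C) + (E) combination. Levine's elimination pairs the four fold branches incident to the cancelled cusps in a specific way governed jointly by the joining curve and the canonical orientation, and one must verify at each step that the branch currently carrying $\widetilde{\gamma}_0$ is indeed matched with the branch that leads into the remaining cusp, so that the partial embeddings concatenate to an embedded arc at the end. A secondary technicality is arranging the direction of each (C) move and the target cusp $q_-$ within the resulting swallowtail so that the joining curve $\beta$ can be drawn inside $U$ without accidentally crossing the new swallowtail branches; this reduces to a local picture-check of the swallowtail normal form.
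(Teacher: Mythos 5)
Your proof is correct and follows essentially the same route as the paper: induction on the number of interior intersections of $\alpha$ with $S(F)$, with base case a direct application of (E) along the joining curve, and an inductive step that applies (C) at the crossing nearest $c_{0}$ and then (E) along a short joining curve from $c_{0}$ to one of the two new cusps, leaving the other new cusp to replace $c_{0}$ with one fewer crossing. The paper likewise resolves the orientation-tracking issue you flag by choosing between the two configurations of the (C) move according to the position of $\gamma_{0}(0)$ (its Figure~\ref{induction step}), so your "picture-check" caveat matches exactly what the published argument does.
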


\begin{proof}
We proceed by induction over the cardinality of $\alpha^{-1}(S(F)) = \{t_{0}, \dots, t_{r}\}$, where $0 = t_{r} < \dots < t_{0} = 1$.
If $r = 1$, then the claim follows by applying (E) to the joining curve $\alpha$, where note that the existence of $\gamma$ is automatic.
If $r > 1$, then apply one of the local modifications indicated in \Cref{induction step} depending on the position of $\gamma_{0}(0)$, and modify $\alpha$ on $[0, t_{r-2})$ by reducing the number of cusps by one.
\end{proof}

\begin{figure}[htbp]
\centering
\fbox{\begin{tikzpicture}
    \draw (0, 0) node {\includegraphics[height=0.4\textwidth]{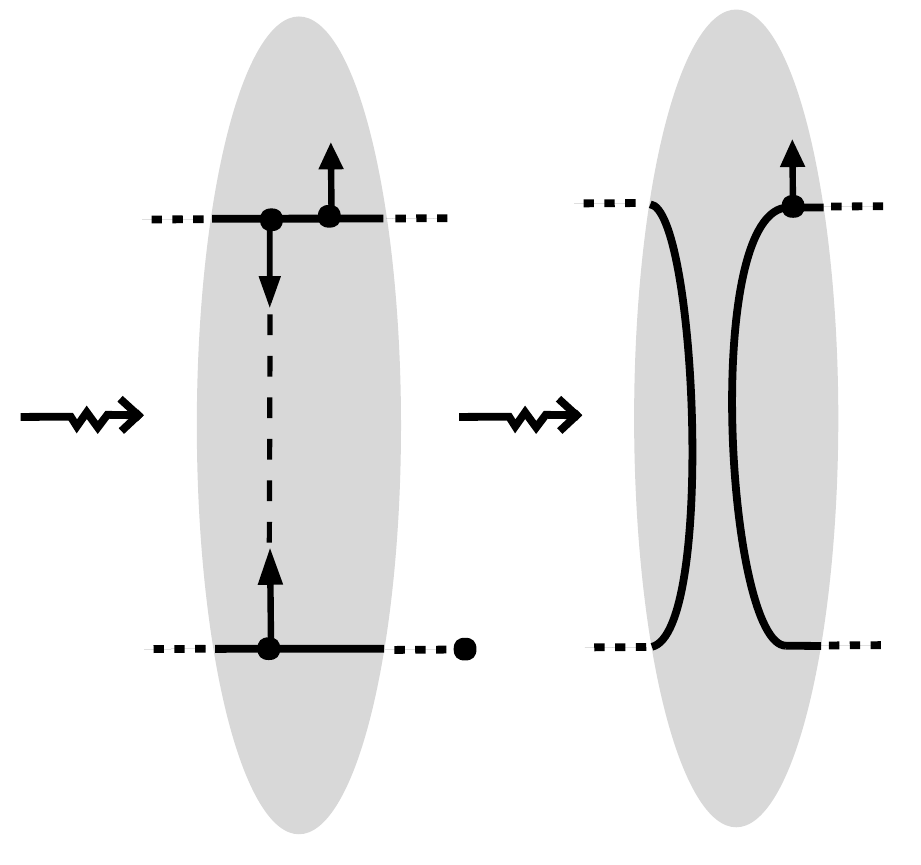}};
    \draw (0.3, 0.4) node {(E)};
    \draw (-2.2, 0.4) node {(C)};
    \draw (-1.0, -1.5) node {$c_{0}$};
    \draw (0, -1.6) node {$\gamma_{0}(0)$};
\end{tikzpicture}}
\fbox{\begin{tikzpicture}
    \draw (0, 0) node {\includegraphics[height=0.4\textwidth]{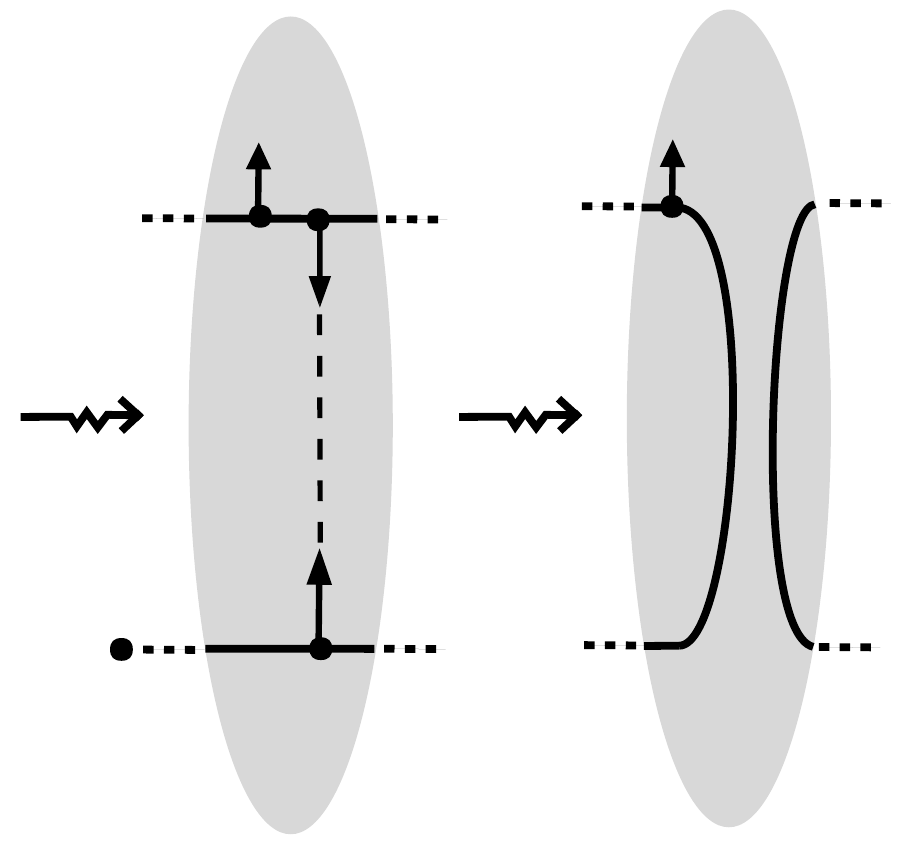}};
    \draw (0.3, 0.4) node {(E)};
    \draw (-2.2, 0.4) node {(C)};
    \draw (-0.8, -1.5) node {$c_{0}$};
    \draw (-2.1, -1.6) node {$\gamma_{0}(0)$};
\end{tikzpicture}}
\caption{Induction step for general elimination of cusps.}
\label{induction step}
\end{figure}

\begin{remark}
An explicit version of \Cref{lemma generalized cusp elimination} (not claiming the existence of the embedding $\gamma$) is originally due to K\'{a}lm\'{a}n (see Lemma 1.4 of \cite[p. 308]{kal}).
\end{remark}

\begin{proposition}[Loop simplification, see \Cref{figure loop simplification}]\label{proposition loop simplification}
Let $C$ be a loop of $F$.
Suppose that $\alpha \colon [0, 1] \rightarrow W \setminus \partial W$ is an embedding such that $\alpha^{-1}(S(F)) = \{0, 1\}$, where $\alpha(0) \in C$ and $\alpha(1) \notin C$ are fold points of $F$.
Fix a neighborhood $U$ of $\alpha([0, 1])$.
Then, $F$ can be modified on $U$ by a finite sequence of (E) and (C) moves in such a way that the modified map $\widetilde{F} \colon W \rightarrow \mathbb{R}^{2}$ has a contractible loop on $U$, and $F$ and $\widetilde{F}$ have the same number of loops and the same number of cusps.
\end{proposition}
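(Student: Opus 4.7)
The plan is to produce the desired contractible loop on $U$ by performing two (C) moves near the endpoints of $\alpha$ and then undoing them by two applications of Proposition~\ref{lemma generalized cusp elimination} along arcs parallel to $\alpha$. The combined effect of these moves is a band-type modification of $S(F)$ along $\alpha$ whose net result, when the parallel arcs are placed correctly, is to pinch off a small contractible loop in $U$ without changing the total count of loops or of cusps.

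Let $C'$ denote the component of $S(F)$ containing $\alpha(1)$; note $C' \neq C$ since $\alpha(1) \notin C$. In disjoint small neighborhoods of $\alpha(0)$ and $\alpha(1)$ inside $U$, I first apply one (C) move on $C$ near $\alpha(0)$ to produce cusps $c_0^\pm$ on $C$, and one (C) move on $C'$ near $\alpha(1)$ to produce cusps $c_1^\pm$ on $C'$. Since a (C) move creates a pair of cusps pointing into opposite sides of the fold line, I can label them so that $c_0^+$ and $c_1^+$ both point into the local component of $W \setminus S(F)$ traversed by $\alpha|_{(0,1)}$, while $c_0^-$ and $c_1^-$ point into the opposite local regions. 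After these (C) moves the singular locus has four new cusps but the same components as before; in particular $C$ is still a loop and the total number of loops is unchanged.

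Next, I apply Proposition~\ref{lemma generalized cusp elimination} to a small perturbation of $\alpha$ to cancel the pair $c_0^+, c_1^+$. Since the perturbed arc can be arranged to avoid $S(F)$ on its interior, the application reduces to a single (E) move along a joining curve isotopic to $\alpha$. Then I apply Proposition~\ref{lemma generalized cusp elimination} a second time, along a parallel copy of $\alpha$ contained in $U$ and meeting the updated singular locus transversely in fold points only, to cancel the remaining pair $c_0^-, c_1^-$. Both applications stay inside $U$, and their combined effect removes the four cusps created by the (C) moves, so the cusp count of the final map $\widetilde F$ equals that of $F$.

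It remains to check that exactly one contractible loop has been created in $U$ and that the total loop count is preserved. The two cancellations, taken together, perform a band modification of $S(F)$ along a thin rectangle whose core is $\alpha$: two small arcs of $C$ and $C'$ (one near each endpoint of $\alpha$) are replaced by two arcs running along the long sides of the rectangle. Up to isotopy, this double surgery reconnects the pieces of $C \cup C'$ so that one of the four rerouted pieces bounds a small disc in $U$, producing a contractible loop, while the remaining pieces merge $C$ with $C'$ into a single component of the same loop-or-arc type as $C'$; since $C$ was itself a loop, the net change in the number of loops is zero. The main delicacy will be arranging the cusp orientations and the parallel arcs so that the outcome of the double surgery is genuinely ``one contractible loop plus one merge'' and not, for instance, ``two merges'' or ``two contractible loops.'' This is an explicit local computation in the swallowtail (C) and Levine (E) models, and it is precisely the picture recorded in \Cref{figure loop simplification}.
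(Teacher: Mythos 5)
Your proposal is correct and follows essentially the same route as the paper, which proves this proposition purely by the picture in \Cref{figure loop simplification}: two (C) moves (one near each endpoint of $\alpha$) followed by two (E) moves along parallel joining curves, whose combined effect is the double band surgery you describe, pinching off one contractible loop while merging $C$ into the component through $\alpha(1)$. The one point you flag as delicate --- arranging the pointing directions of the created cusps so that both eliminations are admissible and the surgery yields ``one contractible loop plus one merge'' --- is exactly the content the paper certifies by the figure rather than by a written argument, so your treatment is at the same level of rigor as the original.
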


\begin{proposition}[Loop reduction, see \Cref{figure loop reduction}]\label{proposition loop reduction}
Let $C_{0}, C_{1}$ be two loops of $F$.
Suppose that $\alpha \colon [0, 1] \rightarrow W \setminus \partial W$ is an embedding such that $\alpha^{-1}(S(F)) = \{0, t, 1\}$ for some $t \in (0, 1)$, where $\alpha(i) \in C_{i}$, $i = 0, 1$, are fold points of $F$, and $\alpha$ intersects $S(F)$ transversely at $t$ in a fold point $\alpha(t) \notin C_{0} \cup C_{1}$ of $F$.
Fix a neighborhood $U$ of $\alpha([0, 1])$.
Then, $F$ can be modified by a finite sequence of (E) and (C) moves in such a way that the modified map $\widetilde{F} \colon W \rightarrow \mathbb{R}^{2}$ has two loops less than $F$, and $F$ and $\widetilde{F}$ have the same number of cusps.
\end{proposition}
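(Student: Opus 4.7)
The plan is to mimic the strategy of the preceding \Cref{proposition loop simplification}, augmented by the extra fold crossing of $\alpha$ at $\alpha(t)$, which lies on some component $C_2$ of $S(F)$ distinct from $C_0$ and $C_1$. Roughly: introduce cusps at the crossings of $\alpha$ with $S(F)$ via (C), eliminate them in pairs via (E) moves along short subarcs of $\alpha$, and by appropriate pairing arrange that each (E) merges two components of $S(F)$ into one, so that the loop count decreases by two while the cusp count is preserved.

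Concretely, I first apply (C) at $\alpha(0) \in C_0$ and at $\alpha(1) \in C_1$, creating pairs of cusps $c_0, c_0' \in C_0$ and $c_1, c_1' \in C_1$ near $\alpha(0)$ and $\alpha(1)$, respectively. By choosing the orientation of each swallow-tail appropriately, arrange that $\alpha'(0)$ is a downward pointing tangent vector at $c_0$ and $-\alpha'(1)$ is downward at $c_1$. Next, apply \Cref{lemma generalized cusp elimination} to $(c_0, c_1)$ using a small perturbation of $\alpha$ as the embedding; the hypothesis is met because the perturbed path still crosses $S(F)$ only at the fold point $\alpha(t)$ in its interior. The induction step of that proposition carries this out by applying (C) at $\alpha(t)$ (creating cusps $d, d'$ on $C_2$) and then two (E) moves which remove first $\{c_0, d\}$ and then $\{d', c_1\}$. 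Finally, connect the remaining cusps $c_0'$ and $c_1'$ by a joining curve inside $U$ and apply one further (E) to eliminate them, so that the cusp count is restored to that of $F$.

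For the loop count, the crucial observation is that each of the two (E) moves within the generalized cusp elimination eliminates a matching pair of cusps on distinct components of $S(F)$ -- first $\{c_0, d\} \subset C_0 \cup C_2$, then $\{d', c_1\}$ joining the previous merge to $C_1$ -- so each merges those two components into one (see \cite[\S 4.4]{lev} and Figure 3 of \cite{lev}). Consequently $C_0$, $C_1$ and $C_2$ fuse into a single component $\widehat C$ of the modified singular locus. The concluding (E) on $\{c_0', c_1'\}$ then acts entirely within $\widehat C$; by routing the joining curve appropriately within $U$ we can arrange that this last (E) introduces no new loop. Tracking the topological type of $\widehat C$ (arc or loop, according to the type of $C_2$), one verifies that the total loop count decreases by exactly $2$. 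The main obstacle is precisely this last bookkeeping: verifying in each case that the joining curve between $c_0'$ and $c_1'$ can be chosen so that Levine's local resolution of (E) leaves $\widehat C$ with the correct number of components, rather than creating or destroying an extra loop. This requires a direct inspection of the local model of (E) and a small case analysis depending on the type of $C_2$.
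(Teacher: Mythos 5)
There is a genuine gap, and it sits exactly where you flag the ``last bookkeeping'': the concluding (E) move on $\{c_0', c_1'\}$ cannot be arranged to preserve the number of components of the singular locus, no matter how the joining curve is routed. The reason is that an (E) move acts on $S(F)$ as a surgery at the two cusps which must be compatible with the canonical orientation of \Cref{definition canonical orientation}: outside a tubular neighborhood of the joining curve the map, and hence the canonical orientation of the fold branches, is unchanged, so each of the two new fold arcs produced by the elimination must run from a branch-end oriented into that neighborhood to a branch-end oriented out of it. For two cusps lying on the \emph{same} component $\widehat C$ this forces the ``splitting'' reconnection: the segment of $\widehat C$ between $c_0'$ and $c_1'$ closes up into a new circle, so your final (E) always \emph{increases} the loop count by one. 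The total is then $-1-1+1=-1$ loops, not $-2$. A parity check shows this cannot be repaired by rerouting or by a case analysis on the type of $C_2$: every (E) changes the number of components of $S(F)$ by exactly $\pm 1$ (merge or split), and every (C) leaves it unchanged, so your three (E) moves change the component count by an odd number, whereas fusing $C_0$, $C_1$ and the component through $\alpha(t)$ into a single component is an even change ($-2$).

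This is precisely why the paper's figure uses four (C) and four (E) moves rather than your three of each. After the two ``across'' eliminations that merge the three components (each contributing $-1$ to the loop count), one further (C) is applied on the merged component, positioned between the two leftover cusps so that the remaining four cusps can be removed by two (E) moves of which one splits ($+1$ loop, producing a small contractible circle) and the next immediately re-merges ($-1$ loop), for a net change of zero in the final stage. Your first five moves essentially reproduce the first half of the paper's procedure; the fix is to replace your single final (E) by a (C) followed by two (E) moves, taking care to place the new cusp pair so that the second of these eliminations joins cusps on \emph{different} components.
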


\begin{proposition}[Tunneling, see \Cref{figure tunneling}]\label{proposition tunneling}
Let $C$ be a trivial loop of $F$.
Suppose that $\alpha \colon [0, 1] \rightarrow W \setminus \partial W$ is an embedding such that $\alpha^{-1}(S(F)) = \{0, t_{0}, t_{1}\}$ for some $0 < t_{0} < t_{1} < 1$, where $\alpha(0) \in C$, and $\alpha$ intersects $S(F)$ at $t_{i}$, $i = 0, 1$, transversely in fold points $\alpha(t_{i}) \notin C$ of $F$.
Fix a neighborhood $V$ of $C \cup \alpha([0, 1])$.
Let $C' \subset V \setminus S(F)$ be an embedded circle which is contained in a tubular neighborhood of $C \subset W$ such that $\alpha^{-1}(C') = \{s\}$ for some $0 < s < t_{0}$.
Let $\gamma_{i} \colon [-1, 1] \rightarrow S(F) \cap V$, $i = 0, 1$, be embeddings such that $\gamma_{i}(0) = \alpha(t_{i})$ and $\gamma_{0}([-1, 1]) \cap \gamma_{1}([-1, 1]) = \emptyset$.
Suppose that $U \subset V$ is a neighborhood of $C' \cup \alpha([s, t_{1}])$ which does neither contain $C$ nor any of the points $\alpha(1)$ and $\gamma_{i}(\pm 1)$, $i = 0, 1$.
Then, $F$ can be modified on $U$ by a finite sequence of (E) and (C) moves in such a way that the modified map $\widetilde{F} \colon W \rightarrow \mathbb{R}^{2}$ has the same number of loops and the same number of cusps as $F$, and there exists an embedding $\widetilde{\alpha} \colon [0, 1] \rightarrow V$ such that $\widetilde{\alpha}^{-1}(S(\widetilde{F})) = \{0\}$ and $\widetilde{\alpha}(i) = \alpha(i)$, $i = 0, 1$, and embeddings $\widetilde{\gamma}_{i} \colon [-1, 1] \rightarrow S(\widetilde{F}) \cap V$, $i = 0, 1$, such that $\widetilde{\gamma}_{i}(j) = \gamma_{i}(j)$ for $j = 0, 1$.
\end{proposition}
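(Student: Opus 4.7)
The plan is to perform a controlled local surgery in $U$ that reroutes the two fold components $L_0, L_1$ through $\alpha(t_0)$ and $\alpha(t_1)$ around the auxiliary circle $C' \subset V \setminus S(F)$, exploiting the triviality of the loop $C$ to preserve the loop count. Since $C$ has trivial normal bundle, $C'$ together with $C$ cobounds an annular region inside $V$; this provides a coherent normal direction along which the surgery can be carried out, and the sought-after arc $\widetilde{\alpha}$ will be routed through the strip that the surgery frees of fold crossings.

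\textbf{Step 1 (cusp creation).} I would apply (C) moves on each of $L_0$ and $L_1$ to introduce a pair of cusps flanking $\alpha(t_0)$ along $\gamma_0$ and a pair flanking $\alpha(t_1)$ along $\gamma_1$, choosing the orientations so that each created cusp points into the thin strip between the corresponding $L_i$ and the arc $\alpha$. This adds four cusps to the total count and takes place entirely inside $U$, away from $C$, from $\alpha(1)$, and from the endpoints $\gamma_i(\pm 1)$.

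\textbf{Step 2 (cusp elimination re-splicing the folds).} I would then apply \Cref{lemma generalized cusp elimination} twice to remove the four cusps in two matched pairs. The joining curves are chosen to detour around $C'$: one curve passes on the $\alpha(1)$-side of $\alpha$ and pairs a cusp on $L_0$ with one on $L_1$, the other passes on the $C'$-side and pairs the remaining two. Each elimination re-splices the pair of local fold branches, and the combined effect is that the portions of $L_0$ and $L_1$ inside $U$ are reconnected into two fold arcs wrapping around $C'$ that no longer cross $\alpha$ between parameters $s$ and $t_1$. The cusp count is restored to that of $F$.

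\textbf{Step 3 (bookkeeping and construction of $\widetilde{\alpha}, \widetilde{\gamma}_i$).} The loop count is preserved: the surgery is confined to a region of $V$ whose only singular set consists of the portions of $L_0$ and $L_1$ inside $U$, while $C$ and every singular component meeting $W \setminus U$ are untouched; by the triviality of $C$ (and hence the annular structure around $C'$), a re-splicing confined to such a region neither creates nor destroys a loop. I would then construct $\widetilde{\alpha}$ as a smooth embedding that follows $\alpha$ outside $U$ and, inside $U$, detours through the freshly cleared strip between the new fold arcs and the original $\alpha$ to reach $\alpha(1)$, guaranteeing $\widetilde{\alpha}^{-1}(S(\widetilde{F})) = \{0\}$. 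The arcs $\widetilde{\gamma}_i$ are obtained as the reconfigured fold components joining $\gamma_i(-1)$ to $\gamma_i(1)$, whose endpoints lie outside $U$ and are therefore unaffected.

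The hardest part will be Step 2: verifying that each matched cusp pair actually admits a joining curve of the type required by \Cref{lemma generalized cusp elimination}, with the downward-pointing tangent vector condition at both endpoints realized on the correct side of $C'$; and confirming that the re-splicing truly preserves the loop count. Both points hinge on the triviality of $C$ and on the careful positioning of $C'$ and the neighborhoods $\gamma_i([-1,1])$ prescribed in the hypotheses.
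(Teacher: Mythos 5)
Your overall strategy is the right one and matches what the paper does in \Cref{figure tunneling}: break the two fold arcs through $\alpha(t_0)$ and $\alpha(t_1)$ by creating cusps, then eliminate the cusps along joining curves, one routed the short way between the two arcs and one routed around $C'$, so that the re-spliced folds wind around the annulus and free a corridor for $\widetilde{\alpha}$. However, there is a concrete gap at exactly the point you flag in Step 3. The paper's construction uses \emph{four} (C) moves and four (E) moves, and the resulting re-splicing does \emph{not} preserve the number of loops: it produces two extra contractible loops, which are then removed by a separate application of loop reduction (\Cref{proposition loop reduction}), as the caption of \Cref{figure tunneling} states explicitly. Your blanket assertion that ``a re-splicing confined to such a region neither creates nor destroys a loop'' is therefore not something you can take for granted; each (E) move reconnects the four local fold branches at the two cusps, and depending on which branches pair up, a reconnection can close off a new circle component (or, worse, fuse the two arcs into one arc plus a loop, which would also destroy the existence of the two disjoint embeddings $\widetilde{\gamma}_0,\widetilde{\gamma}_1$ with the prescribed endpoints). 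To close the gap you must either trace the branch pairings through both eliminations and verify that the outcome is exactly two properly embedded arcs with the original boundary behavior, or accept that extra contractible loops may appear and append a loop-reduction step as the paper does.

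A second, related issue is the realizability of your two joining curves. The two cusps produced by a single (C) move come with prescribed downward directions (they ``point into opposite directions of the singular curve''), and a joining curve is required to leave and arrive in the downward directions; moreover your second curve, the one routed around $C'$, necessarily re-enters the finger along $\alpha$ and meets the already re-spliced singular locus in its interior, so it is only usable via \Cref{lemma generalized cusp elimination}, whose internal (C)/(E) steps perform further re-splicings that you also do not track. These pointing and crossing constraints are precisely why the paper spends two (C) moves on each fold arc rather than one: it needs cusps available on both sides of each arc to serve both the short and the long joining curve. As written, your two-(C)-move economy is not justified, and the ``same number of loops'' and ``$\widetilde{\gamma}_i$'' clauses of the proposition remain unproved.
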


\begin{proposition}[Balancing, see \Cref{figure balancing}]\label{proposition balancing}
Let $C_{0}, C_{1}$ be two loops of $F$.
Suppose that $\alpha \colon [0, 1] \rightarrow W \setminus \partial W$ is an embedding such that $\alpha^{-1}(S(F)) = \{0, t_{0}, t_{1}, 1\}$ for some $0 < t_{0} < t_{1} < 1$, where $\alpha(i) \in C_{i}$, $i = 0, 1$, are fold points of $F$, and $\alpha$ intersects $S(F)$ at $t_{i}$, $i = 0, 1$, transversely in fold points $\alpha(t_{i}) \notin C_{0} \cup C_{1}$ of $F$.
Furthermore, suppose that $H \subset W \setminus S(F)$ is an embedded circle that intersects $\alpha$ transversely, and such that $\alpha^{-1}(H) = \{s\}$ for some $s \in (t_{0}, t_{1})$.
Fix a neighborhood $V$ of $H \cup \alpha([0, 1])$.
Let $\gamma_{i} \colon [-1, 1] \rightarrow S(F) \cap V$, $i = 0, 1$, be embeddings such that $\gamma_{i}(0) = \alpha(t_{i})$ and $\gamma_{0}([-1, 1]) \cap \gamma_{1}([-1, 1]) = \emptyset$.
Suppose that $U \subset V$ is a neighborhood of $H \cup \alpha([0, 1])$ which does not contain the points $\gamma_{i}(\pm 1)$, $i = 0, 1$.
Then, $F$ can be modified on $U$ by a finite sequence of (E) and (C) moves in such a way that the modified map $\widetilde{F} \colon W \rightarrow \mathbb{R}^{2}$ has two loops less than $F$, whereas $F$ and $\widetilde{F}$ have the same number of cusps, and there exist embeddings $\widetilde{\gamma}_{i} \colon [-1, 1] \rightarrow S(\widetilde{F}) \cap V$, $i = 0, 1$, such that $\widetilde{\gamma}_{i}(j) = \gamma_{i}(j)$ for $j = 0, 1$.
\end{proposition}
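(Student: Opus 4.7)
The strategy is to combine the previously established local moves to reduce the configuration to one satisfying the hypotheses of loop reduction (\Cref{proposition loop reduction}), and then apply that proposition once to eliminate both $C_{0}$ and $C_{1}$ simultaneously. The obstruction to a direct application of \Cref{proposition loop reduction} is that $\alpha$ meets $S(F)$ at two intermediate fold points rather than at one; the auxiliary circle $H$, which intersects $\alpha$ transversely at the single interior point $\alpha(s)$ inside a component of $W \setminus S(F)$, provides the topological input needed to eliminate one of the two intermediate crossings without changing the global combinatorics.

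The plan is as follows. First, apply a (C) move to introduce a pair of cusps on the fold line through $\alpha(t_{0})$, and another (C) move to introduce a pair of cusps on the fold line through $\alpha(t_{1})$; both are supported in arbitrarily small neighborhoods inside $U$ and do not affect $\gamma_{i}(\pm 1)$. Next, choose two joining curves (in the sense of Section (4.4) of \cite{lev}) connecting the newly created cusps in pairs, routed so that one of them passes through a tubular neighborhood of $H \subset V \setminus S(F)$; the existence of such joining curves is ensured by the fact that $H$ furnishes a controlled path in $W \setminus S(F)$ between the two sides of the intermediate region. Apply (E) moves along these two joining curves. The net effect is to surger the two intermediate fold crossings of $\alpha$ into a single crossing in the resulting configuration, producing an arc $\alpha' \colon [0,1] \to W \setminus \partial W$ from $C_{0}$ to $C_{1}$ with $(\alpha')^{-1}(S(F')) = \{0, t', 1\}$ for some $t' \in (0,1)$, where $F'$ denotes the intermediate map obtained so far. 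Because each (C) introduces a cusp pair and each (E) removes one, the cusp count is preserved.

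Finally, apply \Cref{proposition loop reduction} to the arc $\alpha'$, the loops $C_{0}, C_{1}$, and the newly created intermediate fold point. This eliminates both $C_{0}$ and $C_{1}$, yielding $\widetilde{F}$ with two loops fewer than $F$ and the same number of cusps.

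The main obstacle will be verifying two technical conditions: (i) that all (C) and (E) moves remain supported inside $U$, and (ii) that the embeddings $\gamma_{i}$ persist as $\widetilde{\gamma}_{i} \colon [-1,1] \to S(\widetilde{F}) \cap V$ with $\widetilde{\gamma}_{i}(j) = \gamma_{i}(j)$ for $j = 0, 1$. Condition (i) is handled by the hypothesis that $U$ is a neighborhood of $H \cup \alpha([0,1])$ avoiding $\gamma_{i}(\pm 1)$, which leaves sufficient room for the cusp-creation neighborhoods and the tubular neighborhoods of the joining curves. Condition (ii) requires choosing the joining curves so that they meet the fold lines through $\alpha(t_{i})$ transversally exactly at the cusps introduced on them, so that the fold arcs parameterized by $\gamma_{i}$ are preserved in a neighborhood of their endpoints at $j = 0, 1$; the arcs $\widetilde{\gamma}_{i}$ are then obtained by restricting to the new fold lines produced by the (E) moves.
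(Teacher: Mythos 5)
Your overall strategy --- reducing Balancing to Loop Reduction (\Cref{proposition loop reduction}) by first merging the two intermediate fold crossings of $\alpha$ into a single one --- cannot work, because of a parity obstruction. Along $\alpha$, the points $\alpha(0^{+})$ and $\alpha(1^{-})$ just inside $C_{0}$ and $C_{1}$ are separated by exactly two transverse fold crossings, so $F$ either preserves a local orientation transported along $\alpha$ at both points or reverses it at both (in the orientable setting where the move is applied: both points lie in $W_{F}^{\sigma}$ or both in $W_{F}^{-\sigma}$). This local behavior is unchanged by any modification of $F$ supported away from these points, and all of your moves are supported near $\alpha([t_{0},t_{1}])\cup H$. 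Consequently, any arc $\alpha'$ from $C_{0}$ to $C_{1}$ agreeing with $\alpha$ near its endpoints and meeting $S(F')$ transversely in fold points must cross $S(F')$ an \emph{even} number of times in its interior; the configuration $(\alpha')^{-1}(S(F'))=\{0,t',1\}$ required by \Cref{proposition loop reduction} is therefore unattainable. Loop Reduction and Balancing are complementary moves --- the former handles the case where $C_{0}$ and $C_{1}$ are adjacent, along the connecting arc, to regions of opposite type, the latter the case of equal type --- and no finite sequence of interior (E) and (C) moves converts one situation into the other.

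A symptom of the same problem is that your argument does not genuinely use $H$: you route one joining curve ``through a tubular neighborhood of $H$'' without analyzing what winding around $H$ accomplishes. That $H$ is indispensable follows from \Cref{corollary computation of delta invariant in terms of data of a realization without cusps}: since $\Delta^{\sigma}=l_{F}-2h_{F}^{\sigma}+2n_{\sigma}$ is determined by the pattern alone, removing two loops forces $h_{F}^{\sigma}$ to drop by one, so the move must consume a handle of $W_{F}^{\pm\sigma}$, and $H$ is precisely that handle; a balancing move supported in a disc around $\alpha([0,1])$ would contradict the lower bound on $l_{F}$ in \Cref{MAIN THEOREM 2}(b). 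The paper's proof (\Cref{figure balancing}) instead works directly on an annular neighborhood of $H\cup\alpha([0,1])$: it applies (C) four times --- on $C_{0}$, on $C_{1}$, and on the two fold curves through $\alpha(t_{0})$ and $\alpha(t_{1})$ --- and then performs four (E) moves along joining curves inside that annulus, absorbing both loops into the arcs carrying $\gamma_{0}$ and $\gamma_{1}$ while using the band around $H$ to make the reconnection consistent. A correct write-up must treat this even-crossing case directly rather than trying to reach the odd-crossing case of \Cref{proposition loop reduction}.
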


\begin{figure}[htbp]
\centering
\fbox{\begin{tikzpicture}
    \draw (0, 0) node {\includegraphics[height=0.24\textwidth]{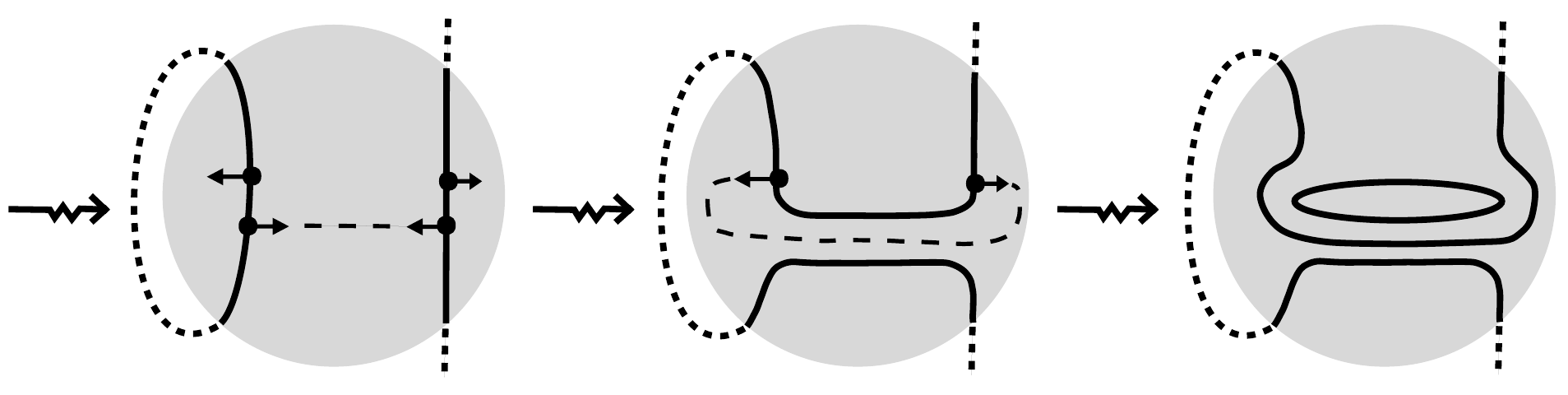}};
    \draw (-5.7, 0.25) node {$2 \times$(C)};
    \draw (-1.6, 0.25) node {(E)};
    \draw (2.5, 0.25) node {(E)};
    \draw (-4.8, 1.3) node {$C$};
\end{tikzpicture}}
\caption{Loop simplification.
The modification is performed as indicated on an embedded open disc in $W \setminus \partial W$ containing $\alpha([0, 1])$.}
\label{figure loop simplification}
\end{figure}

\begin{figure}[htbp]
\centering
\fbox{\begin{tikzpicture}
    \draw (0, 0) node {\includegraphics[height=0.32\textwidth]{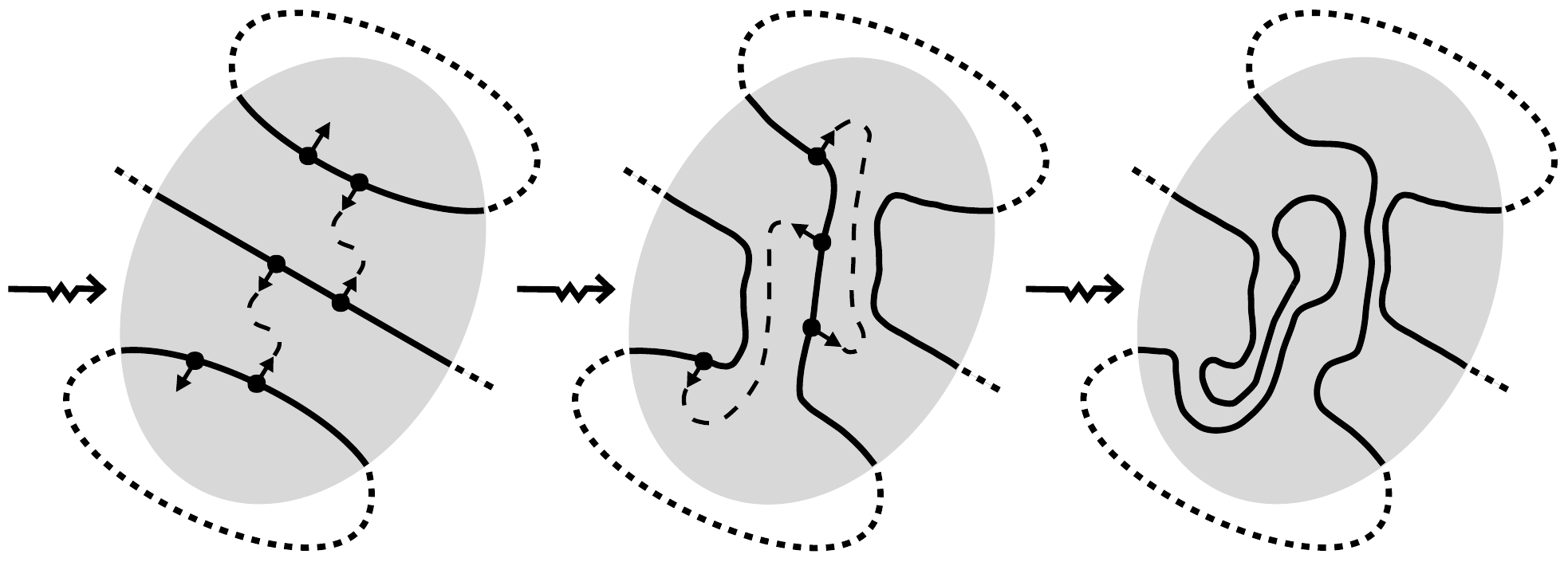}};
    \draw (-5.3, 0.25) node {$3 \times$(C)};
    \draw (-1.6, 0.25) node {$2 \times$(E)};
    \draw (-1.6, -0.39) node {(C)};
    \draw (2.1, 0.25) node {$2 \times$(E)};
    \draw (-2.6, -1.7) node {$C_{0}$};
    \draw (-4.3, 1.7) node {$C_{1}$};
\end{tikzpicture}}
\caption{Loop reduction.
The modification is performed as indicated on an embedded open disc in $W \setminus \partial W$ containing $\alpha([0, 1])$.}
\label{figure loop reduction}
\end{figure}

\begin{figure}[htbp]
\centering
\fbox{\begin{tikzpicture}
    \draw (0, 0) node {\includegraphics[height=0.32\textwidth]{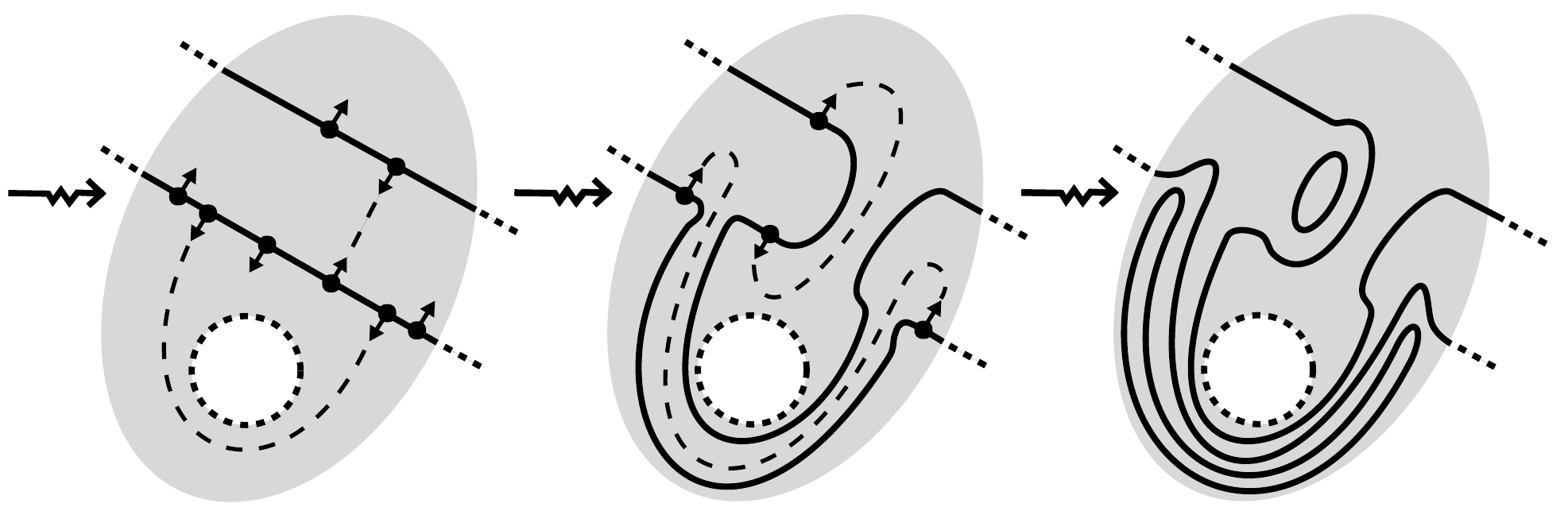}};
    \draw (-6, 0.8) node {$4 \times$(C)};
    \draw (-1.9, 0.8) node {$2 \times$(E)};
    \draw (2.1, 0.8) node {$2 \times$(E)};
    \draw (-4.4, -0.8) node {$C$};
    \draw (-2.15, -1.0) node {$\gamma_{0}$};
    \draw (-5.0, 1.7) node {$\gamma_{1}$};
\end{tikzpicture}}
\caption{Tunneling.
The modification is performed as indicated on an embedded open annulus in $U$ containing $C' \cup \alpha([s, t_{1}])$.
The construction is completed by using loop reduction (\Cref{proposition loop reduction}) to eliminate the two contractible loops in the figure on the right.}
\label{figure tunneling}
\end{figure}

\begin{figure}[htbp]
\centering
\fbox{\begin{tikzpicture}
    \draw (0, 0) node {\includegraphics[height=0.32\textwidth]{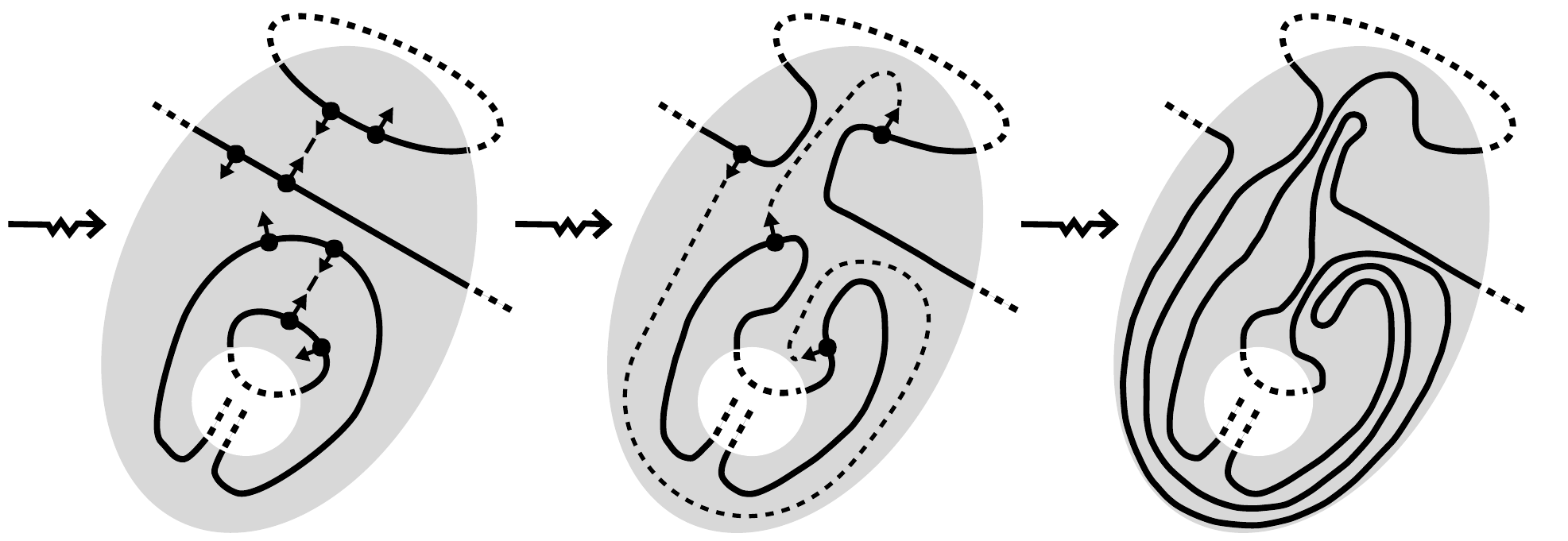}};
    \draw (-5.5, 0.67) node {$4 \times$(C)};
    \draw (-1.8, 0.67) node {$2 \times$(E)};
    \draw (2.0, 0.67) node {$2 \times$(E)};
    \draw (-3.9, -0.65) node {$C_{0}$};
    \draw (-4.1, 1.8) node {$C_{1}$};
    \draw (-3.85, -1.2) node {$\gamma_{0}$};
    \draw (-4.5, 1.4) node {$\gamma_{1}$};
\end{tikzpicture}}
\caption{Balancing.
The modification is performed as indicated on an embedded open annulus in $U$ containing $H \cup \alpha([0, 1])$.}
\label{figure balancing}
\end{figure}

\newpage
\section{Boundary Turning Invariant}\label{Boundary Turning Invariant}

Throughout this section let $P$ denote a closed $1$-dimensional manifold equipped with an orientation $\sigma$.
Furthermore, let $f$ be a boundary condition on $P$.
That is, $f \colon (-\varepsilon, \varepsilon) \times P \rightarrow \mathbb{R}^{2}$, $\varepsilon > 0$, is a fold map whose singular locus $S(f) \subset (-\varepsilon, \varepsilon) \times P$ is transverse to $\{0\} \times P$.
In particular, the set $P_{f} := S(f) \cap \{0\} \times P$ is finite.
To these data we will assign in \Cref{boundary turning invariant} an integer $\omega_{\sigma}(f)$ called the boundary turning invariant.

\begin{lemma}\label{lemma boundary data}
\begin{enumerate}[(a)]
\item There exists $\varepsilon' \in (0, \varepsilon)$ such that the projection to the first factor $(-\varepsilon', \varepsilon') \times P \rightarrow (-\varepsilon', \varepsilon')$ restricts to a local diffeomorphism $S(f) \cap (-\varepsilon', \varepsilon') \times P \rightarrow (-\varepsilon', \varepsilon')$.
\item For every component $C$ of $P$, the (possibly empty) set $C_{f} := S(f) \cap \{0\} \times C$ is finite, and has even cardinality.
\end{enumerate}
\end{lemma}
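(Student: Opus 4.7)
For part (a), my plan is to combine the transversality hypothesis with compactness of $P$. Let $K \subset S(f)$ denote the critical set of the projection $\pi\colon (-\varepsilon, \varepsilon) \times P \to (-\varepsilon, \varepsilon)$ restricted to $S(f)$, i.e., the set of points $p \in S(f)$ at which $T_p S(f)$ coincides with the fibre direction $T_p(\{\pi(p)\} \times P)$. As the critical set of a smooth map between $1$-manifolds, $K$ is closed in $S(f)$, and since $S(f)$ is closed in $(-\varepsilon, \varepsilon) \times P$, so is $K$. The transversality assumption reads precisely $K \cap (\{0\} \times P) = \emptyset$; since $\{0\} \times P$ is compact and disjoint from the closed set $K$, there exists $\varepsilon' \in (0, \varepsilon)$ with $K \cap ((-\varepsilon', \varepsilon') \times P) = \emptyset$. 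On this tubular neighbourhood of $\{0\} \times P$ the map $\pi|_{S(f)}$ has nowhere-vanishing derivative and is therefore a local diffeomorphism onto $(-\varepsilon', \varepsilon')$.

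For the finiteness half of (b) I would simply combine (a) with compactness: fixing $0 < \delta < \varepsilon'$, the set $S(f) \cap ([-\delta, \delta] \times C)$ is a compact $1$-submanifold of the compact annulus $[-\delta, \delta] \times C$ and hence has only finitely many components, each of which by (a) projects diffeomorphically to $[-\delta, \delta]$ and meets $\{0\} \times C$ transversely in a single point. Thus $|C_f|$ equals the number of these arcs, which is finite.

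The parity assertion is the real content, and my plan is a sign-change argument on the Jacobian determinant. Fixing any orientation of $C \cong S^1$ orients the annulus $(-\varepsilon, \varepsilon) \times C$, and together with the standard orientation on $\mathbb{R}^2$ this turns $J := \det(df)$ into a well-defined smooth function on $(-\varepsilon, \varepsilon) \times C$ whose zero set is $S(f) \cap ((-\varepsilon, \varepsilon) \times C)$. Since $f$ is a fold map, the Whitney normal form $(u, v) \mapsto (u, v^2)$ gives $J = 2v$ in suitable local coordinates, so $dJ$ is non-vanishing along $S(f)$ and $J$ changes sign transversely across every fold arc. The restriction $\tilde{J} := J|_{\{0\} \times C} \colon S^1 \to \mathbb{R}$ is then a smooth function whose zero set is $C_f$, and transversality of $\{0\} \times C$ to $S(f)$ upgrades the sign change of $J$ across $S(f)$ to an honest sign change of $\tilde{J}$ at each point of $C_f$. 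Since a continuous real-valued function on $S^1$ must return to the same sign after one full revolution, its number of sign changes, and therefore $|C_f|$, is even. The step that will need the most care is precisely this last transfer from ``$J$ changes sign across $S(f)$'' to ``$\tilde{J}$ changes sign at $p \in C_f$'', but it is exactly the content of the transversality hypothesis for $\{0\} \times C$.
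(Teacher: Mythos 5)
Your proof is correct. The paper states \Cref{lemma boundary data} without proof, so there is nothing to compare against; your argument for (a) (closedness of the critical set of $\pi|_{S(f)}$ plus compactness of $\{0\}\times P$) and for the parity claim in (b) (the Jacobian $\det(df)$ changes sign transversely across each fold arc, so its restriction to the circle $\{0\}\times C$ has an even number of sign changes) is exactly the standard argument one would supply. The only spot worth one extra line is the finiteness step: each component of $S(f)\cap([-\delta,\delta]\times C)$ meets $\{0\}\times C$ in at most one point because the projection is strictly monotone along it, which also rules out closed components and forces the two endpoints of each arc to lie on opposite boundary circles.
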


\begin{definition}\label{definition adapted embeddings}
Let $C$ be a component of $P$.
An embedding $\alpha \colon C \rightarrow (-\varepsilon, \varepsilon) \times C$ is called \emph{$(\sigma, f)$-adapted} if the following properties are satisfied:
\begin{enumerate}[(i)]
\item There exists $\varepsilon' \in (0, \varepsilon)$ as in \Cref{lemma boundary data}(a) such that $\alpha(C) \subset (-\varepsilon', \varepsilon') \times C$.
\item $\alpha$ is transverse to $S(f)$, and $\alpha(C) \cap S(f) = C_{f}$ (see \Cref{lemma boundary data}(b)).
\item The composition of $\alpha$ with the projection to the second factor $(-\varepsilon, \varepsilon) \times C \rightarrow C$ has degree $+1$.
\item The composition $f \circ \alpha \colon C \rightarrow \mathbb{R}^{2}$ is an immersion.
\end{enumerate}
\end{definition}

\Cref{figure adapted embeddings} illustrates the behavior of a $(\sigma, f)$-adapted embedding $\alpha \colon C \rightarrow (-\varepsilon, \varepsilon) \times C$ around a point $x \in C_{f}$.
Let $u_{x} \in T_{x} S(f)$ denote a non-zero tangent vector which points into $(0, \varepsilon) \times C$.
Let $v_{x} \in \operatorname{ker} d_{x}f$ be a non-zero vector such that the pair $(u_{x}, v_{x})$ gives the product orientation of $(-\varepsilon, \varepsilon) \times C$ at $x$.
If $w_{x} \in T_{x}(\alpha(C))$ denotes a non-zero tangent vector that gives the orientation of $\alpha(C)$ induced by the orientation $\sigma|_{C}$, then properties (i), (ii) and (iii) of \Cref{definition adapted embeddings} imply that the pair $(u_{x}, w_{x})$ is a basis of $T_{x}((-\varepsilon, \varepsilon) \times C)$ which gives the product orientation of $(-\varepsilon, \varepsilon) \times C$.
Moreover, property (iv) implies that the pair $(v_{x}, w_{x})$ is a basis of $T_{x}((-\varepsilon, \varepsilon) \times C)$.

\begin{figure}[htbp]
\centering
\fbox{\begin{tikzpicture}
\draw (0, 0) node {\includegraphics[width=0.48\textwidth]{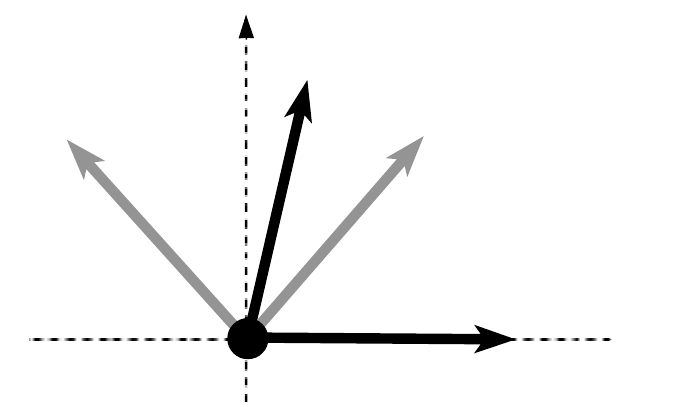}};
\draw (-1.0, -1.5) node {$x$};
\draw (-1.5, 1.7) node {$\{0\} \times C$};
\draw (2.5, -0.9) node {$S(f)$};
\draw (1.6, -1.5) node {$\boldsymbol{u_{x}}$};
\draw (-0.15, 1.3) node {$\boldsymbol{v_{x}}$};
\draw (1.1, 0.7) node {$\boldsymbol{w_{x}^{-}}$};
\draw (-1.9, 0.7) node {$\boldsymbol{w_{x}^{+}}$};
\end{tikzpicture}}
  \caption{A neighborhood in $(-\varepsilon, \varepsilon) \times C$ of a point $x \in C_{f}$.
Given a $(\sigma, f)$-adapted embedding $\alpha$, the vector $w_{x}$ looks like $w_{x}^{+}$ or $w_{x}^{-}$ depending on whether $x$ is positive or negative with respect to $\alpha$ (compare \Cref{definition positive and negative points}).
There are no requirements about the positions of $v_{x}$ and $w_{x}$ relative to $\{0\} \times C$.}
  \label{figure adapted embeddings}
\end{figure}

\begin{definition}\label{definition positive and negative points}
A point $x \in C_{f}$ is called \emph{positive} or \emph{negative} with respect to a $(\sigma, f)$-adapted embedding $\alpha \colon C \rightarrow (-\varepsilon, \varepsilon) \times C$ (compare \Cref{figure adapted embeddings}) depending on whether the pair $(v_{x}, w_{x})$ gives the product orientation of $(-\varepsilon, \varepsilon) \times C$ or its opposite orientation, respectively.
Let $C_{f}^{\alpha}$ denote the sets of points of $C_{f}$ which are negative with respect to $\alpha$.
A $(\sigma, f)$-adapted embedding $\alpha \colon C \rightarrow (-\varepsilon, \varepsilon) \times C$ is called \emph{positive} if $C_{f}^{\alpha} = \emptyset$.
\end{definition}

\begin{lemma}\label{lemma winding number}
Let $C$ be a component of $P$.
There exists a positive $(\sigma, f)$-adapted embedding $\alpha \colon C \rightarrow (-\varepsilon, \varepsilon) \times C$.
Moreover, any two such embeddings are smoothly isotopic in the space of all positive $(\sigma, f)$-adapted embeddings.
\end{lemma}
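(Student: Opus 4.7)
The plan is to establish existence by an explicit local-to-global construction and to deduce isotopy-uniqueness from the convexity of the pointwise positivity condition.

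\textbf{Existence.} First I shrink $\varepsilon$ using \Cref{lemma boundary data}(a) so that each component of $S(f) \cap ((-\varepsilon, \varepsilon) \times C)$ is the graph of a smooth function over the $t$-direction and meets $\{0\} \times C$ at a unique point of $C_{f}$. At each $x \in C_{f}$, the basis $(u_{x}, v_{x})$ of $T_{x}((-\varepsilon, \varepsilon) \times C)$ singles out the open convex cone
\[
K_{x} := \{a u_{x} + b v_{x} : a < 0,\ b > 0\},
\]
which is non-empty and disjoint from $\ker d_{x} f = \langle v_{x} \rangle$. For each $x \in C_{f}$ I choose a short smoothly embedded arc $A_{x}$ through $x$ transverse to $S(f)$, with tangent at $x$ in $K_{x}$ and with $A_{x} \cap S(f) = \{x\}$. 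I then join the arcs $A_{x}$, in the cyclic order of $C_{f}$ induced by $\sigma$ along $C$, by smooth paths lying in $((-\varepsilon, \varepsilon) \times C) \setminus S(f)$ and $C^{0}$-close to $\{0\} \times C$, to obtain an embedded circle $\alpha(C)$ whose projection to $C$ has degree $+1$. Conditions (i)--(iii) hold by construction, positivity is built in, and (iv) follows since $f$ has rank $2$ off $S(f)$ and at each $x \in C_{f}$ the chosen tangent $w_{x} \in K_{x}$ satisfies $w_{x} \notin \langle v_{x} \rangle$ (the coefficient $a$ is strictly negative).

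\textbf{Isotopy-uniqueness.} Let $\alpha_{0}, \alpha_{1}$ be two positive $(\sigma, f)$-adapted embeddings. Both pass through each $x \in C_{f}$ with tangent in the same open convex cone $K_{x}$. My strategy is first to homotope $\alpha_{0}$ and $\alpha_{1}$ near each $x \in C_{f}$, through positive adapted embeddings, so that their tangent lines at $C_{f}$ agree; the convexity of $K_{x}$ and its disjointness from $\ker d_{x} f$ permit this while maintaining (iv). Away from small neighborhoods of $C_{f}$, both resulting embeddings lie in the open surface $((-\varepsilon, \varepsilon) \times C) \setminus S(f)$, where $f$ has rank $2$ and so (iv) is automatic, and they carry matching boundary data; they are then ambient-isotopic by the standard isotopy-extension theorem for embedded arcs in surfaces, and such an isotopy preserves (i)--(iv), positivity, and the degree of the projection by homotopy-invariance.

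\textbf{Main obstacle.} The crux is the local standardization near $C_{f}$: the homotopy must keep $\alpha$ an embedding crossing $S(f)$ transversely at $x$ with tangent in $K_{x}$ and $f \circ \alpha$ an immersion throughout. Convexity of $K_{x}$ together with $K_{x} \cap \ker d_{x} f = \emptyset$ guarantees that convex combinations of admissible tangent directions remain admissible, and embeddability is preserved globally via a cut-off argument using that the complement of $S(f)$ in the shrunken collar is a disjoint union of open topological rectangles (one per strip between consecutive arcs of $S(f)$), so that any two embedded arcs in such a strip with matching endpoints are ambient isotopic relative to the boundary.
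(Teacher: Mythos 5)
The paper states \Cref{lemma winding number} without proof (it is used immediately afterwards to define $\omega_{\sigma|_{C}}$), so there is no argument of the author's to compare yours against; judged on its own terms, your proposal is essentially correct. The cone $K_{x} = \{a u_{x} + b v_{x} : a<0,\ b>0\}$ is exactly the right object: $b>0$ encodes the orientation constraint forced by \Cref{definition adapted embeddings}(i)--(iii) (the pair $(u_{x},w_{x})$ must give the product orientation), $a<0$ encodes positivity ($(v_{x},w_{x})$ gives the product orientation), and $K_{x}$ avoids both $T_{x}S(f)=\langle u_{x}\rangle$ and $\ker d_{x}f=\langle v_{x}\rangle$, which yields transversality and condition (iv) at the points of $C_{f}$; away from $C_{f}$ condition (iv) is automatic because a fold map is a local diffeomorphism off its singular locus. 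Your strip decomposition is also correct: by \Cref{lemma boundary data}(a) each component of $S(f)$ in the shrunken collar is a properly embedded arc meeting $\{0\}\times C$ in exactly one point of $C_{f}$, so cutting the annulus along these arcs produces $|C_{f}|$ discs. Two points should be made explicit in the uniqueness step. First, before the strip-by-strip ambient isotopy you should arrange not merely that the tangent lines of $\alpha_{0}$ and $\alpha_{1}$ agree at each $x\in C_{f}$, but that the two curves coincide on a neighborhood of each such $x$ (a further small isotopy through $K_{x}$-directions achieves this); only then do the remaining arcs have literally matching endpoints. Second, you should justify that the corresponding arcs of $\alpha_{0}$ and $\alpha_{1}$ lie in the \emph{same} strip: this holds because each $\alpha_{i}(C)$ meets each separating arc of $S(f)$ exactly once and is therefore parallel to the core $\{0\}\times C$, which forces both curves to traverse the points of $C_{f}$ in the same cyclic order and to join consecutive points through the strip they bound. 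With these additions the argument is complete; the same sign bookkeeping (all crossings have $b>0$ and the traversal follows $\sigma$) also substantiates the degree-$(+1)$ claim in your existence construction.
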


\Cref{lemma winding number} enables us to assign to every component $C$ of $P$ a well-defined integer $\omega_{\sigma|_{C}}(f|_{(-\varepsilon, \varepsilon) \times C})$ given by the winding number $W(f \circ \alpha)$ (i.e., the degree of the Gauss map, compare \cite{whi2}) of the immersion $f \circ \alpha \colon C \rightarrow \mathbb{R}^{2}$ for any positive $(\sigma, f)$-adapted embedding $\alpha \colon C \rightarrow (-\varepsilon, \varepsilon) \times C$.

\begin{definition}\label{boundary turning invariant}
The \emph{boundary turning invariant $\omega_{\sigma}(f)$ of $f$} is defined as
\begin{displaymath}
\omega_{\sigma}(f) := \sum_{C} \omega_{\sigma|_{C}}(f|_{(-\varepsilon, \varepsilon) \times C}) \qquad \in \mathbb{Z},
\end{displaymath}
where the sum runs through the components $C$ of $P$.
\end{definition}

\begin{example}\label{example boundary turning invariant of cusps and fold points}
The embedding $g \colon \mathbb{R} \times S^{1} \rightarrow \mathbb{R}^{2}$, $g(r, s) = e^{r} \cdot s$, defines a tubular neighborhood of the circle $S^{1} \subset \mathbb{R}^{2}$.
Let $f_{\operatorname{fold}}$ denote the composition of $g$ with the fold map $(x, y) \mapsto (x, y^{2})$, and let $f_{\operatorname{cusp}}$ denote the composition of $g$ with the stable Whitney cusp $(x, y) \mapsto (x, xy + y^{3})$.
Then, for any orientation $\sigma$ on the circle $S^{1}$, one can show that $\omega_{\sigma}(f_{\operatorname{fold}}) = 0$ and $\omega_{\sigma}(f_{\operatorname{cusp}}) = \pm 1$.
\end{example}

\begin{lemma}\label{proposition embeddings of boundary turning invariant}
Let $C$ be a component of $P$.
If $\alpha \colon C \rightarrow (-\varepsilon, \varepsilon) \times C$ is a $(\sigma, f)$-adapted embedding, then the winding number of the immersion $f \circ \alpha \colon C \rightarrow \mathbb{R}^{2}$ has the same parity as $\omega_{\sigma|_{C}}(f|_{(-\varepsilon, \varepsilon) \times C}) + |C_{f}^{\alpha}|$.
\end{lemma}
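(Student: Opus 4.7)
The plan is to connect $\alpha$ to a positive $(\sigma,f)$-adapted embedding $\alpha'$ through a one-parameter family of embeddings $(\alpha_t)_{t \in [0,1]}$ that satisfies properties (i)--(iii) of \Cref{definition adapted embeddings} throughout, but may fail (iv) at finitely many instants $t_1, \dots, t_n$. At each $t_i$ I will verify that both $W(f\circ\alpha_t)$ and $|C_f^{\alpha_t}|$ jump by an odd integer, so that the parity of $W(f\circ\alpha_t) + |C_f^{\alpha_t}|$ is preserved along the family. Since $|C_f^{\alpha'}| = 0$ and $W(f\circ\alpha') = \omega_{\sigma|_C}(f|_{(-\varepsilon,\varepsilon)\times C})$ by \Cref{lemma winding number}, this gives the desired congruence.

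To construct the family, I would enumerate $C_f^\alpha = \{x_1, \dots, x_n\}$ and fix pairwise disjoint neighborhoods $U_i$ of $x_i$ in $(-\varepsilon,\varepsilon)\times C$ in which $S(f)$ is a single fold arc meeting $\alpha(C)$ transversely at $x_i$. Working in one $U_i$ at a time, I would deform $\alpha$ by rotating the tangent direction of $\alpha(C)$ at $x_i$ continuously through the line $\ker df_{x_i} = \langle v_{x_i} \rangle$ to the opposite side of $v_{x_i}$, so that $x_i$ turns from negative to positive with respect to $\alpha_t$. The deformation is confined to $U_i$, so $\alpha_t$ remains an embedding with $\alpha_t(C) \cap S(f) = C_f$; at a single instant $t_i$ the tangent of $\alpha_{t_i}(C)$ at $x_i$ lies in $\ker df_{x_i}$, property (iv) fails there, and $|C_f^{\alpha_t}|$ drops by exactly one across $t_i$.

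The crux is to show that $W(f\circ\alpha_t)$ also jumps by an odd integer across each $t_i$. I would choose Whitney fold coordinates $(X,Y)$ near $x_i$ with $f(X,Y) = (X, Y^2)$ and $S(f) = \{Y = 0\}$, so that $v_{x_i} = \partial_Y$, and write $\alpha_t(s) = (\xi_t(s), \eta_t(s))$ with $\alpha_t(0) = x_i$. Then $(f\circ\alpha_t)'(s) = (\dot\xi_t(s),\, 2\eta_t(s)\dot\eta_t(s))$, which vanishes at $s = 0$ exactly when $\dot\xi_t(0) = 0$, and the flip amounts to $\dot\xi_t(0)$ passing through zero at $t = t_i$. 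Using $\eta_t(s) \approx \dot\eta_t(0)\, s$ near $s = 0$, a direct analysis of the continuous argument of this tangent vector shows that on a small interval around $s = 0$ it turns by $+\pi$ when $\dot\xi_t(0) > 0$ and by $-\pi$ when $\dot\xi_t(0) < 0$. The turning outside this interval is unaffected by the local deformation, so $W(f\circ\alpha_t)$ jumps by $\pm 1$ across $t_i$. The main obstacle is precisely this local tangent-turning computation: verifying that the jump is odd rather than $0$ or $\pm 2$ requires a careful continuous choice of angle on either side of the degenerate instant, together with compatibility of the chosen fold coordinates with the product orientation of $(-\varepsilon,\varepsilon) \times C$. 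All other steps -- localizing the deformation inside the $U_i$, preserving embeddedness and the intersection pattern with $S(f)$, and identifying the endpoint winding number via \Cref{lemma winding number} -- are routine.
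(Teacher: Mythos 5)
Your proposal is correct, and its overall strategy coincides with the paper's: reduce to a positive $(\sigma,f)$-adapted embedding by flipping the negative points of $C_f^{\alpha}$ one at a time, checking that each flip changes the winding number of the composition with $f$ by $\pm 1$, and then invoke \Cref{lemma winding number} to identify the endpoint with $\omega_{\sigma|_{C}}(f|_{(-\varepsilon,\varepsilon)\times C})$. The implementation of the key step differs, however. The paper never lets $f\circ\gamma$ degenerate: it works inside the space $\mathfrak{X}$ of maps $\gamma$ with $f\circ\gamma$ an immersion, where the winding number is a homotopy invariant, and realizes the sign flip by inserting two loops based at $x$ in the domain (\Cref{figure deformation}), an explicit non-homotopic modification whose effect on the winding number is $\pm 1$ by inspection. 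You instead cross the discriminant: your family of embeddings leaves $\mathfrak{X}$ at one instant per flip, when the tangent at $x_i$ passes through $\ker df_{x_i}$, and you compute the resulting jump of the winding number directly in Whitney fold coordinates $(X,Y)\mapsto(X,Y^2)$. Your local turning computation is right -- for fixed small $\delta$ the tangent of $f\circ\alpha_t$ turns by approximately $+\pi$ over $[-\delta,\delta]$ when $\dot\xi_t(0)>0$ and by $-\pi$ when $\dot\xi_t(0)<0$, while the contribution of the complement varies continuously -- so the jump is $\pm 1$; and since only parity is claimed, the orientation compatibility of the chosen fold coordinates, which you flag as the delicate point, is actually immaterial. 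The two arguments are essentially dual (the paper's loop insertion is the resolution, within $\mathfrak{X}$, of your wall crossing); yours is more computational and self-contained, the paper's more pictorial and avoids any degeneration of the Gauss map.
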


\begin{proof}
Given $x \in C_{f}^{\alpha}$, we show how to modify $\alpha$ in a small neighborhood of $\alpha^{-1}(x)$ to obtain a $(\sigma, f)$-adapted embedding $\beta \colon C \rightarrow (-\varepsilon, \varepsilon) \times C$ such that $x \notin C_{f}^{\beta}$, and such that the winding numbers of the immersions $f \circ \alpha$ and $f \circ \beta$ differ by $\pm 1$.

Let $\mathfrak{X}$ denote the space (equipped with the Whitney $C^{\infty}$ topology) of all smooth maps $\gamma \colon C \rightarrow (-\varepsilon, \varepsilon) \times C$ for which the composition $f \circ \gamma$ is an immersion.
Note that the winding number of $f \circ \gamma$ is invariant under homotopies of $\gamma$ in $\mathfrak{X}$.

The given $(\sigma, f)$-adapted embedding $\alpha$ is homotopic in $\mathfrak{X}$ to the embedding $\alpha_{1} \colon C \rightarrow (-\varepsilon, \varepsilon) \times C$ indicated in \Cref{figure deformation}(a) by means of a homotopy supported in a small neighborhood of $x$.
Then, the winding numbers of the immersions $f \circ \alpha$ and $f \circ \alpha_{1}$ agree.
Let $\alpha_{2} \colon C \rightarrow (-\varepsilon, \varepsilon) \times C$ denote an immersion which arises from $\alpha_{1}$ by inserting two loops based at $x$ as indicated in \Cref{figure deformation}(b).
Then, the winding numbers of the immersions $f \circ \alpha_{1}$ and $f \circ \alpha_{2}$ differ by $\pm 1$.
Finally, $\alpha_{2}$ is homotopic in $\mathfrak{X}$ to the desired embedding $\beta \colon C \rightarrow (-\varepsilon, \varepsilon) \times C$ (see \Cref{figure deformation}(c)).
\end{proof}

\begin{figure}[htbp]
\centering
\fbox{\begin{tikzpicture}
\draw (0, 0) node {\includegraphics[width=0.28\textwidth]{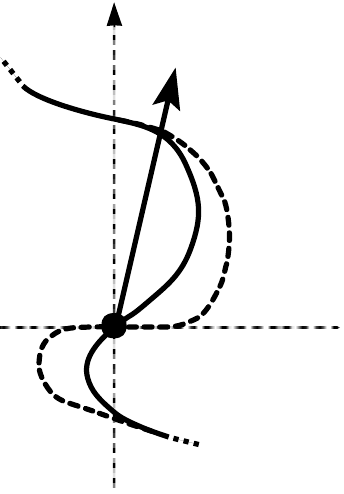}};
\draw (-0.8, -0.6) node {$x$};
\draw (0.2, 2.1) node {$\boldsymbol{v_{x}}$};
\draw (-1.4, 2.3) node {$\{0\} \times C$};
\draw (1.5, -0.5) node {$S(f)$};
\draw (0.9, 0.4) node {$\alpha_{1}$};
\draw (-1.2, 1.25) node {$\alpha$};
\draw (1.55, 2.3) node {(a)};
\end{tikzpicture}}
\fbox{\begin{tikzpicture}
\draw (0, 0) node {\includegraphics[width=0.28\textwidth]{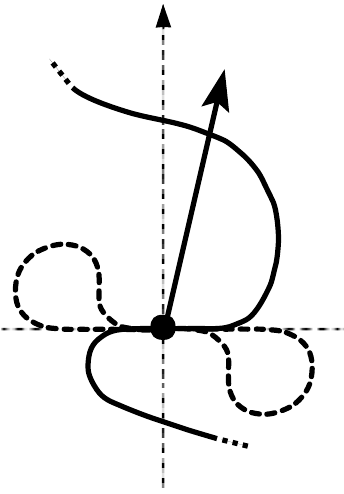}};
\draw (1.4, 0.4) node {$\alpha_{1}$};
\draw (-0.9, 0.2) node {$\alpha_{2}$};
\draw (1.55, 2.3) node {(b)};
\end{tikzpicture}}
\fbox{\begin{tikzpicture}
\draw (0, 0) node {\includegraphics[width=0.28\textwidth]{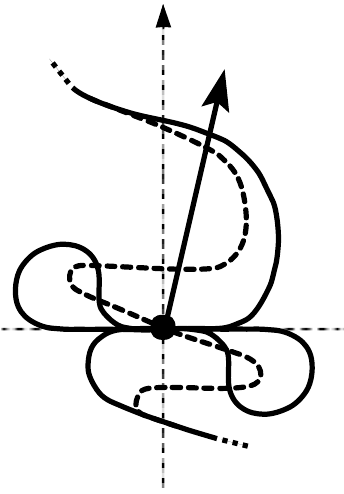}};
\draw (-0.9, 0.2) node {$\alpha_{2}$};
\draw (0.5, 0.2) node {$\beta$};
\draw (1.55, 2.3) node {(c)};
\end{tikzpicture}}
  \caption{Modification of $\alpha$ in a small neighborhood of $\alpha^{-1}(x)$.}
  \label{figure deformation}
\end{figure}

\begin{proposition}\label{corollary independence of parity of boundary turning invariant of orientation}
If $\rho$ is any orientation of $P$, then $\omega_{\sigma}(f) \equiv \omega_{\rho}(f) \, \operatorname{mod} 2$.
\end{proposition}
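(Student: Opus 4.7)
The plan is to reduce the statement to a single component at a time. Since
\[
\omega_{\sigma}(f) = \sum_{C} \omega_{\sigma|_C}\!\left(f|_{(-\varepsilon,\varepsilon)\times C}\right),
\]
and any two orientations of $P$ differ only by reversal on some subset of the components, it suffices to show that for any one component $C$ of $P$ one has $\omega_{\sigma|_C}(f|_{(-\varepsilon,\varepsilon)\times C}) \equiv \omega_{-\sigma|_C}(f|_{(-\varepsilon,\varepsilon)\times C}) \pmod{2}$. Fix such a $C$, and pick a \emph{positive} $(\sigma,f)$-adapted embedding $\alpha \colon C \to (-\varepsilon,\varepsilon)\times C$, whose existence is granted by \Cref{lemma winding number}.

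The next step is to observe that the very same $\alpha$ is also $(-\sigma,f)$-adapted. Conditions (i), (ii), (iv) of \Cref{definition adapted embeddings} do not refer to $\sigma$, while condition (iii) — that $\alpha$ followed by projection to the second factor has degree $+1$ — is invariant under simultaneously reversing the orientation of source and target. With respect to $-\sigma$, however, $\alpha$ need not stay positive: in the notation of \Cref{definition positive and negative points} the vector $w_{x}$ is the only ingredient depending on the orientation of $C$, so reversing $\sigma|_C$ flips $w_{x}$ at each $x \in C_f$ and thereby exchanges the sets of positive and negative points. Since $\alpha$ is positive with respect to $\sigma$, it follows that $C_f^{\alpha}$ computed with respect to $-\sigma$ equals all of $C_f$.

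Now apply \Cref{proposition embeddings of boundary turning invariant} twice to the embedding $\alpha$: once regarding it as $(\sigma,f)$-adapted and once as $(-\sigma,f)$-adapted. This yields
\[
W(f\circ\alpha) \equiv \omega_{\sigma|_C}\!\left(f|_{(-\varepsilon,\varepsilon)\times C}\right) \pmod 2
\]
and
\[
W(f\circ\alpha) \equiv \omega_{-\sigma|_C}\!\left(f|_{(-\varepsilon,\varepsilon)\times C}\right) + |C_f| \pmod 2.
\]
Subtracting, and noting that $|C_f|$ is even by \Cref{lemma boundary data}(b), gives the desired congruence on $C$. Summing over all components completes the proof.

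I do not anticipate a serious obstacle. The only delicate point is checking that $(\sigma,f)$-adaptedness is stable under reversing $\sigma$ and tracking how the parity correction $|C_f^\alpha|$ behaves under this reversal; both of these are immediate from the definitions, and the evenness of $|C_f|$ is precisely what is needed to make the two parity formulas agree.
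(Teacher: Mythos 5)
Your argument is correct and takes essentially the same route as the paper: reduce to one component at a time, start from a positive $(\sigma,f)$-adapted embedding $\alpha$, observe that with respect to $-\sigma$ every point of $C_{f}$ becomes negative, and conclude from \Cref{proposition embeddings of boundary turning invariant} together with the evenness of $|C_{f}|$ from \Cref{lemma boundary data}(b). The only (cosmetic) difference is that the paper replaces $\alpha$ by $\beta = \alpha \circ \iota$ for an orientation-reversing automorphism $\iota$ of $C$ so that the degree condition of \Cref{definition adapted embeddings}(iii) is met under its convention, a reparametrization that changes nothing modulo $2$.
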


\begin{proof}
It suffices to show that $\omega_{\sigma|_{C}}(f|_{(-\varepsilon, \varepsilon) \times C}) \equiv \omega_{-\sigma|_{C}}(f|_{(-\varepsilon, \varepsilon) \times C}) \, \operatorname{mod} 2$ for every component $C$ of $P$.
For this purpose, note that if $\alpha \colon C \rightarrow (-\varepsilon, \varepsilon) \times C$ is a positive $(\sigma, f)$-adapted embedding, then $\beta := \alpha \circ \iota$ is a $(-\sigma, f)$-adapted embedding such that $C_{f}^{\beta} = C_{f}$, where $\iota \colon C \rightarrow C$ denotes an orientation reversing automorphism.
Hence, the claim follows from \Cref{proposition embeddings of boundary turning invariant} because $C_{f}$ has even cardinality by \Cref{lemma boundary data}(b).
\end{proof}

\newpage
\section{Number of Cusps of Generic Extensions}\label{Number of Cusps of Generic Extensions}

Let $f$ be a boundary condition on $P = \partial W$.
By an \emph{extension of $f$ to $W$} we mean in the following a generic map $F \colon W \rightarrow \mathbb{R}^{2}$ such that $F|_{[0, \varepsilon') \times \partial W} =  f|_{[0, \varepsilon') \times \partial W}$ for some $\varepsilon' \in (0, \varepsilon)$.
Note that any $f$ admits an extension, and any two extensions have the same parity of number of cusps.
The purpose of this section is to prepare the ingredients needed to derive a formula for this parity in terms of the boundary turning invariant (see \Cref{proposition parity of number of cusps of realizations}).

\begin{definition}\label{definition orientation preserving part of W}
Suppose that $W$ is oriented, and let $\sigma$ be the induced orientation on the boundary $P = \partial W$.
For any extension $F \colon W \rightarrow \mathbb{R}^{2}$ of $f$ let $W_{F}^{\sigma}$ denote the closure in $W$ of the union of components $U$ of $W \setminus S(F)$ for which the local diffeomorphism $F|_{U} \colon U \rightarrow \mathbb{R}^{2}$ is orientation preserving.
\end{definition}

Under the assumptions of \Cref{definition orientation preserving part of W}, note that $W_{F}^{\sigma}$ is a compact $2$-dimensional submanifold of $W$ with corners, and the set of corners is $P_{f} = S(f) \cap P$.
Moreover, we have $W = W_{F}^{\sigma} \cup W_{F}^{-\sigma}$ and $\partial W_{F}^{\sigma} \cap \partial W_{F}^{-\sigma} = S(F) = W_{F}^{\sigma} \cap W_{F}^{-\sigma}$.
Note that every component of $S(F)$ is contained in the boundary of exactly two components of $W \setminus S(F)$.
One of these components belongs to $W_{F}^{\sigma}$, and the other component belongs to $W_{F}^{-\sigma}$.

The following proposition relates the boundary turning invariant as introduced in \Cref{Boundary Turning Invariant} to the Euler characteristic.

\begin{proposition}\label{proposition boundary turning invariant and Euler characteristic}
Suppose that $W$ is oriented, and let $\sigma$ be the induced orientation on $P = \partial W$.
Then, for any extension $F \colon W \rightarrow \mathbb{R}^{2}$ of $f$ without cusps, we have
$$
\chi(W_{F}^{\sigma}) - \chi(W_{F}^{-\sigma}) = \omega_{\sigma}(f).
$$
Consequently, it follows from $\chi(W_{F}^{\sigma}) + \chi(W_{F}^{-\sigma}) = \chi(W) + |P_{f}|/2$ that
$$
\chi(W_{F}^{\sigma}) = (\chi(W) + |P_{f}|/2 +\omega_{\sigma}(f))/2.
$$
\end{proposition}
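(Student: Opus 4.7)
My plan is to apply a Whitney rotation-number (Gauss--Bonnet) argument to both of the oriented subsurfaces $W_F^\sigma$ and $W_F^{-\sigma}$ (each regarded as an oriented compact surface with piecewise-smooth boundary and corners at $P_f$), take the difference of the two identities, and match the resulting expression with $\omega_\sigma(f)$ via the definition of the latter by positive $(\sigma,f)$-adapted embeddings.

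First, to justify applying Whitney's formula $\chi(\Sigma)=\sum_j \mathrm{Rot}(F|_{\gamma_j})$ despite $dF$ degenerating transversely along $S(F)$, one can perturb $F$ to $F_\delta$ whose restriction to $W_F^\sigma$ is a genuine immersion (e.g.\ by displacing the fold line infinitesimally into $W_F^{-\sigma}$), apply Whitney to $F_\delta|_{W_F^\sigma}$, and pass to the limit $\delta\to 0$. Each boundary component of $W_F^\sigma$ is then a piecewise-smooth closed curve made of arcs on $\partial W$ (with the orientation $\sigma$) and arcs on $S(F)$ (with the canonical orientation of \Cref{definition canonical orientation}), joined at corners at $P_f$; the analogous identity for $\chi(W_F^{-\sigma})$ has the $S(F)$-arcs with the reversed orientation.

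Subtracting the two identities, the $S(F)$-arc turnings \emph{add} (canonical minus anti-canonical equals twice canonical), the $\partial W$-arc turnings assemble into a signed sum over $\partial W$, and the corner exterior angles produce a correction $E^+-E^-$. To identify the resulting expression with $\omega_\sigma(f)=\sum_C W(f\circ\alpha_C)$, I would interpret each positive $(\sigma,f)$-adapted $\alpha_C$ as a curve approximating $C$ with small local detours near each $x\in C_f$ enforcing the positivity condition of \Cref{definition positive and negative points}; a local analysis at each such $x$ via the relative winding number for arcs in the plane (as hinted in the introduction) should show that each detour's contribution to $W(f\circ\alpha_C)$ matches exactly the combination of canonical-orientation $S(F)$-turning at $x$ and the corner-angle correction $(\eta_x^+-\eta_x^-)/2\pi$ arising in the Gauss--Bonnet difference.

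The main obstacle is this careful local bookkeeping at each $x\in P_f$: a direct term-by-term matching of boundary components of $W_F^{\pm\sigma}$ to adapted embeddings $\alpha_C$ fails (as simple examples like a fold on the annulus already show, where a loop of $S(F)$ has nontrivial canonical winding but contributes oppositely to the two Euler characteristics), so one must deploy the relative winding number for arcs as a global bookkeeping device that reorganizes the signed sums correctly through the sign conventions of \Cref{definition canonical orientation} and \Cref{definition positive and negative points}. Once this local-to-global matching is established, the corollary formula for $\chi(W_F^\sigma)$ follows immediately from the inclusion-exclusion identity $\chi(W_F^\sigma)+\chi(W_F^{-\sigma})=\chi(W)+|P_f|/2$ already noted in the preamble of the proposition.
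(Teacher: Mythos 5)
Your overall strategy (Whitney--Haefliger rotation-number formula applied to the pieces $W_F^{\pm\sigma}$, corner smoothing, generalized winding numbers for arcs, local analysis at $P_f$) is the same as the paper's, but the central bookkeeping step contains a sign error that derails the argument. On $W_F^{-\sigma}$ the map $F$ is \emph{orientation-reversing} with respect to the orientation inherited from $W$, so the rotation-number identity there reads $\chi(V) = -W(F|_{\partial \widetilde V})$ with the boundary orientation induced from $W$; equivalently $\chi(V) = +W(F|_{\partial \widetilde V})$ after reversing the orientation of $V$. Under the latter choice (the only one for which the formula holds with a plus sign), the $S(F)$-arcs of $\partial V$ carry the \emph{canonical} orientation of \Cref{definition canonical orientation} --- the same orientation they carry as boundary arcs of $W_F^{\sigma}$ --- while the $\partial W$-arcs carry $-\sigma$. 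Your identity (``$\partial W$-arcs with $\sigma$, $S(F)$-arcs reversed, with a plus sign'') is termwise the negative of the correct one, hence false whenever a component of $W_F^{-\sigma}$ has nonzero Euler characteristic.

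As a consequence, in the difference $\chi(W_F^\sigma)-\chi(W_F^{-\sigma})$ the interior $S(F)$-contributions \emph{cancel}; they do not add to twice the canonical turning. This is not a cosmetic point: a residual term $2\sum W_{\mathrm{can}}$ over the interior components of $S(F)$ is not determined by the germ of $F$ along $\partial W$, so it could never be absorbed into $\omega_\sigma(f)$, and the ``local-to-global matching'' you defer to the end cannot be carried out. Your own test case already shows this: for the folded annulus $F(s,t)=(1+t^2)s$, the core fold circle has canonical winding $-1$ and contributes $-1$ to \emph{each} of $\chi(W_F^{\sigma})=0$ and $\chi(W_F^{-\sigma})=0$ (via the correctly oriented identities), hence $0$ to the difference, not $-2$. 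The paper's proof rests precisely on this cancellation: after it, only contributions supported in the collar $[0,\varepsilon')\times P$ survive (the $\partial W$-arcs, now all appearing with a uniform sign, together with the corner corrections), and these are identified with $\omega_\sigma(f)$ by comparing $\partial\widetilde V$ near the corners with a positive $(\sigma,f)$-adapted embedding, in the spirit of \Cref{proposition embeddings of boundary turning invariant}. To repair your argument, fix the orientation conventions on $W_F^{-\sigma}$ as above; the rest of your outline then collapses onto the paper's proof.
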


\begin{proof}
Let $V$ be a component of $W_{F}^{\sigma}$.
If $V \cap \partial W = \emptyset$, then $V \subset \operatorname{int} W$ is a submanifold with boundary $V \cap S(F)$, and we set $\widetilde{V} = V$.
If, however, $V \cap \partial W \neq \emptyset$, then $V \subset W$ is a submanifold with corners, and the set of corners is given by $V \cap P_{f}$.
In this case, we define a smooth submanifold with boundary $\widetilde{V} \subset V$ by smoothly cutting off the corners of $V$ in sufficiently small neighborhoods of the corner points as indicated in \Cref{figure smoothly cutting corners}.

\begin{figure}[htbp]
  \centering
\fbox{\begin{tikzpicture}
\draw (0, 0) node {\includegraphics[width=0.6\textwidth]{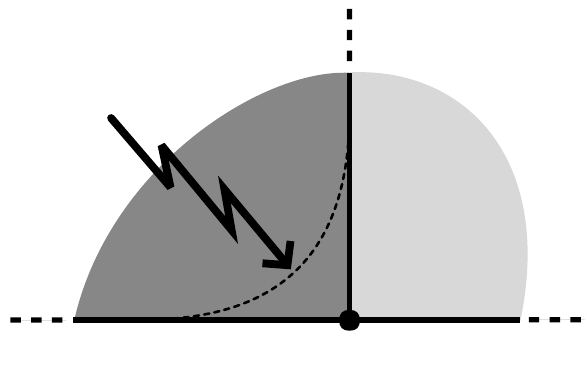}};
\draw (0.7, -2.1) node {$x$};
\draw (-2.6, 1.3) node {$\partial \widetilde{V}$};
\draw (-3.1, -2.0) node {$P = \partial W$};
\draw (0.2, 2.1) node {$S(f)$};
\end{tikzpicture}}
\caption{A neighborhood (grey) in $[0, \varepsilon') \times P$ of a corner point $x \in V \cap P_{f}$ of a component $V$ of $W_{F}^{\sigma}$ (dark grey) for sufficiently small $\varepsilon' \in (0, \varepsilon)$.
The boundary of $\widetilde{V}$ deviates from that of $V$ as indicated only in small neighborhoods of the corners of $V$.}
\label{figure smoothly cutting corners}
\end{figure}

In any case, a theorem of Haefliger \cite{hae} implies that the winding number of the immersion $F|_{\partial \widetilde{V}} \colon \partial \widetilde{V} \rightarrow \mathbb{R}^{2}$ equals $\chi(V)$, where $\partial \widetilde{V}$ is equipped with the orientation induced by that of $\widetilde{V} \subset W$.
Analogously, while keeping the same orientation on $W$, we can identify the Euler characteristic $\chi(V)$ of any component $V$ of $W_{F}^{-\sigma}$ with the negative of the winding number of the immersion $F|_{\partial \widetilde{V}} \colon \partial \widetilde{V} \rightarrow \mathbb{R}^{2}$, where $\widetilde{V} \subset V$ is a suitably chosen smooth submanifold with boundary.
Hence,
$$
\chi(W_{F}^{\sigma}) - \chi(W_{F}^{-\sigma}) = \sum_{V \subset W_{F}^{\sigma} \sqcup W_{F}^{-\sigma}} \text{winding number of } \partial \widetilde{V},
$$
where the sum runs through all components of $W_{F}^{\sigma} \sqcup W_{F}^{-\sigma}$.
We observe that the contributions to the sum of winding numbers by adjacent components will cancel each other along common parts of their boundaries.
Thus, only contributions of boundary parts contained in a neighborhood of $[0, \varepsilon') \times P$ for sufficiently small $\varepsilon' \in (0, \varepsilon)$ remain (compare \Cref{figure smoothly cutting corners}).
(Our argument requires an extension of the concept of winding number to immersions of arcs into $\mathbb{R}^{2}$.
This generalized winding number for immersions arbitrary compact $1$-manifolds turns out to behave additively under disjoint union and gluing of $1$-manifolds along boundaries.
By definition, the winding number of an immersion $\gamma \colon [a, b] \rightarrow \mathbb{R}^{2}$ is the real number $\lambda(b) - \lambda(a)$, where $\lambda \colon [a, b] \rightarrow \mathbb{R}$ denotes a lift of the Gauss map $[a, b] \rightarrow S^{1}$ of $\gamma$ with respect to the universal cover $\mathbb{R} \rightarrow S^{1}$, $t \mapsto e^{2 \pi t}$.
We refer to Chapter 1 of \cite{wra2} for details in the context of positive TFTs.)
Finally, an argument in the spirit of the proof of \Cref{proposition embeddings of boundary turning invariant} identifies the resulting sum of (generalized) winding numbers with $\omega_{\sigma}(f)$.
\end{proof}

\section{Patterns and their Realizations}

\begin{definition}\label{pattern}
A \emph{(singular) pattern on $W$} is a pattern $(f, \varphi)$ consisting of
\begin{itemize}
\item a fold map $f \colon (-\varepsilon, \varepsilon) \times P \rightarrow \mathbb{R}^{2}$ for some $\varepsilon > 0$ whose singular locus $S(f) \subset (-\varepsilon, \varepsilon) \times P$
(which is well-known to be a $1$-dimensional submanifold) is transverse to $\{0\} \times P$, and
\item a fixed-point free involution $\varphi \colon P_{f} \rightarrow P_{f}$ of the finite set $P_{f} := S(f) \cap (\{0\} \times P)$.
(Note that $\varphi$ can equivalently be considered as a partition of $P_{f}$ into subsets of cardinality $2$.)
\end{itemize}
\end{definition}

\begin{definition}\label{definition realizations}
A $1$-dimensional submanifold $S \subset W$ satisfying $S \pitchfork \partial W$ and $\partial S = S \cap \partial W$ is said to be \emph{adapted to a pattern $(f, \varphi)$} if for every $x \in P_{f}$ there exists a component $S_{x}$ of $S$ with boundary $\partial S_{x} = \{x, \varphi(x)\}$.
By a \emph{realization} of the pattern $(f, \varphi)$ we mean a generic extension $F \colon W \rightarrow \mathbb{R}^{2}$ of $f|_{[0, \varepsilon') \times \partial W}$ for some $\varepsilon' \in (0, \varepsilon)$ such that the singular locus $S(F)$ is non-empty and adapted to $\varphi$.
\end{definition}

\begin{remark}\label{remark singular locus not empty}
Requiring that $S(F) \neq \emptyset$ for realizations $F$ of the pattern $(f, \varphi)$ excludes the case that $F$ is an immersion.
The extension problem for stable immersions $\partial W \rightarrow \mathbb{R}^{2}$ to immersions $W \rightarrow \mathbb{R}^{2}$ has been chracterized combinatorially in \cite{fra2}.
\end{remark}

Fix a pattern $(f, \varphi)$.
A map $\iota \colon P_{f} \rightarrow \{-1, +1\}$ is defined by assigning to every point $x \in P_{f}$ a positive or a negative sign according to whether a non-zero tangent vector $u_{x} \in T_{x} S(f)$ which points into $(0, \varepsilon) \times \partial W$ determines the canonical orientation of $S(F)$ at $x$ (see \Cref{definition canonical orientation}) or its opposite orientation.
If $F$ is a realization of $(f, \varphi)$, then $S(F)$ is adapted to $\varphi$, and $\iota(x) \neq \iota(\varphi(x))$ for all $x \in P_{f}$.

\begin{proposition}\label{proposition existence of realizations}
The pattern $(f, \varphi)$ admits a realization if and only if
\begin{enumerate}[(i)]
\item $\iota(x) \neq \iota(\varphi(x))$ for all $x \in P_{f}$, and
\item there exists a submanifold $S \subset W$ which is adapted to $(f, \varphi)$.
\end{enumerate}
\end{proposition}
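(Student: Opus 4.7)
The plan is to prove the two directions separately; the forward direction is essentially a restatement of the comments just before the proposition, while the backward direction requires a geometric construction of a realization.

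First I would handle necessity. If $F$ is a realization, then $S(F)$ itself is by definition adapted to $\varphi$, which immediately yields (ii). For (i), fix $x \in P_{f}$ and let $S_{x} \subset S(F)$ be the arc component with $\partial S_{x} = \{x, \varphi(x)\}$. The canonical orientation of \Cref{definition canonical orientation} on $S_{x}$ is a single consistent tangent direction along the arc, which necessarily points into $W$ at one endpoint and out of $W$ at the other. Comparing with the inward-pointing vectors $u_{x}, u_{\varphi(x)}$ used in the definition of $\iota$ then forces $\iota(x) = -\iota(\varphi(x))$.

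For sufficiency, take an adapted submanifold $S \subset W$ supplied by (ii) and construct a realization in three steps. (Step 1) Orient $S$: on each arc component $S_{x}$, condition (i) permits a coherent orientation whose tangent points into $W$ at $x$ and out of $W$ at $\varphi(x)$, matching the canonical orientation already fixed by $f$ at both endpoints; loop components are oriented arbitrarily. (Step 2) On a tubular neighborhood $\nu(S) \subset W$, build a fold map with fold locus exactly $S$ by the standard normal form $(t, y) \mapsto \gamma(t) + y^{2} N(t)$, where $\gamma$ is an immersion of $S$ into $\mathbb{R}^{2}$ and $N$ a unit normal to $\gamma$, with the analogous construction on loop components of $S$ whose normal bundle is non-trivial; thanks to Step 1, this fold model can be arranged to coincide with $f|_{[0, \varepsilon') \times \partial W}$ for some $\varepsilon' \in (0, \varepsilon)$. (Step 3) Extend $F$ smoothly from $\nu(S) \cup ([0, \varepsilon') \times \partial W)$ to all of $W$ and then perturb the extension generically in the interior, keeping the previously built data fixed. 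Any singularities newly introduced by the perturbation lie in $W \setminus \partial W$, so they form closed loops (possibly with cusps) and $S(F)$ remains adapted to $\varphi$. In the degenerate case $P_{f} = \emptyset$ where $S = \emptyset$ is adapted, apply \Cref{lemma new fold loops}(i) to arrange $S(F) \neq \emptyset$.

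The hardest step will be Step 2, threading the local fold models into a globally consistent fold extension of $f$ along every arc of $S$. Condition (i) is precisely what makes this possible: without the sign mismatch $\iota(x) \neq \iota(\varphi(x))$, the two endpoints of an arc $S_{x}$ would impose incompatible canonical orientations, and no global fold extension of $f$ along $S_{x}$ could exist. Steps 1 and 3 reduce to orientation bookkeeping and a standard transversality/density argument, respectively.
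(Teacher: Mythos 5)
Your proof is essentially correct but proceeds by a genuinely different route from the paper's. The paper only argues sufficiency (treating necessity as immediate from the remarks preceding the proposition, as you also do), and it does so by first taking an \emph{arbitrary} generic extension $F'$ of $f$ -- which realizes some possibly wrong pattern $(f,\varphi')$ -- and then rewiring the singular locus: one (C) move near each $x\in P_f$ creates cusp pairs, and general cusp elimination (\Cref{lemma generalized cusp elimination}) along paths joining the cusp near $x$ to the cusp near $\varphi(x)$ splices the fold arcs so that a component with boundary $\{x,\varphi(x)\}$ appears; condition (i) guarantees the elimination is admissible, and condition (ii) guarantees the joining paths can be chosen pairwise disjoint. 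You instead build the realization \emph{from} the adapted submanifold $S$: orient $S$ using (i), install a fold structure along $S$, and extend generically rel a neighborhood of $S\cup\partial W$. Your approach is closer in spirit to classical constructions of fold maps with prescribed singular locus and makes the role of (ii) more transparent (it literally \emph{is} the singular locus), whereas the paper's approach has the advantage of staying entirely within the (E)/(C) calculus that drives the rest of the paper -- indeed the proof of \Cref{MAIN THEOREM 1}(b) reuses this rewiring argument verbatim.

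Two points in your sketch need more care. First, in Step 2 the explicit normal form $(t,y)\mapsto\gamma(t)+y^2N(t)$ will not literally \emph{coincide} with $f$ on the collar (only up to right--left equivalence); the correct formulation is to keep $f$ itself on $[0,\varepsilon')\times\partial W$ and extend its fold germ along the remainder of each arc $S_x$, which is possible precisely because (i) lets the canonical orientation (and hence the folding side) be propagated consistently from $x$ to $\varphi(x)$. Second, your fix for the case $P_f=\emptyset$ invokes \Cref{lemma new fold loops}(i), which presupposes an existing fold point; if the generic extension happens to be an immersion, that lemma does not apply, and one must instead create a fold circle from scratch by a local modification on a disc (e.g., a birth of a pair of fold circles). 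Neither issue affects the overall validity of the strategy.
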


\begin{proof}
It suffices to show that a realization $F$ of $(f, \varphi)$ exists under the given conditions.
First, we produce a realization $F'$ of some pattern $(f, \varphi')$ by extending the germ of $f$ at $\partial W$ generically over $W$ in an arbitrary way.
Then, for every point $x \in P_{f}$, we use (C) once to create a new pair of cusps sufficiently close to $x$ on $S(F) \setminus \partial W$.
Next, for every pair $\{x, y\} \subset P_{f}$ of the partition $\varphi$ we eliminate the cusp of $F$ which is closest to $x$ and the cusp of $F$ which is closest to $y$ by applying general cusp elimination (\Cref{lemma generalized cusp elimination}) along a path in $W \setminus \partial W$ that connects these two cusps, and whose interior intersects $S(F)$ transversely and only in fold points of $F$.
As a result, in view of assumption (i), the singular locus of the modified generic map $\widetilde{F}$ will contain a component with boundary $\{x, y\}$, and which contains no fold points.
Finally, note that assumption (ii) makes sure that we can perform all these cusp eliminations independently of each other along pairwise disjoint paths in $W \setminus \partial W$.

\end{proof}

\section{Proof of \Cref{MAIN THEOREM 1}}\label{proof of main theorem 1}

Part (a) follows immediately from the following

\begin{proposition}\label{proposition parity of number of cusps of realizations}
Given an orientation $\sigma$ of $P = \partial W$, the number of cusps of any realization $F \colon W \rightarrow \mathbb{R}^{2}$ of a given pattern $(f, \varphi)$ on $W$ has the same parity as the quantity
\begin{displaymath}
\Gamma^{\sigma} := \chi(W) + |P_{f}|/2 + \omega_{\sigma}(f).
\end{displaymath}
\end{proposition}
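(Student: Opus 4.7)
The plan is to treat the orientable case by extending \Cref{proposition boundary turning invariant and Euler characteristic} to account for cusps, and then to reduce the non-orientable case to the orientable one by cutting along one-sided curves.

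\textbf{Orientable case.} Fix an orientation of $W$ inducing $\sigma$ on $P$. The identity $\chi(W_F^\sigma) + \chi(W_F^{-\sigma}) = \chi(W) + |P_f|/2$ from \Cref{proposition boundary turning invariant and Euler characteristic} continues to hold with cusps present, since cusps are smooth points of $S(F)$ and the identity is pure inclusion--exclusion applied to $W = W_F^\sigma \cup W_F^{-\sigma}$ with intersection $S(F)$. The key step is to generalize the companion identity to
\begin{displaymath}
\chi(W_F^\sigma) - \chi(W_F^{-\sigma}) = \omega_\sigma(f) + \sum_{c} \epsilon_c, \qquad \epsilon_c \in \{\pm 1\},
\end{displaymath}
where the sum runs over the cusps $c$ of $F$. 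Following the proof of \Cref{proposition boundary turning invariant and Euler characteristic}, for each component $V$ of $W_F^{\pm\sigma}$ and each cusp $c \in \partial V$ I smooth the boundary $\partial V$ near $c$ by deviating slightly into the interior of $V$, producing a submanifold with boundary $\widetilde V \subset V$ with $\chi(\widetilde V) = \chi(V)$ on which $F$ is an immersion. Haefliger's theorem then applies to each $\widetilde V$, and summing over all components, the contributions along the smooth parts of $S(F)$ cancel between adjacent components, while those along $\partial W$ add up to $\omega_\sigma(f)$, exactly as in the cusp-free case. The only new contributions come from pairs of smoothing arcs at each cusp. In the standard cusp model $F(x, y) = (x, xy + y^3)$ with $S(F) = \{x + 3y^2 = 0\}$, choosing $\alpha^{\pm\sigma}(y) = (-3y^2 \pm \delta h(y), y)$ for a symmetric bump function $h$, a direct tangent computation shows that the image of each smoothing arc rotates its tangent by $-\pi$ along its traversal as part of $\partial \widetilde V$ (respectively $\partial \widetilde{V'}$), so together they contribute $-1$ to the winding-number sum; for a cusp of the opposite local orientation one obtains $+1$. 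Either way, $\epsilon_c$ is odd. Adding the two identities gives $2\chi(W_F^\sigma) = \Gamma^\sigma + \sum_c \epsilon_c$, and reducing modulo $2$ yields $n \equiv \sum_c \epsilon_c \equiv \Gamma^\sigma \pmod 2$, where $n$ denotes the number of cusps of $F$.

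\textbf{Non-orientable case.} If $W$ is non-orientable, I reduce to the orientable case by cutting. Choose a pairwise disjoint collection of one-sided simple closed curves $\gamma_1, \dots, \gamma_k \subset W \setminus (\partial W \cup S(F))$ (obtainable by successively halving the non-orientable genus, combined with a dimension count to avoid the $1$-dimensional set $S(F)$) such that the resulting cut surface $\hat W$ is orientable. Cutting along a circle preserves Euler characteristic, so $\chi(\hat W) = \chi(W)$, and the number of cusps of $F$ viewed on $\hat W$ is unchanged. Each new boundary component $\hat \gamma_i$ is a connected double cover of $\gamma_i$ along which $F$ is an immersion (since $\gamma_i \cap S(F) = \emptyset$). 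Extending $f$ to the boundary condition $\hat f$ of $\hat W$ by including the germ of $F$ along each $\hat \gamma_i$ therefore adds no new singular points to $P_f$ and increases $\omega$ by $\sum_i W(F|_{\hat \gamma_i}) = \sum_i 2\, W(F|_{\gamma_i}) \in 2\mathbb{Z}$, so $\omega_{\hat \sigma}(\hat f) \equiv \omega_\sigma(f) \pmod 2$; here \Cref{corollary independence of parity of boundary turning invariant of orientation} is used to identify the mod-$2$ value of $\omega$ independently of the orientation chosen on the old components of $\partial \hat W$. Applying the orientable case to the realization $F$ on $\hat W$ then yields $n \equiv \Gamma^{\hat \sigma}_{\hat W} \equiv \Gamma^\sigma_W \pmod 2$.

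The main obstacle is the local Gauss-integral computation at each cusp, which must establish $\epsilon_c = \pm 1$: intuitively, the two smoothing arcs together enclose the cusp once and together wrap their tangent direction exactly $\pm \pi$ around the half-turn of the cuspy image, giving a net contribution of one unit. Carrying this out rigorously requires choosing the smoothing to be sufficiently close to $S(F)$ and verifying continuity of the tangent direction through the cusp region in the standard model.
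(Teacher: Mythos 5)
Your orientable case is a legitimate alternative to the paper's route: instead of first eliminating cusps in pairs and then excising a disc around a leftover cusp, you generalize \Cref{proposition boundary turning invariant and Euler characteristic} directly to maps with cusps by smoothing $\partial V$ near each cusp and computing a local correction $\epsilon_{c} = \pm 1$. That local computation is essentially equivalent to the statement $\omega_{\rho}(f_{\operatorname{cusp}}) = \pm 1$ of \Cref{example boundary turning invariant of cusps and fold points}, which the paper uses for the same purpose via excision, so this half is believable, although the tangent-rotation count at the cusp is only asserted and you should also note (as the paper does via \Cref{corollary independence of parity of boundary turning invariant of orientation}) that an arbitrary orientation $\sigma$ of $P$ need not be induced by an orientation of $W$, so one must first pass to an induced orientation without changing the parity of $\omega_{\sigma}(f)$.

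The non-orientable case, however, contains a genuine gap: one-sided simple closed curves \emph{disjoint from $S(F)$} need not exist, and no dimension count can produce them, because the obstruction is homological rather than a matter of general position. Concretely, let $W$ be the M\"obius band $[0,1]\times[-1,1]/(0,t)\sim(1,-t)$ and $F(s,t) = ((1+t^{2})\cos 2\pi s,\,(1+t^{2})\sin 2\pi s)$; this is a well-defined fold map whose singular locus is exactly the core circle $\{t=0\}$, which is one-sided (compare the non-trivial loops of \Cref{definition loops}). Its complement is an open annulus, so every simple closed curve in $W\setminus S(F)$ is two-sided in $W$, and your cut system cannot be chosen. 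This is precisely why the paper's proof lets the cutting curve $C$ meet $S(F)$ transversely (avoiding only the cusps) and then balances the two resulting corrections: by \Cref{proposition embeddings of boundary turning invariant} the boundary condition $g$ induced on the doubled curve $C_{0}$ satisfies $\omega_{\rho}(g) \equiv |C\cap S(F)| \bmod 2$, while the new singular boundary points contribute $|C\cap S(F)|$ to the term $|P|/2$, so the two corrections cancel modulo $2$. To repair your argument you must either incorporate this bookkeeping for curves crossing $S(F)$, or prove that cut curves avoiding $S(F)$ always exist --- which, as the example shows, is false.
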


\begin{proof}
Let $F \colon W \rightarrow \mathbb{R}^{2}$ be an extension of $f$.
First we show in part $(1)$ of the proof that $\Gamma^{\sigma}$ is even when $F$ has an even number of cusps.
This will then be used in part $(2)$ of the proof to show that $\Gamma^{\sigma}$ is odd when $F$ has an odd number of cusps.

$(1)$.
Suppose that $F$ has an even number of cusps.

First suppose that $W$ is orientable.
Then, we may assume by \Cref{corollary independence of parity of boundary turning invariant of orientation} that $W$ is oriented in such a way that $\sigma$ is the induced orientation of $P$.
As $F$ has an even number of cusps, we may assume by \Cref{lemma generalized cusp elimination} that $F$ has no cusps.
Then, the claim that $\Gamma^{\sigma}$ is even follows from \Cref{proposition boundary turning invariant and Euler characteristic}.

Now suppose that $W$ is non-orientable.
Let $C \subset \operatorname{int} W$ be an embedded circle whose normal line bundle $\nu \colon N \rightarrow C$ is non-trivial, where $N \subset W$ denotes a tubular neighborhood of $C$ in $W$.
Without loss of generality we may assume that $C$ is transverse to $S(F)$, and avoids the cusps of $F$.
If we cut $W$ along $C$, then the resulting manifold $W_{0}$ has one additional boundary component $C_{0}$.
By construction there is a quotient map $\pi \colon W_{0} \rightarrow W$ that restricts to a diffeomorphism $W_{0} \setminus C_{0} \stackrel{\cong}{\longrightarrow} W \setminus C$, and to a covering map $\gamma \colon C_{0} \rightarrow C$ of degree $2$.
The pullback bundle of $\nu \colon N \rightarrow C$ under $\gamma \colon C_{0} \rightarrow C$ is the trivial line bundle over $C_{0}$, and $\gamma$ can be covered by a canonical bundle map $\widetilde{\gamma} \colon \mathbb{R} \times C_{0} \rightarrow N$ such that $\widetilde{\gamma}(r, -x) = -\widetilde{\gamma}(r, x)$, where $-x$ denotes the unique element in the fiber $\gamma^{-1}(\gamma(x))$ that is distinct from $x \in C_{0}$.
Moreover, there exists a collar $[0, \infty) \times C_{0} \subset W_{0}$ of $C_{0}$ in $W_{0}$ such that $\pi \colon W_{0} \rightarrow W$ and $\widetilde{\gamma} \colon \mathbb{R} \times C_{0} \rightarrow N$ restrict to the same map $[0, \infty) \times C_{0} \rightarrow N$.
Let $\rho$ denote an orientation of $C_{0}$.
Then, writing $g := F \circ \widetilde{\gamma} \colon \mathbb{R} \times C_{0} \rightarrow \mathbb{R}^{2}$, it follows from \Cref{proposition embeddings of boundary turning invariant} that $\omega_{\rho}(g)$ has the same parity as the cardinality of $C \cap S(F)$.
(In fact, this can be shown by choosing an embedding $\alpha \colon C \rightarrow N$ for which the lift of $\alpha \circ \gamma$ to a map $\tilde{\alpha} \colon C_{0} \rightarrow \mathbb{R} \times C_{0}$ is a $(\rho, g)$-adapted embedding in the sense of \Cref{definition adapted embeddings}.)
The quotient map $\pi \colon W_{0} \rightarrow W$ restricts to a diffeomorphism $\pi_{0} \colon P_{0} = \pi^{-1}(P) \stackrel{\cong}{\longrightarrow} P$.
Let $\sigma_{0}$ denote the orientation of $P_{0}$ induced by $\sigma$ via $\pi_{0}$.
The collar $[0, \infty) \times P \subset W$ induces a collar $[0, \infty) \times P_{0} \subset W_{0}$ by means of $(t, x) \mapsto \pi^{-1}(t, \pi_{0}(x))$.
Let $f_{0} = f \circ \pi|_{[0, \infty) \times P_{0}}$.
Then,
$$\omega_{\sigma_{0} \sqcup \rho}(f_{0} \sqcup g) = \omega_{\sigma_{0}}(f_{0}) + \omega_{\rho}(g) \equiv \omega_{\sigma}(f) + |C \cap S(F)| \, \operatorname{mod} 2.$$
From now on, we assume that $C$ is chosen in such a way that $W_{0}$ is orientable.
Then, since $F_{0} := F \circ \pi$ is an extension of $f_{0} \sqcup g$ to $W_{0}$ with an even number of cusps, we have already shown above that
$$\chi(W_{0}) + |(P_{0} \sqcup C_{0})_{f_{0} \sqcup g}|/2 + \omega_{\sigma_{0} \sqcup \rho}(f_{0} \sqcup g) \equiv 0 \, \operatorname{mod} 2.$$
By construction, $\chi(W_{0}) = \chi(W)$.
Moreover, note that we have
$$|(P_{0} \sqcup C_{0})_{f_{0} \sqcup g}| = |P_{f}| + 2 |C \cap S(F)|.$$
All in all, this completes the proof of part $(1)$.

$(2)$.
Suppose that $F$ has an odd number of cusps, and let $c$ denote a cusp of $F$.
Let $U \subset \operatorname{int} W$ be a small open disc centered at $c$ in which $F$ looks like the stable Whitney cusp discussed in \Cref{example boundary turning invariant of cusps and fold points}.
Recall that $\omega_{\rho}(f_{\operatorname{cusp}}) = \pm 1$, where $\rho$ denotes an orientation of $\partial U \cong S^{1}$.
For $W_{0} = W \setminus U$ and the restriction $F_{0} = F|_{W_{0}}$ we know by part $(1)$ that
$$
\chi(W_{0}) + |(\partial W_{0})_{f \sqcup f_{\operatorname{cusp}}}|/2 + \omega_{\sigma \sqcup \rho}(f \sqcup f_{\operatorname{cusp}}) \equiv 0 \, \operatorname{mod} 2.
$$
By construction, $\chi(W) = \chi(W_{0}) + 1$, $|(\partial W_{0})_{f \sqcup f_{\operatorname{cusp}}}| = |P_{f}| + 2$, and $\omega_{\sigma \sqcup \rho}(f \sqcup f_{\operatorname{cusp}}) = \omega_{\sigma}(f) \pm 1$.
Thus, the claim follows.
\end{proof}

\begin{remark}\label{remark Fukuda-Ishikawa theorem}
Compare \Cref{proposition parity of number of cusps of realizations} to the Fukuda-Ishikawa theorem (see Corollary 1.2 in \cite[p. 377]{fukishi}), where the target manifold is a compact surface $N$, and the extension $F \colon W \rightarrow N$ of boundary conditions restricts to a Morse function $\partial W \rightarrow \partial N$.
In this context, in place of the boundary turning invariant, one has to use the related notion of degree of a continuous map $S^{1} \rightarrow S^{1}$.
\end{remark}

As for the proof of part (b) of \Cref{MAIN THEOREM 1}, let $F \colon W \rightarrow \mathbb{R}^{2}$ be a realization of the pattern $(f, \varphi)$.
An inspection of the proof of \Cref{proposition existence of realizations} shows that $F$ can be modified on $W \setminus \partial W$ by a finite sequence of (E) and (C) moves to obtain a realization $F_{1}$ of $(f, \varphi)$ whose cusps lie all on loops of $F_{1}$.
Let $T \subset W$ denote the union of the components of $S(F_{1})$ that are diffeomorphic to the unit interval.
Using general cusp elimination (\Cref{lemma generalized cusp elimination}), we can modify $F_{1}$ on $W \setminus (T \cup \partial W)$ by a finite sequence of (E) and (C) moves to obtain a realization $F_{2}$ of $(f, \varphi)$ such that each component of $W \setminus T$ contains at most one cusp of $F_{2}$.
If $T = \emptyset$, then $F_{2}$ has by construction at most one cusp as desired.
Otherwise, fix a component $J$ of $T$.
Note that every cusp $x$ of $F_{2}$ is the unique cusp on some loop of $F_{2}$, say $C_{x}$.
We may also assume that $C_{x}$ is trivial (see \Cref{definition loops}).
(In fact, if the normal line bundle of $C_{x} \subset W$ is non-trivial, then we apply (C) once at a fold point of $F_{2}$ on $C_{x}$ to create a new pair of cusps, and then we apply (E) to a joining curve between these two cusps that goes around once in a small tubular neighborhood of $C_{x}$.
The resulting new loop containing the cusp will then be trivial.)
By a finite iteration of the loop tunneling move (\Cref{proposition tunneling}) we can modify $F_{2}$ on $W \setminus (J \cup \partial W)$ by a finite sequence of (E) and (C) moves to obtain a realization $F_{3}$ of $(f, \varphi)$ such that every cusp $c$ of $F_{3}$ is the unique cusp on some loop of $F_{3}$, and there exists a path $\alpha_{c} \colon [0, 1] \rightarrow W \setminus \partial W$ such that $\alpha_{c}^{-1}(S(F_{3})) = \{0, 1\}$ and $\alpha_{c}(0) = c$, $\alpha_{c}(1) \in J$.
Furthermore, we may assume that for every cusp $c$ of $F_{3}$ the vector $\alpha_{c}'(0)$ is pointing downward at $c$ (see \Cref{local moves}).
(In fact, this can always be achieved by one (C) move followed by one (E) move near $c$.)
Two cusps $c$ and $d$ of $F_{3}$ for which $\alpha_{c}'(1)$ and $\alpha_{d}'(1)$ point to different sides of $J$ can be eliminated by first applying (C) to generate a new cusp pair on $J$, and then applying (E) twice along joining curves given by suitable perturbations of $\alpha_{c}$ and $\alpha_{d}$.
Repeating this process until no such cusps pairs are left, the remaining cusps lie all in the same component of $W \setminus T$, and can thus be eliminated by means of general cusp elimination (\Cref{lemma generalized cusp elimination}) up to at most one cusp.

This completes the proof of \Cref{MAIN THEOREM 1}.

\section{Realizations and Number of Loops}\label{Realizations and Number of Loops}

Throughout the present section, we suppose that $W$ is orientable, and that $\sigma$ denotes an orientation of $P = \partial W$ which is induced by one of the two orientations of $W$.
The statement of our main result \Cref{MAIN THEOREM 2}(b) involves the following quantities derived from a pattern $(f, \varphi)$ on $W$ and the fixed orientation $\sigma$ of $\partial W$:
\begin{itemize}
\item $n_{\sigma}(f)$ denotes the number of components $C$ of $\partial W$ with the property that $f$ restricts to an orientation preserving immersion $(-\varepsilon', \varepsilon') \times C \rightarrow \mathbb{R}^{2}$ for some $\varepsilon' \in (0, \varepsilon)$.
(Here, we assume that $(-\varepsilon', \varepsilon') \times C$ is oriented in such a way that the inclusion $[0, \varepsilon') \times C \subset W$ is orientation preserving.
Thus, $(-\varepsilon', \varepsilon') \times C$ is equipped with the orientation opposite to the product orientation.)
\item $c_{\sigma}(f, \varphi)$ denotes the number of components of the closed $1$-manifold $Z_{\sigma}(f, \varphi)$ obtained as follows.
Let $\pi_{\sigma}(f)$ be the fixed-point free involution of $P_{f} = (\{0\} \times P) \cap S(f)$ that corresponds to the partition of $P_{f}$ into pairs of the form $P_{f} \cap \overline{V}$, where $V$ runs through the components of $(-\varepsilon, \varepsilon) \times \partial W \setminus S(f)$ on which $f$ is an orientation preserving immersion.
Now, $Z_{\sigma}(f, \varphi)$ is obtained from the finite set $P_{f}$ (considered as a $0$-dimensional CW-complex) by attaching for every member $\{x, y\}$ of the partitions of $P_{f}$ induced by $\varphi$ and $\pi_{\sigma}(f)$ a $1$-cell with endpoints $x$ and $y$.

The quantity $c_{\sigma}(f, \varphi)$ captures the combinatorial interplay between $\varphi$ and $S(f)$ for given $\sigma$.
\end{itemize}

By construction we have the following

\begin{proposition}\label{proposition surgery handle attachment}
Let $(f, \varphi)$ be a pattern on $W$, and let $F$ be a realization of $(f, \varphi)$ without cusps.
Then, the number of boundary components of $W_{F}^{\sigma}$ (see \Cref{definition orientation preserving part of W}) equals $c_{\sigma}(f, \varphi) + l_{F} + n_{\sigma}$, where $l_{F}$ is the number of loops of $F$.
Moreover, there exists an integer $h_{F}^{\sigma} \geq 0$ such that
$$
\chi(W_{F}^{\sigma}) = c_{\sigma}(f, \varphi) + l_{F} + n_{\sigma} - 2 h_{F}^{\sigma}.
$$

Furthermore, if $P_{f} \neq \emptyset$, then there exists a closed one-dimensional submanifold $Z \subset W_{F}^{\sigma} \setminus \partial W_{F}^{\sigma}$ such that the number of components of $Z$ is $h_{F}^{\sigma} - n_{\sigma} \; (\geq 0)$, and such that every component of $W_{F}^{\sigma} \setminus Z$ has nonempty intersection with $S(F)$.
\end{proposition}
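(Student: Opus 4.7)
My plan is to establish the three assertions in order, based on a careful count of the boundary of the compact $2$-manifold with corners $W_F^\sigma$ and the standard Euler-characteristic formula for compact orientable surfaces.

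First I enumerate the boundary circles of $W_F^\sigma$. Each such circle is a cyclic concatenation of $S(F)$-arcs and $\partial W$-arcs meeting at corner points of $P_f$, and exactly three mutually exclusive types arise: (i) loops of $F$ lying in $\partial W_F^\sigma$, contributing $l_F$ circles; (ii) components of $\partial W$ disjoint from $P_f$ on which $f$ is orientation-preserving, contributing $n_\sigma$ circles by definition; and (iii) \emph{mixed} circles, whose $S(F)$-arcs are in bijection with pairs of the partition $\varphi$ and whose $\partial W$-arcs are in bijection with pairs of the partition $\pi_\sigma(f)$ (equivalently, with the orientation-preserving components of $(-\varepsilon,\varepsilon)\times \partial W\setminus S(f)$). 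By the very definition of $Z_\sigma(f,\varphi)$, mixed circles are in bijection with its components, contributing $c_\sigma(f,\varphi)$. Summing gives $|\pi_0(\partial W_F^\sigma)|=c_\sigma(f,\varphi)+l_F+n_\sigma$.

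Next, smoothing the corners of $W_F^\sigma$ without affecting $\chi$ or the boundary count, I apply $\chi=2k-2g_{\mathrm{tot}}-b$ with $k$ the number of components, $g_{\mathrm{tot}}$ the sum of genera, and $b=c_\sigma+l_F+n_\sigma$. Setting $h_F^\sigma:=g_{\mathrm{tot}}+b-k$ immediately yields $\chi(W_F^\sigma)=b-2h_F^\sigma$. The inequality $h_F^\sigma\geq 0$ follows once I verify that every component $V$ of $W_F^\sigma$ has nonempty boundary: otherwise $V$ would be both open and closed in the connected surface $W$, forcing $W_F^{-\sigma}=\emptyset$ and hence $S(F)=\emptyset$, contradicting that $F$ is a realization.

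For the third assertion, assume $P_f\neq\emptyset$. Call a boundary circle of a component $V\subset W_F^\sigma$ \emph{good} (types (i) or (iii)) or \emph{bad} (type (ii)), with counts $m_V$, $n_V$ respectively. Since $V$ meets $S(F)$ only through its boundary, $m_V\geq 1$. The plan is to build $Z$ component by component: in each $V$ of genus $g_V$, first choose $g_V$ pairwise disjoint non-separating simple closed curves in $\operatorname{int}(V)$ to reduce $V$ after cutting to a planar surface, then choose $m_V-1$ further disjoint separating curves grouping its remaining boundary components so that each resulting piece contains exactly one good boundary (viewing the planar surface as a disk with sub-disks removed, this last step is elementary). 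The disjoint union of all such curve systems forms $Z$, which has $\sum_V(g_V+m_V-1)=g_{\mathrm{tot}}+l_F+c_\sigma-k=h_F^\sigma-n_\sigma$ components; by construction every component of $W_F^\sigma\setminus Z$ inherits a good boundary, hence meets $S(F)$.

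The main technical point, I expect, is the combinatorial bijection in step one between mixed boundary circles of $W_F^\sigma$ and components of $Z_\sigma(f,\varphi)$. This uses the transversality of $S(f)$ to $\{0\}\times \partial W$ from \Cref{lemma boundary data}(a), which ensures that near each $x\in P_f$ the $\partial W$-arc and the $S(F)$-arc bounding $W_F^\sigma$ from within are precisely the ones singled out by $\pi_\sigma(f)$ and $\varphi$ respectively. Once this identification is in place, the Euler-characteristic step is immediate, and the construction of $Z$ reduces to routine surface topology.
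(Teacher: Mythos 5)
Your proof is correct and follows essentially the same route as the paper: the same three-way classification of boundary circles of $W_{F}^{\sigma}$ (loops, immersed boundary components, and mixed circles identified with components of $Z_{\sigma}(f,\varphi)$), the same non-negative correction term $h_{F}^{\sigma}$ (your $g_{\mathrm{tot}}+b-k$ equals the paper's count of $S^{0}$-surgeries needed to build $W_{F}^{\sigma}$ from $b$ discs), and the same construction of $Z$ as a curve system whose complement retains, in each piece, a boundary circle meeting $S(F)$ -- your explicit $g_{V}$ non-separating plus $m_{V}-1$ separating curves are exactly the cores of the ``leftover'' handles after the paper's handle slides. The only point worth tightening is that $m_{V}\geq 1$ for every component $V$ needs the clopen argument you already gave for nonempty boundary (if $\partial V\cap S(F)=\emptyset$ then $\partial V\subset\partial W$, so $V=W$ and $S(F)=\emptyset$), not merely the observation that $V$ meets $S(F)$ only through its boundary.
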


\begin{proof}
It follows from the definitions of $c_{\sigma}(f, \varphi)$ and $n_{\sigma}$ that the number of boundary components of $W_{F}^{\sigma}$ is $c_{\sigma}(f, \varphi) + l_{F} + n_{\sigma}$.
To get the formula for $\chi(W_{F}^{\sigma})$, choose $h_{F}^{\sigma}$ to be the number of surgeries on embedded $0$-spheres $S^{0}$ that are necessary to obtain $W_{F}^{\sigma}$ from the disjoint union of $c_{\sigma}(f, \varphi) + l_{F} + n_{\sigma}$ copies of $D^{2}$, the closed unit disc in $\mathbb{R}^{2}$.

It remains to construct the desired submanifold $Z \subset W_{F}^{\sigma} \setminus \partial W_{F}^{\sigma}$ under the assumption that $P_{f} \neq \emptyset$.
For this purpose, note that $c_{\sigma}(f, \varphi) + l_{F}$ is the number of boundary components of $W_{F}^{\sigma}$ which have nonempty intersection with $S(F)$.
Let $X$ and $Y$ denote the disjoint unions of $c_{\sigma}(f, \varphi) + l_{F}$ and $n_{\sigma}$ copies of $D^{2}$, respectively.
Recall from the construction of $h_{F}^{\sigma}$ that there exist pairwise disjoint embeddings
$$
\varphi_{i} \colon S^{0} \hookrightarrow (X \setminus \partial X) \sqcup (Y \setminus \partial Y), \qquad i = 1, \dots, h_{F}^{\sigma},
$$
such that $W_{F}^{\sigma}$ can be obtained from the disjoint union $X \sqcup Y$ by means of surgeries on the embeddings $\varphi_{i}$, and $\partial X$ becomes the union of those boundary components of $W_{F}^{\sigma}$ that have nonempty intersection with $S(F)$.
Since $W$ is connected and $P_{f} \neq \emptyset$ by assumption, every component of $W_{F}^{\sigma}$ must contain at least one boundary component which has nonempty intersection with $S(F)$.
Thus, by moving some of the handles $S^{1} \times [-1, 1] \subset W_{F}^{\sigma}$ associated with the embeddings $\varphi_{i}$ over each other, we may achieve that for every component $D$ of $Y$, there is an index $i_{D}$ such that the embedding $\varphi_{i_{D}}$ maps one point of $S^{0}$ to $X$ and the other point to $D$.
In particular, $h_{F}^{\sigma} - n_{\sigma} \geq 0$, and we may take $Z$ to be the union of the circles $S^{1} \times \{0\} \subset S^{1} \times [-1, 1]$ in the handles associated with the remaining $h_{F}^{\sigma} - n_{\sigma}$ embeddings.
\end{proof}

The statement of our main result \Cref{MAIN THEOREM 2} involves the quantity
\begin{displaymath}
\Delta^{\sigma} := \Gamma^{\sigma}/2 - c_{\sigma}(f, \varphi) + n_{\sigma}(f),
\end{displaymath}
which also depends on $\Gamma^{\sigma}$ as introduced in \Cref{proposition parity of number of cusps of realizations}.
Note that \Cref{proposition boundary turning invariant and Euler characteristic} and \Cref{proposition surgery handle attachment} together imply

\begin{corollary}\label{corollary computation of delta invariant in terms of data of a realization without cusps}
If $F$ is a realization of a pattern $(f, \varphi)$ of $W$ without cusps, then
$$\Delta^{\sigma} = l_{F} - 2 h_{F}^{\sigma} + 2 n_{\sigma}.$$
\end{corollary}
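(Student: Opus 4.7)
The plan is essentially algebraic: the statement is flagged in the paper with ``together imply,'' so I expect it to be a direct combination of the two preceding propositions. The strategy is to produce two independent formulas for $\chi(W_{F}^{\sigma})$ and equate them.

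First, I would invoke \Cref{proposition boundary turning invariant and Euler characteristic}. Its hypotheses are satisfied here because the standing assumption of this section is that $W$ is orientable and $\sigma$ is induced by one of the two orientations of $W$, and because $F$ is by hypothesis a realization of $(f, \varphi)$ \emph{without cusps}. The proposition then yields
$$\chi(W_{F}^{\sigma}) \;=\; \tfrac{1}{2}\bigl(\chi(W) + |P_{f}|/2 + \omega_{\sigma}(f)\bigr) \;=\; \Gamma^{\sigma}/2,$$
where the second equality is the definition of $\Gamma^{\sigma}$ in \Cref{proposition parity of number of cusps of realizations}.

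Next, I would apply \Cref{proposition surgery handle attachment}, which expresses the same Euler characteristic in terms of the combinatorial data $c_{\sigma}(f,\varphi)$, $n_{\sigma}$ and the geometric invariants $l_{F}$, $h_{F}^{\sigma}$ of $F$:
$$\chi(W_{F}^{\sigma}) \;=\; c_{\sigma}(f, \varphi) + l_{F} + n_{\sigma} - 2 h_{F}^{\sigma}.$$

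Equating the two expressions gives $\Gamma^{\sigma}/2 = c_{\sigma}(f,\varphi) + l_{F} + n_{\sigma} - 2h_{F}^{\sigma}$, and substituting this into the definition $\Delta^{\sigma} := \Gamma^{\sigma}/2 - c_{\sigma}(f,\varphi) + n_{\sigma}(f)$ yields $\Delta^{\sigma} = l_{F} - 2 h_{F}^{\sigma} + 2 n_{\sigma}$, as claimed. I see no real obstacle here — the hard work has already been carried out in \Cref{proposition boundary turning invariant and Euler characteristic} (which relates $\omega_{\sigma}(f)$ to the Euler characteristic via a relative winding number argument) and in \Cref{proposition surgery handle attachment} (which reads off the number of boundary components of $W_{F}^{\sigma}$ from the pattern and performs the surgery count). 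The only point to keep straight is the bookkeeping of $n_{\sigma}$ versus $c_{\sigma}(f,\varphi)$ in assembling $\Delta^{\sigma}$, but this is purely mechanical.
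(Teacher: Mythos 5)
Your proof is correct and is exactly the argument the paper intends: the corollary is stated as an immediate consequence of \Cref{proposition boundary turning invariant and Euler characteristic} and \Cref{proposition surgery handle attachment}, obtained by equating the two expressions for $\chi(W_{F}^{\sigma})$ and substituting into the definition of $\Delta^{\sigma}$. The bookkeeping checks out, so nothing further is needed.
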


\section{Proof of \Cref{MAIN THEOREM 2}}\label{proof of main theorem 2}

First suppose that $W$ is non-orientable.
In view of loop generation (\Cref{lemma new fold loops}(i)) and loop reduction (\Cref{proposition loop reduction}) it suffices to show that $F$ can be modified on $W \setminus \partial W$ by a finite number of (E) and (C) moves to obtain a realization $\widetilde{F}$ of $(f, \varphi)$ without cusps in such a way that the number of loops of $F$ does not have the same parity as the number of loops of $\widetilde{F}$.
For this purpose, let $C \subset W \setminus \partial W$ be an embedded circle which intersects $S(F)$ transversely, say in the points $p_{1}, \dots, p_{r}$.
We modify $F$ by applying (C) once to create a pair of cusps at $p_{1}$.
Next, we modify the resulting realization $F_{1}$ of $(f, \varphi)$ by applying general elimination of cusps (\Cref{lemma generalized cusp elimination}) to a curve $\alpha$ that connects the two new cusps by going once around in a tubular neighborhood of $C$ while intersecting $S(F_{1})$ in $r-1$ fold points.
Note that the total number of (E) moves involved in the modification of $F_{1}$ equals $r$.
As a result, we obtain an extension $F_{2}$ of $f$ without cusps.
However, the (E) moves we used to modify $F_{1}$ may have changed the singular pattern, so that $F_{2}$ is not necessarily a realization of the pattern $(f, \varphi)$.
Nevertheless, we can modify $F_{2}$ further to obtain the desired realization $\widetilde{F}$ of the pattern $(f, \varphi)$ as follows.
Every (E) move we have used can be considered as the first step of loop generation (\Cref{lemma new fold loops}(ii)), and we perform the remaining moves.
All in all, we obtain a realization $\widetilde{F}$ of the pattern $(f, \varphi)$ without cusps, but the number of loops has increased by $r$ due to $r$-fold application of loop generation.
Hence, our claim follows from the fact that $r$ will be odd whenever $C$ is chosen such that its normal line bundle is non-trivial.
This completes the proof of part (a).
\par\medskip

As for the proof of part (b), suppose that $W$ is orientable.
Let $\sigma$ denote an orientation of $\partial W$ which is induced by one of the two orientations of $W$.
Moreover, let $P_{f} \neq \emptyset$.

$(i) \Rightarrow (ii)$.
Let $F$ be a realization of the pattern $(f, \varphi)$ which has no cusps and $l_{F}$ loops.
Then, \Cref{corollary computation of delta invariant in terms of data of a realization without cusps} implies that $\Delta^{\rho} \equiv l_{F} \, \operatorname{mod} 2$ and $\Delta^{\rho} = l_{F} - 2(h_{F}^{\rho} - n_{\rho}) \leq l_{F}$ for $\rho \in \{-\sigma, \sigma\}$, where $h_{F}^{\rho} \geq n_{\rho}$ by \Cref{proposition surgery handle attachment}.
Consequently, $l_{F} \in \mathbb{N} \cap (\Delta^{\sigma} + 2 \mathbb{N}) \cap (\Delta^{-\sigma} + 2 \mathbb{N})$.

$(ii) \Rightarrow (i)$.
Let $l \in \mathbb{N} \cap (\Delta^{\sigma} + 2 \mathbb{N}) \cap (\Delta^{-\sigma} + 2 \mathbb{N})$.
In view of loop generation (\Cref{lemma new fold loops}(i)) it suffices to show that $F$ can be modified on $W \setminus \partial W$ by a finite sequence of (E) and (C) moves to a realization of $(f, \varphi)$ which has no cusps and $\operatorname{min} (\mathbb{N} \cap (\Delta^{\sigma} + 2 \mathbb{N}) \cap (\Delta^{-\sigma} + 2 \mathbb{N}))$ loops.
This can be achieved as follows.
Since $P_{f} \neq \emptyset$, we can use loop simplification (\Cref{proposition loop simplification}) to achieve that all loops of $F$ are contractible.
Using tunneling (\Cref{proposition tunneling}) and loop reduction (\Cref{proposition loop reduction}), one can in addition achieve that the remaining loops of $F$ are boundary components of the same component of $W \setminus S(F)$, say a component $V$ of $W_{F}^{\rho}$ for suitable orientation $\rho \in \{\pm\sigma\}$.
Since all loops of $F$ are contractible, it follows that every loop bounds a contractible component of $W_{F}^{-\rho}$.
In this situation, an argument analogous to the last part of the proof of \Cref{proposition surgery handle attachment} implies that
$$
l_{F} + n_{\rho} \leq h_{F}^{\rho}.
$$
(In fact, note that $c_{\rho}(f, \varphi)$ is an upper bound for the number of components of $W_{F}^{\rho}$ because all loops of $F$ lie in the boundary of $V$.)
Hence, by \Cref{corollary computation of delta invariant in terms of data of a realization without cusps},
$$
\Delta^{\rho} = l_{F} - 2 h_{F}^{\rho} +2 n_{\rho} \leq 2(l_{F} - h_{F}^{\rho} + n_{\rho}) \leq 0.
$$
Therefore, using $\Delta^{\rho} \equiv l_{F} \equiv \Delta^{-\rho} \, \operatorname{mod} 2$, we obtain
$$
\operatorname{min} (\mathbb{N} \cap (\Delta^{\sigma} + 2 \mathbb{N}) \cap (\Delta^{-\sigma} + 2 \mathbb{N})) = \operatorname{min} (\mathbb{N} \cap (\Delta^{-\rho} + 2 \mathbb{N})).
$$
Finally, we modify $F$ by means of balancing (\Cref{proposition balancing}) to reduce the number of loops to $\operatorname{min} (\mathbb{N} \cap (\Delta^{-\rho} + 2 \mathbb{N}))$ as follows.
By \Cref{proposition surgery handle attachment}, there is a closed one-dimensional submanifold $Z \subset W_{F}^{-\rho} \setminus \partial W_{F}^{-\rho}$ that consists of $b := h_{F}^{-\rho} - n_{-\rho} \geq 0$ components, and such that every component of $W_{F}^{-\rho} \setminus Z$ has nonempty intersection with $S(F)$.
For every component $H$ of $Z$ we may then choose an embedding $\alpha_{H} \colon [0, 1] \rightarrow W \setminus \partial W$ with the following properties:
\begin{itemize}
\item The endpoints of $\alpha_{H}$ satisfy $\alpha_{H}(0), \alpha_{H}(1) \in W_{F}^{\rho} \setminus (\partial W \cup S(F))$.
\item $\alpha_{H}$ intersects $S(F)$ transversely, and $\alpha_{H}^{-1}(S(F)) = \{t_{0}^{H}, t_{1}^{H}\}$ for some $0 < t_{0}^{H} < t_{1}^{H} < 1$.
\item $\alpha_{H}$ intersects $H$ transversely, and $\alpha_{H}^{-1}(H) = \{s^{H}\}$ for some $s^{H} \in (t_{0}^{H}, t_{1}^{H})$.
\item $\alpha_{H}(t_{0}^{H})$ and $\alpha_{H}(t_{1}^{H})$ are fold points of $F$ that do not lie on the loops of $F$.
(Note that the components of $W_{F}^{-\rho}$ that contain a loop of $F$ in the boundary are disjoint to $H$ because the loops of $F$ are all contractible.)
\item If $H$ and $H'$ are different components of $Z$, then $\alpha_{H}([0, 1]) \cap \alpha_{H'}([0, 1]) = \emptyset$.
\end{itemize}
If $\Delta^{-\rho} > 0$, i.e, $l_{F} > 2(h_{F}^{-\rho} - n_{-\rho}) = 2b$, then we perform $b \; (\geq 0)$ balancing moves to reduce the number of loops to $\Delta^{-\rho} = \operatorname{min} (\mathbb{N} \cap (\Delta^{-\rho} + 2 \mathbb{N}))$ as required.
(More precisely, for every component $H$ of $Z$ we first apply tunneling (\Cref{proposition tunneling}) to move an unused pair $(C_{0}^{H}, C_{1}^{H})$ of loops of $F$ to the endpoints of $\alpha_{H}$, that is, $\alpha_{H}^{-1}(C_{i}^{H}) = \{i\}$ for $i = 0, 1$.
This is always possible since $W$ is connected, and all loops of $F$ are contractible, and hence trivial.
The desired balancing moves can then be performed because the assumptions of \Cref{proposition balancing} are satisfied for our choices of $H$ and $\alpha_{H}$.)
If, however, $\Delta^{-\rho} \leq 0$, i.e, $l_{F} \leq 2(h_{F}^{-\rho} - n_{-\rho}) = 2 b$, then let $\varepsilon \in \{0, 1\}$ such that $\varepsilon \equiv l_{F} \operatorname{mod} 2$.
Then, we can analogously perform $(l_{F}-\varepsilon)/2$ balancing moves to reduce the number of loops to $\varepsilon = \operatorname{min} (\mathbb{N} \cap (\Delta^{-\rho} + 2 \mathbb{N}))$.

This completes the proof of \Cref{MAIN THEOREM 2}.

\begin{remark}\label{remark apparent contour}
Our approach ignores the complexity of the \emph{apparent contour} $F(S(F))$ of a generic map $F \colon W \rightarrow \mathbb{R}^{2}$.
In particular, when $\partial W = \emptyset$ and $F$ is \emph{stable} (i.e., the immersion $F| \colon S(F) \rightarrow \mathbb{R}^{2}$ self-intersects only at fold points of $F$, and all self-intersections are nodes, that is, transverse double points), results including the number of nodes of $F$ can for instance be found in \cite{tyama}.
It might be interesting to think about a version of \Cref{MAIN THEOREM 2} for stable maps of surfaces with boundary into the plane (compare \cite{bruce}) that also includes the number of nodes.
Note that the number of nodes cannot directly be controlled by our algorithm since the (E) and (C) moves easily produce new nodes.
\end{remark}

\section{Applications}\label{applications}
Throughout this section we suppose $W$ to be orientable (the non-orientable case could be handled similarly).
Let $\sigma$ denote an orientation of $P = \partial W$ which is induced by one of the two orientations of $W$.

In this section the spirit of the gluing principle is reflected in our applications of \Cref{MAIN THEOREM 2} to pseudo-immersions (see \Cref{corollary extending main result to pseudo-immersions} and \Cref{example pseudo-immersions}) and cusp counting (see \Cref{corollary extending main result to cusps}).

\begin{proposition}\label{corollary extending main result to pseudo-immersions}
Let $F$ be a realization of the pattern $(f, \varphi)$ without cusps.
Then, the following statements are equivalent for any integer $l \geq 1$:
\begin{enumerate}[(i)]
\item $F$ can be modified on $W \setminus \partial W$ by a finite sequence of (E) and (C) moves to a realization of $(f, \varphi)$ which has no cusps and $l$ loops.
\item $l \in 1 + \mathbb{N} \cap (\Delta^{\sigma} -1 + 2 \mathbb{N}) \cap (\Delta^{-\sigma} -1 + 2 \mathbb{N})$.
\end{enumerate}
\end{proposition}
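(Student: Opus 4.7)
The plan is to reduce to \Cref{MAIN THEOREM 2}(b) by excising from $W$ a small open disc that meets one loop of the given realization, shifting the pattern into the regime $P_{f'} \neq \emptyset$ where that theorem applies. Since $P_f = \emptyset$ and $S(F) \neq \emptyset$, every component of $S(F)$ is a loop; I will pick one, call it $L$, and choose an open disc $D \subset W \setminus \partial W$ whose boundary meets $L$ transversely in exactly two points $\{x,y\}$ and whose interior meets $S(F)$ only in the arc $L \cap D$. Setting $W' := W \setminus \operatorname{int}(D)$ and $F' := F|_{W'}$ makes $F'$ into a realization without cusps of a naturally induced pattern $(f',\varphi')$ on $W'$ with $P_{f'} = \{x,y\}$ paired by $\varphi'$ (since $\{x,y\}$ is the boundary of the arc component $L \setminus D$ of $S(F')$).

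The core computation will be $\Delta^{\pm\sigma'}(f',\varphi') = \Delta^{\pm\sigma}(f,\varphi) - 1$, where $\sigma'$ restricts $\sigma$ on $\partial W$ and extends by the induced orientation on $\partial D$. Writing $D_\pm := D \cap W_F^{\pm\sigma}$ for the two closed half-discs into which $L \cap D$ splits $D$, inclusion-exclusion in $W$ yields
\[
\chi\bigl(W_F^{\pm\sigma}\bigr) = \chi\bigl({W'}_{F'}^{\pm\sigma'}\bigr) + \chi(D_\pm) - \chi\bigl({W'}_{F'}^{\pm\sigma'} \cap D_\pm\bigr) = \chi\bigl({W'}_{F'}^{\pm\sigma'}\bigr) + 1 - 1,
\]
since $D_\pm$ is a disc and the intersection is the closed arc $\partial D \cap W_F^{\pm\sigma}$. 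Combined with \Cref{proposition boundary turning invariant and Euler characteristic}, this forces $\Gamma^{\sigma'} = \Gamma^\sigma$. The new boundary component $\partial D$ carries the two singular points $\{x,y\}$ of $f'$, so $n_{\sigma'}(f') = n_\sigma(f)$; moreover, $\pi_{\sigma'}(f')$ also pairs $\{x,y\}$ (exactly one of the two arcs of $\partial D \setminus \{x,y\}$ lies on the orientation-preserving side of $L$), producing one new bigon component of $Z_{\sigma'}(f',\varphi')$. Hence $c_{\sigma'}(f',\varphi') = c_\sigma(f,\varphi) + 1 = 1$, and assembling these gives $\Delta^{\sigma'} = \Delta^\sigma - 1$, with the identical computation for $-\sigma$.

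Since $W' \setminus \partial W' \subset W \setminus \partial W$, every finite sequence of (E) and (C) moves on $F'$ supported in $W' \setminus \partial W'$ carries over to $F$, and the resulting map extends by $F|_D$ to a realization $\widetilde{F}$ of $(f,\varphi)$ on $W$ with $l_{\widetilde{F}} = l_{\widetilde{F'}} + 1$ (the arc of $S(\widetilde{F'})$ bounding $\{x,y\}$ closes up with $L \cap D$ into a loop) and the same cusp count. Applying \Cref{MAIN THEOREM 2}(b) to $(W',f',\varphi')$ and $F'$, the achievable values of $l_{\widetilde{F'}}$ are precisely $\mathbb{N} \cap (\Delta^{\sigma'} + 2\mathbb{N}) \cap (\Delta^{-\sigma'} + 2\mathbb{N}) = \mathbb{N} \cap (\Delta^\sigma - 1 + 2\mathbb{N}) \cap (\Delta^{-\sigma} - 1 + 2\mathbb{N})$, so $l = l_{\widetilde{F'}} + 1$ ranges over $1 + \mathbb{N} \cap (\Delta^\sigma - 1 + 2\mathbb{N}) \cap (\Delta^{-\sigma} - 1 + 2\mathbb{N})$, proving (ii)$\,\Rightarrow\,$(i); for the converse, the same excision applied to any realization $\widetilde{F}$ reachable from $F$ (around a loop of $\widetilde F$) together with the forward direction of \Cref{MAIN THEOREM 2}(b) forces $l_{\widetilde F} - 1$ into the same set. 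The main obstacle will be this Euler-characteristic bookkeeping: although $\chi(W') = \chi(W) - 1$, each removed half-disc contributes a disc's worth of Euler characteristic that is exactly cancelled by the arc along which it is glued back, which pins the shift to $\Delta^{\sigma'} = \Delta^\sigma - 1$ and matches the $-1$ shift in the proposition's stated condition.
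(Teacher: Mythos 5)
Your proposal is correct and follows essentially the same route as the paper: excise a small disc straddling a loop of the realization, identify the induced pattern on the complement (the shift $\Delta^{\pm\sigma'}=\Delta^{\pm\sigma}-1$ coming from the one extra bigon component of $Z$, with $\Gamma$ and $n$ unchanged), and reduce both implications to \Cref{MAIN THEOREM 2}(b); the paper computes $\Gamma^{\pm\rho}_{V}=\Gamma^{\pm\sigma}$ directly from $\omega_{\pm\rho}(f_{\operatorname{fold}})=0$ rather than via your Euler-characteristic bookkeeping with \Cref{proposition boundary turning invariant and Euler characteristic}, but the two computations are equivalent. The only caveat is that you silently restrict to $P_{f}=\emptyset$ (to guarantee a loop exists), whereas the statement covers arbitrary patterns; this is harmless, since for $P_{f}\neq\emptyset$ and $l\geq 1$ the set in (ii) coincides with the set in \Cref{MAIN THEOREM 2}(b)(ii), so that case is immediate.
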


\begin{proof}
We present the proof of implication $(i) \Rightarrow (ii)$ in detail -- the proof of the converse implication is similar.

Let $G$ be a realization of $(f, \varphi)$ which has no cusps and $l$ loops.
Let $C$ be the boundary of a small open disc $U$ centered at a fold point on a loop of $G$ such that $C$ intersects $S(G)$ transversely in precisely two points.
Then, the surface $V := W \setminus U$ has boundary $\partial V = P \sqcup C$, which we equip with an orientation $\rho$ such that $\rho|_{\partial W} = \sigma$, and such that $\rho$ is induced by an orientation of $V$.
(If $\partial W \neq \emptyset$, then $\rho$ is uniquely determined by $\sigma$.)
Now $H := G|_{V}$ is a fold map on $V$ with $l-1$ loops.
If we equip $C$ with a suitable collar neighborhood in $V$, then $H$ can be considered as a realization of the pattern $(f \sqcup f_{\operatorname{fold}}, \varphi \sqcup \varphi_{2})$, where $f_{\operatorname{fold}}$ is taken from \Cref{example boundary turning invariant of cusps and fold points}, and $\varphi_{2}$ denotes the unique fixed-point free involution on $C \cap S(G)$.
Hence, using $\chi(W) = \chi(V) + 1$, $|(P \sqcup C)_{f \sqcup f_{\operatorname{fold}}}| = |P_{f}|+2$, and $\omega_{\pm\rho}(f \sqcup f_{\operatorname{fold}}) = \omega_{\pm\sigma}(f)$ (see \Cref{example boundary turning invariant of cusps and fold points}), we obtain
\begin{displaymath}
\Gamma^{\pm\rho}_{V} := \chi(V) + |(P \sqcup C)_{f \sqcup f_{\operatorname{fold}}}|/2 + \omega_{\pm\rho}(f \sqcup f_{\operatorname{fold}}) = \Gamma^{\pm\sigma}.
\end{displaymath}
Moreover, $n_{\pm\rho}(f \sqcup f_{\operatorname{fold}}) = n_{\pm\sigma}(f)$ and $c_{\pm\rho}(f \sqcup f_{\operatorname{fold}}, \varphi \sqcup \varphi_{2}) = c_{\pm\sigma}(f, \varphi) + 1$ imply
\begin{displaymath}
\Delta^{\pm\rho}_{V} := \Gamma^{\pm\rho}_{V}/2 - c_{\pm\rho}(f \sqcup f_{\operatorname{fold}}, \varphi \sqcup \varphi_{2}) + n_{\pm\rho }(f \sqcup f_{\operatorname{fold}}) = \Delta^{\pm\sigma} - 1.
\end{displaymath}
Finally, the implication $(i) \Rightarrow (ii)$ of \Cref{MAIN THEOREM 2}(b) implies that $l-1 \in \mathbb{N} \cap (\Delta^{\sigma} -1 + 2 \mathbb{N}) \cap (\Delta^{-\sigma} -1 + 2 \mathbb{N})$.
\end{proof}

\begin{example}[Pseudo-immersions]\label{example pseudo-immersions}
An important special case of \Cref{corollary extending main result to pseudo-immersions} arises when $f$ restricts to an immersion $(-\varepsilon', \varepsilon') \times \partial W \rightarrow \mathbb{R}^{2}$ for some $\varepsilon' \in (0, \varepsilon)$.
In this case, realizations $F$ of the pattern $(f, \varphi_{\emptyset})$ (where $\varphi_{\emptyset}$ denotes the unique permutation of the empty set $\emptyset$) are called \emph{pseudo-immersions} (note that $S(F) \neq \emptyset$ by \Cref{remark singular locus not empty}).
The case that $\partial W$ is connected has been considered in \cite[Theorems 1.2 and 1.3]{myam}, and \Cref{corollary extending main result to pseudo-immersions} can be reduced to it as follows.

The assumption on $f$ implies that $P_{f} = \emptyset$ and $\omega_{\sigma}(f) = W(F|_{\partial W})$, the winding number (see \Cref{boundary turning invariant}).
Hence, $\Gamma^{\sigma} = \chi(W) + W(F|_{\partial W})$ by \Cref{proposition parity of number of cusps of realizations}.
Moreover, $n_{\sigma}(f) = 1$ (for suitable $\sigma$), and $c_{\sigma}(f, \varphi_{\emptyset}) = 0$.
Thus, we obtain
\begin{displaymath}
\Delta^{\sigma} = (\chi(W) + W(F|_{\partial W}))/2 + 1 = ((\chi(W) + 1) + (W(F|_{\partial W})+1))/2.
\end{displaymath}
On the other hand, observe that $\omega_{-\sigma}(f) = - \omega_{\sigma}(f) = -W(F|_{\partial W})$, $n_{-\sigma}(f) = 0$, and $c_{-\sigma}(f, \varphi_{\emptyset}) = 0$, which yields
\begin{displaymath}
\Delta^{-\sigma} = (\chi(W) - W(F|_{\partial W}))/2 = ((\chi(W) + 1) - (W(F|_{\partial W})+1))/2.
\end{displaymath}
Altogether, $\mathfrak{m} := \operatorname{max}\{\Delta^{\sigma}, \Delta^{-\sigma}\} = (\chi(W) + 1 + |W(F|_{\partial W})+1|)/2$, and by \Cref{corollary extending main result to pseudo-immersions}, in accordance with \cite{myam}, the set of possible numbers of loops of $F$ is
\begin{align*}
1 + \mathbb{N} \cap (\mathfrak{m} -1 + 2 \mathbb{N}) =
\begin{cases}
\operatorname{max}(\mathfrak{m}, 2) + 2 \mathbb{N}, \quad \text{if } \chi(W) - W(F|_{\partial W}) \equiv 0 \, \operatorname{mod} 4, \\
\operatorname{max}(\mathfrak{m}, 1) + 2 \mathbb{N}, \quad \text{if } \chi(W) - W(F|_{\partial W}) \equiv 2 \, \operatorname{mod} 4.
\end{cases}
\end{align*}
\end{example}

\begin{remark}
Note that the proof of \cite[Theorem 1.3]{myam} relies on a theorem due to Eliashberg \cite{eli} and Francis \cite{fra} (see \cite[Theorem 3.2, p. 1330]{myam}).
On the other hand, our approach is purely based on (E) and (C) moves, and we can easily deduce the Eliashberg-Francis theorem from \Cref{corollary extending main result to pseudo-immersions}.
\end{remark}

\begin{proposition}\label{corollary extending main result to cusps}
Let $F$ be a realization of the pattern $(f, \varphi)$.
If $P_{f} \neq \emptyset$, then the following statements are equivalent for any integers $c > 0$ and $l$:
\begin{enumerate}[(i)]
\item $F$ can be modified on $W \setminus \partial W$ by a finite sequence of (E) and (C) moves to a realization of $(f, \varphi)$ which has $c$ cusps and $l$ loops.
\item
$l \in \bigcup_{w \in \{-c, -c+2, \dots, c-2, c\}} \mathbb{N} \cap (\Delta^{\sigma} +w/2 + 2 \mathbb{N}) \cap (\Delta^{-\sigma} - w/2 + 2 \mathbb{N})$.
\end{enumerate}
\end{proposition}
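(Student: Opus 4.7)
My approach closely follows the proof of \Cref{corollary extending main result to pseudo-immersions}: I will cut out a small open disc around each of the $c$ cusps of a given realization and reduce to \Cref{MAIN THEOREM 2}(b) on the resulting surface with $c$ additional boundary circles, each carrying a Whitney-cusp boundary condition $f_{\operatorname{cusp}}$. The shift parameter $w \in \{-c,-c+2,\dots,c\}$ in the disjunction of (ii) will correspond to the sum $w = \sum_{i=1}^{c} w_i$ of the signs $w_i := \omega_{\rho|_{C_i}}(f_{\operatorname{cusp},i}) \in \{\pm 1\}$ furnished by \Cref{example boundary turning invariant of cusps and fold points}.

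For (i) $\Rightarrow$ (ii), let $G$ be the realization obtained from $F$ with $c$ cusps and $l$ loops. After first modifying $G$ without altering $(c,l)$ so that each cusp sits on its own distinct loop (combining the cusp-migration arguments from the end of the proof of \Cref{MAIN THEOREM 1}(b) with \Cref{lemma new fold loops} and \Cref{lemma generalized cusp elimination}), I pick pairwise disjoint open discs $U_1,\dots,U_c \subset \operatorname{int} W$ centered at the cusps with $\partial U_i \pitchfork S(G)$ in exactly two points, set $V := W \setminus \bigsqcup_{i} U_i$, and observe that $G|_V$ is a cusp-free realization on $V$ of the pattern $(\tilde f, \tilde \varphi) := (f \sqcup \bigsqcup_{i} f_{\operatorname{cusp},i},\, \varphi \sqcup \bigsqcup_{i} \varphi_i)$ with exactly $l-c$ loops. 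A direct bookkeeping in the spirit of the pseudo-immersion proof yields $\chi(V) = \chi(W) - c$, $|(\partial V)_{\tilde f}| = |P_f| + 2c$, $\omega_{\pm\rho}(\tilde f) = \omega_{\pm\sigma}(f) \pm w$, $n_{\pm\rho}(\tilde f) = n_{\pm\sigma}(f)$, and $c_{\pm\rho}(\tilde f, \tilde \varphi) = c_{\pm\sigma}(f,\varphi) + c$ (each $C_i$ carries exactly two points of $P_{\tilde f}$, automatically paired by both $\tilde \varphi$ and $\pi_{\pm\rho}$). Hence $\Delta^{\pm\rho}_V = \Delta^{\pm\sigma} \pm w/2 - c$, and applying \Cref{MAIN THEOREM 2}(b)(i) $\Rightarrow$ (ii) to $G|_V$ yields $l - c \in \mathbb{N} \cap (\Delta^\rho_V + 2\mathbb{N}) \cap (\Delta^{-\rho}_V + 2\mathbb{N})$, which rearranges precisely to the $w$-summand of (ii).

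For the converse (ii) $\Rightarrow$ (i), given $l$ in the summand of (ii) indexed by some $w \in \{-c, \ldots, c\}$, I fix signs $w_i \in \{\pm 1\}$ with $\sum_i w_i = w$, carve out $c$ pairwise disjoint open discs $U_i \subset \operatorname{int} W$ with $f_{\operatorname{cusp},i}$ boundary data of prescribed sign $w_i$, and invoke \Cref{MAIN THEOREM 2}(b)(ii) $\Rightarrow$ (i) on $V$ to construct a cusp-free realization of $(\tilde f, \tilde \varphi)$ with $l - c$ loops. Re-inserting a standard Whitney cusp in each $U_i$ then produces the desired realization of $(f, \varphi)$ on $W$ with $c$ cusps and $l$ loops. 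To connect the given $F$ to this target realization via a finite sequence of (E) and (C) moves, I reduce $F$ first to at most one cusp by \Cref{MAIN THEOREM 1}(b), adjust the loop count using the techniques of the proof of \Cref{MAIN THEOREM 2}(b), and then insert the remaining cusp pairs via (C) moves applied at fold points chosen so as not to disrupt the established loop structure.

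The principal obstacle is the normalization step in (i) $\Rightarrow$ (ii): arranging the $c$ cusps of $G$ onto $c$ distinct loops without changing the totals $(c,l)$. In the generic regime $l \geq c$ this can be achieved by iteratively generating a fresh contractible trivial loop on a nearby fold line via \Cref{lemma new fold loops}(i) and migrating a stray cusp onto it by means of \Cref{lemma generalized cusp elimination}. The borderline regime $l < c$ is more delicate, since multiple cusps must then necessarily share a singular component; here one should track the discrepancy $l_{G|_V} = l - l_c$, where $l_c$ counts singular components of $G$ that carry cusps, and verify that the disjunction over $w \in \{-c,\dots,c\}$ in (ii) exactly absorbs the extra ambiguity produced by this non-normalized cutting. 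A secondary point to confirm is that each additional boundary circle $C_i$ contributes precisely $+1$ to $c_{\pm\rho}(\tilde f, \tilde \varphi)$ and precisely $\pm 1$ to $\omega_{\pm\rho}(\tilde f)$, which reduces to the local analysis of $f_{\operatorname{cusp}}$ recorded in \Cref{example boundary turning invariant of cusps and fold points}.
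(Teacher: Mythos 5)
Your overall strategy---excising small discs around the $c$ cusps and reducing to \Cref{MAIN THEOREM 2}(b) on the complement $V$, with the union over $w$ accounted for by the local invariants $\pm 1$ of the cusp boundary conditions from \Cref{example boundary turning invariant of cusps and fold points}---is exactly the paper's. However, there are genuine gaps in the execution, both traceable to your insistence on normalizing so that each cusp sits on its own loop. In the direction $(i) \Rightarrow (ii)$, that normalization is impossible in general: if $G$ has, say, all $c$ cusps on arcs meeting $\partial W$ and $l = 0$ loops, no sequence of moves can place each cusp on a distinct loop while preserving $(c,l)$; and migrating a single cusp between components via a (C) move followed by an (E) move along a joining curve merges the two components, changing the loop count. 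The paper avoids normalization entirely: it sets $d$ equal to the number of \emph{loops} of $G$ carrying at least one cusp (not, as in your fallback, the number of singular components carrying cusps---a cusp on an arc does not reduce the loop count when excised), so that $G|_{V}$ has $l - d$ loops and $c_{\pm\rho} = c_{\pm\sigma} + d$; the two occurrences of $d$ cancel when the conclusion of \Cref{MAIN THEOREM 2}(b) is translated back, which is precisely why no normalization is needed.

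The gap in $(ii) \Rightarrow (i)$ is more serious. Your plan to connect $F$ to the target by first reducing to at most one cusp via \Cref{MAIN THEOREM 1}(b), then adjusting the loop count via the techniques of \Cref{MAIN THEOREM 2}(b), and finally inserting cusp pairs by (C) moves cannot reach all $l$ in the union of (ii): a (C) move at a fold point does not change the number of loops, so this route only produces $l \in \mathbb{N} \cap (\Delta^{\sigma} + 2\mathbb{N}) \cap (\Delta^{-\sigma} + 2\mathbb{N})$, whereas the summands with $w \neq 0$ generally contain strictly smaller values of $l$ (this extra reach is the whole point of the proposition). Your normalized carving also forces the realization on $V$ to have $l - c$ loops, which is negative whenever $l < c$, and you do not explain how to arrange cusps with prescribed local signs $w_{i}$. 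The paper instead creates cusps by (C) moves \emph{first} (using the fact, proved via \Cref{proposition boundary turning invariant and Euler characteristic}, that the two cusps of a (C)-pair have opposite local invariants, so any admissible $w$ is realizable), places the chosen cusps on components of $S(F)$ meeting $\partial W$ (possible since $P_{f} \neq \emptyset$, giving $d = 0$ and target loop count $l$ on $V$), and only then applies \Cref{MAIN THEOREM 1}(b) and \Cref{MAIN THEOREM 2}(b) on $V \setminus \partial V$; every step is an (E) or (C) move on $W \setminus \partial W$ from the outset, so no separate ``connecting'' argument is required.
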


\begin{proof}
The proof is based on similar ideas as the proof of \Cref{corollary extending main result to pseudo-immersions}.

$(i) \Rightarrow (ii)$.
Let $G$ be a realization of $(f, \varphi)$ which has $c \geq 1$ cusps and $l$ loops.
Let $U \subset W$ be the union of $c$ small open discs each of which is centered at a different cusp of $G$.
By choosing those small discs appropriately, we may assume that the boundary $C$ of $U$ is diffeomorphic to the disjoint union of $c$ circles each of which intersects $S(G)$ transversely in precisely two points.
Then, the surface $V := W \setminus U$ has boundary $\partial V = P \sqcup C$, which we equip with the unique orientation $\rho$ such that $\rho|_{\partial W} = \sigma$ (where note that $\partial W \neq \emptyset$), and such that $\rho$ is induced by an orientation of $V$.
Now $H := G|_{V}$ is a fold map on $V$ with $l - d$ loops, where $d$ denotes the number of loops of $G$ that contain at least one cusp of $G$.
If we equip $C$ with a suitable collar neighborhood in $V$, then $H$ is near every component of $C$ an extension of the boundary condition $f_{\operatorname{cusp}}$ considered in \Cref{example boundary turning invariant of cusps and fold points}.
Thus, $H$ can be considered as a realization of the pattern $(f \sqcup g, \psi)$, where $g$ denotes the disjoint union of $c$ copies of $f_{\operatorname{cusp}}$, and $\psi$ denotes the partition of $\partial V \cap S(G)$ into those subsets of cardinality $2$ which arise as the boundary points of some component of $S(H)$.
Hence, using $\chi(W) = \chi(V) + c$, $|(P \sqcup C)_{f \sqcup g}| = |P_{f}|+2c$, and $\omega_{\pm\rho}(f \sqcup g) = \omega_{\pm\sigma}(f) \pm w$ for some integer $w$ with the same parity as $c$, and such that $|w| \leq c$, we obtain
\begin{displaymath}
\Gamma^{\pm\rho}_{V} := \chi(V) + |(P \sqcup C)_{f \sqcup g}|/2 + \omega_{\pm\rho}(f \sqcup g) = \Gamma^{\pm\sigma} \pm w.
\end{displaymath}
Moreover, $n_{\pm\rho}(f \sqcup g) = n_{\pm\sigma}(f)$ and $c_{\pm\rho}(f \sqcup g, \psi) = c_{\pm\sigma}(f, \varphi) + d$ imply
\begin{displaymath}
\Delta^{\pm\rho}_{V} := \Gamma^{\pm\rho}_{V}/2 - c_{\pm\rho}(f \sqcup g, \psi)+ n_{\pm\rho}(f \sqcup g) = \Delta^{\pm\sigma} \pm w/2 - d.
\end{displaymath}
Finally, to conclude that $(ii)$ holds, we apply the implication $(i) \Rightarrow (ii)$ of \Cref{MAIN THEOREM 2}(b) to the realization $H$ of the pattern $(f \sqcup g, \psi)$ to obtain
$$
l-d \in \mathbb{N} \cap (\Delta^{\sigma} + w/2 - d + 2 \mathbb{N}) \cap (\Delta^{-\sigma} - w/2 - d + 2 \mathbb{N}).
$$

$(ii) \Rightarrow (i)$.
We may choose an integer $w \in \{-c, -c+2, \dots, c-2, c\}$ such that
$$
l \in \mathbb{N} \cap (\Delta^{\sigma} +w/2 + 2 \mathbb{N}) \cap (\Delta^{-\sigma} - w/2 + 2 \mathbb{N}).
$$
We modify $F$ as follows.
After possibly creating a number of new pairs of cusps for $F$ by means of (C), we can choose cusps $x_{1}, \dots, x_{c}$ of $F$ admitting pairwise disjoint small open disc neighborhoods $U_{1}, \dots, U_{c}$, respectively, where the boundary circle $C_{i}$ of each $U_{i}$ intersects $S(F)$ transversely in precisely two points, and the following property holds.
If $f_{i}$ denotes the restriction of $F$ to some fixed collar neighborhood of $C_{i}$ in $V := W \setminus \bigsqcup_{i} U_{i}$, and $\rho$ denotes the unique orientation of $\partial V = \partial W \sqcup \bigsqcup_{i} C_{i}$ that is induced by an orientation of $V$ in such a way that $\rho|_{\partial W} = \sigma$ (where note that $\partial W \neq \emptyset$), then
$$
w = \sum_{i = 1}^{c} \omega_{\rho|_{C_{i}}}(f_{i}).
$$
(Indeed, since $w \in \{-c, -c+2, \dots, c-2, c\}$ can clearly be written as the sum of $c$ summands of the form $\pm 1$, we only have to apply the following observation.

If we create a new pair $(x, x')$ of cusps for $F$ by means of (C) and choose sufficiently small open disc neighborhoods $U$ and $U'$ of $x$ and $x'$, respectively, whose respective boundary circles $C$ and $C'$ intersect $S(F)$ transversely in precisely two points, then the following property holds.
If $g$ and $g'$ denote the restrictions of $F$ to some fixed collar neighborhoods of $C$ and $C'$ in $Y = W \setminus (U \sqcup U')$, respectively, and $\tau$ denotes the unique orientation of $\partial Y = \partial W \sqcup C \sqcup C'$ that is induced by an orientation of $Y$ in such a way that $\tau|_{\partial W} = \sigma$, then $\omega_{\tau|_{C}}(g), \omega_{\tau|_{C'}}(g') \in \{\pm 1\}$ and $\omega_{\tau|_{C}}(g) = - \omega_{\tau|_{C'}}(g')$.

In fact, the statement $\omega_{\tau|_{C}}(g), \omega_{\tau|_{C'}}(g') \in \{\pm 1\}$ follows because we can choose the neighborhoods $U$ and $U'$ so that $C$ and $C'$ have collar neighborhoods in $Y = W \setminus (U \sqcup U')$, respectively, in which $g$ and $g'$ look like copies of the boundary condition $f_{\operatorname{cusp}}$ considered in \Cref{example boundary turning invariant of cusps and fold points}.
Moreover, the statement $\omega_{\tau|_{C}}(g) = - \omega_{\tau|_{C'}}(g')$ can be deduced as follows.
Since the pair $(x, x')$ of cusps has been created by means of (C), we can choose the closures of $U$ and $U'$ to lie in a small closed disc $Z \subset W \setminus \partial W$ such that the boundary circle $\partial Z$ intersects $S(F)$ transversely in precisely two points, and the restriction of $F$ to some fixed collar neighborhood of $\partial Z$ in $W \setminus Z$ looks like the boundary condition $f_{\operatorname{fold}}$ considered in \Cref{example boundary turning invariant of cusps and fold points}.
Then the claim follows by applying \Cref{proposition boundary turning invariant and Euler characteristic} to $F|_{Z \setminus (U \sqcup U')}$.)

The restriction $H := F|_{V}$ is a generic map on $V$ that can be considered as a realization of a pattern of the form $(h, \psi)$, where $h = f \sqcup \bigsqcup_{i}f_{i}$, and where $\psi$ denotes the partition of $\partial V \cap S(G)$ into those subsets of cardinality $2$ which arise as the boundary points of some component of $S(H)$.
Hence, using $\chi(W) = \chi(V) + c$, $|(\partial V)_{h}| = |P_{f}|+2c$, as well as $\omega_{\pm\rho}(h) = \omega_{\pm\sigma}(f) \pm w$, we obtain
\begin{displaymath}
\Gamma^{\pm\rho}_{V} := \chi(V) + |(\partial V)_{h}|/2 + \omega_{\pm\rho}(h) = \Gamma^{\pm\sigma} \pm w.
\end{displaymath}
Without loss of generality, we may assume that the cusps $x_{1}, \dots, x_{c}$ lie on components of $S(F)$ that have nonempty intersection with $\partial W$.
Consequently, $c_{\pm\rho}(h, \psi) = c_{\pm\sigma}(f, \varphi)$.
Using also that $n_{\pm\rho}(h) = n_{\pm\sigma}(f)$, we obtain
\begin{displaymath}
\Delta^{\pm\rho}_{V} := \Gamma^{\pm\rho}_{V}/2 - c_{\pm\rho}(h, \psi) + n_{\pm\rho}(h) = \Delta^{\pm\sigma} \pm w/2.
\end{displaymath}

Note that $\Gamma^{\rho}_{V}$ is even because $\Delta^{\rho}_{V} = \Delta^{\sigma} + w/2$ is an integer by assumption.
Hence, \Cref{proposition parity of number of cusps of realizations} implies that $H$ has an even number of cusps.
Thus, according to \Cref{MAIN THEOREM 1}(b), $H$ can be modified on $V \setminus \partial V$ by a finite sequence of (E) and (C) moves to obtain a realization $H_{0}$ of $(h, \psi)$ without cusps.
Since
$$
l \in \mathbb{N} \cap (\Delta^{\rho}_{V} + 2 \mathbb{N}) \cap (\Delta^{-\rho}_{V} + 2 \mathbb{N}),
$$
we may apply the implication $(ii) \Rightarrow (i)$ of \Cref{MAIN THEOREM 2}(b) to modify the realization $H_{0}$ of the pattern $(h, \psi)$ by a finite sequence of (E) and (C) moves to a realization of $(h, \psi)$ which has no cusps and $l$ loops.
The above modifications of $H$ and $H_{0}$ can be understood as modifications of $F$ on $W \setminus \partial W$ by a finite sequence of (E) and (C) moves that turn $F$ into a realization of $(f, \varphi)$ which has $c$ cusps and $l$ loops.
\end{proof}

\begin{remark}\label{remark positive TFT}
The work on this paper has been motivated by Banagl's recent construction of TFT-type invariants \cite{ban}.
In fact, Banagl has introduced the notion of a positive TFT as a convenient framework for constructing TFT-type invariants based on \emph{semirings} rather than on rings.
Following the essential feature of a TFT, the state sum (or partition function) of a positive TFT is required to satisfy a gluing axiom, which forces the information it detects to be local to a certain extent.
As for the technical implementation of Banagl's positive TFT based on fold maps, the combinatorial information captured by singular patterns of fold maps is incorporated into the morphisms of a category.
The resulting \emph{Brauer category} is a strict monoidal category that is constructed as categorification of the Brauer algebras classically known from representation theory of the orthogonal group $O(n)$.
By means of \Cref{MAIN THEOREM 2} we are able to list those integers $l$ for which a given singular pattern on $W$ admits a realization having no cusps and $l$ loops.
From the perspective of Banagl's positive TFT based on fold maps, this leads to the computation of the state sum invariant on all $2$-dimensional cobordisms (see \cite{wra2} for details).
\end{remark}

\bibliographystyle{amsplain}

\begin{thebibliography}{10}
\bibitem{aic} F. Aicardi, T. Ohmoto, \emph{First order local invariants of apparent contours}, Topology \textbf{45} (2006), 27--45.
\bibitem{ban} M. Banagl, \emph{Positive topological quantum field theories}, Quantum Topology \textbf{6} (2015), no. 4, 609--706.
\bibitem{bruce} J.W. Bruce, P.J. Giblin, \emph{Projections of surfaces with boundary}, London Math. Society (3) \textbf{60} (1990), 392--416.
\bibitem{eli} Y. Eliashberg \emph{On singularities of folding type}, Math. USSR Izv. \textbf{4} (1970), 1119--1134.
\bibitem{fra} G.K. Francis, \emph{The folded ribbon theorem: a contribution to the study of immersed circles}, Trans. Am. Math. Soc. \textbf{141} (1969), 271--303.
\bibitem{fra2} G.K. Francis, \emph{Assembling compact Riemann surfaces with given boundary curves and branch points on the sphere}, Illinois J. Math. \textbf{20} (1976), 198--217.
\bibitem{fukishi} T. Fukuda, G. Ishikawa, \emph{On the number of cusps of stable perturbations of a plane-to-plane singularity}, Tokyo J. Math. \textbf{10} (1987), 375--384.
\bibitem{hae} A. Haefliger, \emph{Quelques remarques sur les applications diff\'{e}rentiables d'une surface dans le plan}, Ann. Inst. Fourier Grenoble \textbf{10} (1960), 47--60.
\bibitem{hac} D. Hacon, C. Mendes de Jesus, M.C. Romero Fuster, \emph{Fold maps from the sphere to the plane}, Experimental Math. \textbf{15} (2006), 491--497.
\bibitem{kal} T. K\'{a}lm\'{a}n, \emph{Stable maps of surfaces into the plane}, Topology Appl. \textbf{107} (2000), 307--316.
\bibitem{lev} H.I. Levine, \emph{Elimination of cusps}, Topology \textbf{3}, Suppl. 2 (1965), 263--296.
\bibitem{whi2} H. Whitney, \emph{On regular closed curves in the plane}, Compositio Mathematica, \textbf{4} (1937), 276--284.
\bibitem{whi} H. Whitney, \emph{On singularities of mappings of euclidean spaces, I: Mappings of the plane into the plane}, Ann. of Math. (2) \textbf{62} (1955), 374--410.
\bibitem{wra2} D.J. Wrazidlo, \emph{Fold maps and positive topological quantum field theories}, Dissertation, Heidelberg (2017), \url{http://nbn-resolving.de/urn:nbn:de:bsz:16-heidok-232530}.
\bibitem{myam} M. Yamamoto, \emph{Pseudo-immersions of oriented surfaces with one boundary component into the plane}, Proceedings of the Royal Society of Edinburgh \textbf{139A} (2009), 1327--1335.
\bibitem{tyama} T. Yamamoto, \emph{Number of singularities of stable maps on surfaces}, Pacific Journal of Mathematics \textbf{280}, no. 2 (2016), 489--510.

\end{thebibliography}

\end{document}